\def\Implies{\ifmmode\Longrightarrow \else
	\unskip${}\Longrightarrow{}$\ignorespaces\fi}
\def\implies{\ifmmode\Rightarrow \else
	\unskip${}\Rightarrow{}$\ignorespaces\fi}
\def\iff{\ifmmode\Longleftrightarrow \else
	\unskip${}\Longleftrightarrow{}$\ignorespaces\fi}
\newcommand{\rank}{\mathop{\rm rank}\nolimits}
\newcommand{\numberset}{\mathbb}
\newcommand{\Z}{\numberset{Z}}
\def\NN{{\mathbb N}}
\def\ZZ{{\mathbb Z}}
\newcommand{\lt}{\mathop{\rm in}\nolimits}
\newcommand{\cC}{\mathcal{C}}
\newcommand{\cP}{\mathcal{P}}
\newcommand{\cH}{\mathcal{H}}
\newcommand{\cB}{\mathcal{B}}
\newcommand{\cF}{\mathcal{F}}
\newcommand{\cW}{\mathcal{W}}
\newcommand{\cR}{\mathcal{R}}
\newcommand{\cS}{\mathcal{S}}
\newcommand{\cV}{\mathcal{V}}
\newcommand{\rHP}{\mathrm{HP}}
\theoremstyle{thmstyleone}
\newtheorem{theorem}{Theorem}
\newtheorem{proposition}[theorem]{Proposition}
\newtheorem{corollary}[theorem]{Corollary}
\newtheorem{discussion}[theorem]{Discussion}
\newtheorem{example}[theorem]{Example}
\newtheorem{remark}[theorem]{Remark}
\theoremstyle{thmstylethree}
\newtheorem{definition}{Definition}
\newtheorem{algorithm}{Algorithm}
\begin{document}

\title[Shellable flag simplicial complexes of non-simple polyominoes]{Shellable flag simplicial complexes of non-simple polyominoes}

\author*[1]{\fnm{Francesco} \sur{Navarra}}\email{francesco.navarra@sabanciuniv.edu}

\affil*[1]{\orgdiv{Faculty of Engineering and Natural Sciences}, \orgname{Sabanci University}, \orgaddress{\street{Orta Mahalle, Tuzla}, \city{Istanbul}, \postcode{34956}, \country{Turkey}}}

\abstract{In this article, we explore the shellability of the flag simplicial complexes associated with non-simple and thin polyominoes. As a consequence, we establish the Cohen-Macaulayness and a combinatorial interpretation of the $h$-polynomial of the corresponding coordinate rings.}

\keywords{Polyominoes, binomial ideals, Cohen-Macaulay,  flag simplicial complex, shellabile, rook polynomial.}

\pacs[MSC Classification]{05B50, 05E40}

\maketitle

\section*{Introduction}\label{sec1}

 A classic topic in commutative algebra is the study of determinantal ideals, that is, ideals generated by the $t$-minors of a generic matrix whose entries are elements in a ring. These ideals serve as a bridge between algebraic geometry, combinatorial algebra, and homological algebra and their study provides crucial insights into the structure and properties of varieties and rings; see for instance \cite{v,Bruns_Herzog,E,d,h}. In the specific case of $2$-minors of a matrix having indeterminates as entries, these ideals can be seen as special cases of the so-called \textit{polyomino ideals}. If $\cP$ is a polyomino, that is, a finite collection of cells joined edge by edge, then its polyomino ideal $I_{\mathcal{P}}$ is the binomial ideal generated by the inner 2-minors of $\mathcal{P}$. This type of ideal was introduced in 2012 by Qureshi~\cite{Qureshi}. Since then, the study of the main algebraic properties of polyomino ideals and their quotient rings $K[\mathcal{P}] = S/I_{\mathcal{P}}$, in relation to the shape of $\mathcal{P}$, has emerged as an exciting area of research. The aim is to explore the fundamental algebraic properties of $I_{\mathcal{P}}$, depending on the shape of $\mathcal{P}$. Despite the considerable interest in this field, many open problems remain unsolved so far. It is intriguing to determine which polyominoes can be identified with toric varieties (\cite[Proposition 1.1.11]{Toric V}), but a complete characterization is still unknown. In this context, it can be important to refer to the works that discuss the primality of $I_{\mathcal{P}}$, like \cite{Cisto_Navarra_closed_path,Cisto_Navarra_weakly,Cisto_Navarra_CM_closed_path,def balanced,Not simple with localization,Trento,Shikama}.
  It is worth mentioning that the polyomino ideals of simple polyominoes are prime. Roughly speaking, a simple polyomino is a polyomino without holes. The methods used to prove this are particularly interesting: in \cite{Simple equivalent balanced}, the authors show that simple and balanced polyominoes are equivalent and they use the fact that a polyomino ideal, associated to a balanced one, is prime (see \cite{def balanced}); independently of this, in \cite{Simple are prime} it is showed that polyomino ideals associated to simple polyominoes are prime, by identifying their quotient ring with toric rings of a weakly chordal graph, thus obtaining that $K[\cP]$ is a normal Cohen-Macaulay domain by \cite{Ohsug-Hibi_koszul}. Nowadays, the study is applied to multiply connected polyominoes, that are polyominoes with one or more holes. A fascinating class of non-simple polyominoes, called \textit{closed paths}, is introduced in \cite{Cisto_Navarra_closed_path}, where a characterization of their primality is given in terms of zig-zag walks (see \cite[Section 3]{Trento}). A closed path is basically a path of cells where the first one and the last one coincides and the path creates just one hole. For this class of polyominoes, some results are in \cite{Cisto_Navarra_closed_path}, \cite{Cisto_Navarra_Jahangir}, \cite{Cisto_Navarra_CM_closed_path}, \cite{Cisto_Navarra_Hilbert_series} and \cite{Cisto_Navarra_Veer}.Our interest lies in understanding the Cohen-Macaulay property and the combinatorial aspects of the $h$-polynomial of non-simple polyominoes. Recently, in \cite{Cisto_Navarra_Jahangir}, it is proved that the coordinate ring of a closed path with a zig-zag walk is Cohen-Macaulay and the $h$-polynomial is equal to the rook polynomial of $\cP$. Although that proof is rigorous, it is admittedly cumbersome and highly technical. In this paper we will re-derive these results through an easier and different approach, where the theory of simplicial complexes, including the shellable property and the McMullen-Walkup Theorem (see \cite[Corollary 5.1.14]{Bruns_Herzog}), will play a crucial role.
   
 The paper is organized as follows. In Section \ref{Section: Polyominoes and polyomino ideals} we introduced some combinatorial basics on polyominoes and the definition of the coordinate ring of a polyomino. Section \ref{Section: Grobner basis} provides some preliminary results. We firstly recall the definition of closed paths and the characterization of the primality of the related polyomino ideal. Now, let $\cP$ be a closed path polyomino. We provide a suitable monomial order $\prec_{\cP}$, which is inspired by \cite[Section 2]{Dinu_Navarra_Konig}, in order that the initial ideal $\mathrm{in}(I_{\cP})$ of $I_{\cP}$ with respect to $\prec_{\cP}$ is squarefree and generated in degree two (Proposition \ref{Prop: Grobner basis}). Thus, it makes sense to consider the flag simplicial complex $\Delta(\cP)$, having $\mathrm{in}(I_{\cP})$ as Stanley-Reisner ideal. In Section \ref{Section: Shellability} we investigate the combinatorial property of $\Delta(\cP)$,  particularly its pureness and shellability.  When $I_{\cP}$ is a toric ideal, it is well-known that $\Delta(\cP)$ is shellable from \cite[Theorem 9.6.1]{Villareal}. The case when $I_{\cP}$ is non-prime, or equivalently $\cP$ has a zig-zag walk, is more challenging to study: is $\Delta(\cP)$ shellable in this case? The answer is affirmative, as shown in Theorem \ref{Thm: shelling}. The crucial parts of this result, where the reader should pay close attention, are the Discussion \ref{Discussion} and Definition \ref{Defn: lex order}, where we provide a suitable shelling order for the facets of $\Delta(\mathcal{P})$. Since every shellable simplicial complex is Cohen-Macaulay, we conclude that the coordinate ring of a closed path with a zig-zag walk is also Cohen-Macaulay (see Corollary \ref{Coro: Cohen-Macaulay}). Finally, in Section \ref{Section: Rook polynomial - final}, 
using a well-know theorem of McMullen and Walkup (see \cite[Corollary 5.1.14]{Bruns_Herzog}), we show that the class of closed paths yields a positive resolution of \cite[Conjecture 4.5]{Trento3}. Moreover, we highlight that the methodologies developed in this paper can be readily adapted to achieve analogous results for weakly closed paths (Theorem \ref{Thm: weakly}). We conclude the paper by posing several open questions (see Remark \ref{Remark: Final}).

\backmatter

	\section{Polyominoes and polyomino ideals}\label{Section: Polyominoes and polyomino ideals}

 In this section, we present various combinatorial concepts related to polyominoes and we introduce the algebras associated with them. We begin by introducing the basic concepts of polyominoes. Consider the natural partial order on $\ZZ^2$, that is, if $(i,j),(k,l)\in \Z^2$, then we say that $(i,j)\leq(k,l)$ when $i\leq k$ and $j\leq l$. Let $a=(i,j)$ and $b=(k,l)$ in $\Z^2$ with $a\leq b$. The set $[a,b]=\{(m,n)\in \Z^2: i\leq m\leq k,\ j\leq n\leq l \}$ is said to be an \textit{interval} of $\Z^2$. Moreover, if $i< k$ and $j<l$, then $[a,b]$ is a \textit{proper} interval. In such a case, $a, b$ and $c=(i,l), d=(k,j)$ are \textit{diagonal corners} and the \textit{anti-diagonal corners} of $[a,b]$, respectively. We define also $]a,b[=[a,b]\setminus\{a,b\}$, $[a,b[=[a,b]\setminus\{b\}$ and $]a,b]=[a,b]\setminus\{a\}$. If $j=l$ (resp. $i=k$), then $a$ and $b$ are in \textit{horizontal} (resp. \textit{vertical}) \textit{position}. A proper interval $C=[a,b]$ with $b=a+(1,1)$ is called a \textit{cell} of $\ZZ^2$; moreover, the elements $a$, $b$, $c$ and $d$ are said to be the \textit{lower left}, \textit{upper right}, \textit{upper left} and \textit{lower right} \textit{corners} of $C$, respectively. The set of the vertices of $C$ is $V(C)=\{a,b,c,d\}$ and the set of the edges of $C$ is $E(C)=\{\{a,c\},\{c,b\},\{b,d\},\{a,d\}\}$. More generally, if $\cS$ is a non-empty collection of cells in $\Z^2$, then $V(\cS)=\bigcup_{C\in \cS}V(C)$ and $E(\cS)=\bigcup_{C\in \cS}E(C)$. The rank of $\cS$ is the number of the cells belonging to $\cS$ and it is denoted by $\vert \cS\vert$ (some authors use $\rank(\cS)$). If $C$ and $D$ are two distinct cells of $\cS$, then a \textit{walk} from $C$ to $D$ in $\cS$ is a sequence $\cC:C=C_1,\dots,C_m=D$ of cells of $\ZZ^2$ such that $C_i \cap C_{i+1}$ is an edge of $C_i$ and $C_{i+1}$ for $i=1,\dots,m-1$. Moreover, if $C_i \neq C_j$ for all $i\neq j$, then $\cC$ is called a \textit{path} from $C$ to $D$. Denoting by $(a_i,b_i)$ the lower left corner of $C_i$ for all $i=1,\dots,m$, we say that $\cC$ has a \textit{change of direction} at $C_k$ for some $2\leq k \leq m-1$ if $a_{k-1} \neq a_{k+1}$ and $b_{k-1} \neq b_{k+1}$. We say that two cells $C$ and $D$ of $\cS$ are \textit{connected} in $\cS$ if there exists a path of cells belonging to $\cS$ from $C$ to $D$. Now, we can give the formal definition of a polyomino. A \textit{polyomino} $\cP$ is a non-empty, finite collection of cells in $\Z^2$ where any two cells of $\cP$ are connected in $\cP$. For instance, see Figure \ref{Figure: Polyomino introduction}.

\begin{figure}[h]
	\centering
	\subfloat[Non-simple]{\includegraphics[scale=0.6]{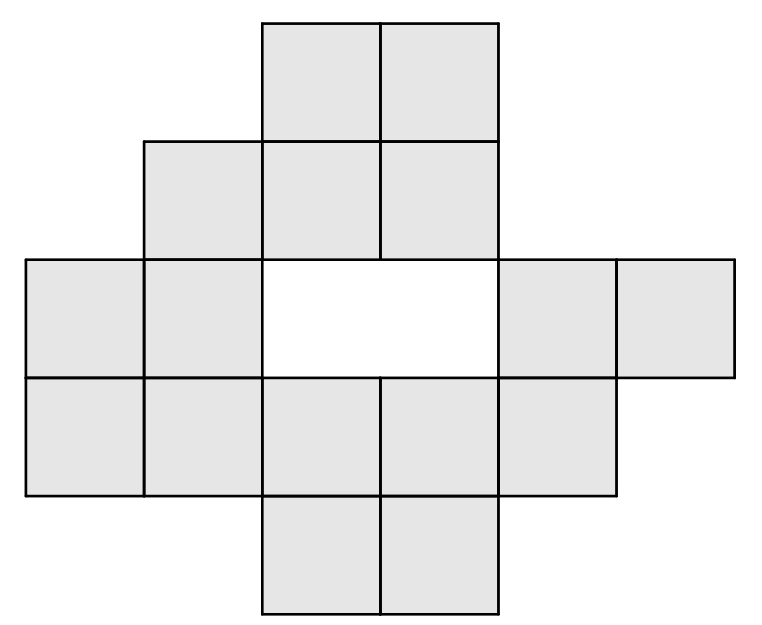}}\qquad
	\subfloat[Simple thin]{\includegraphics[scale=0.6]{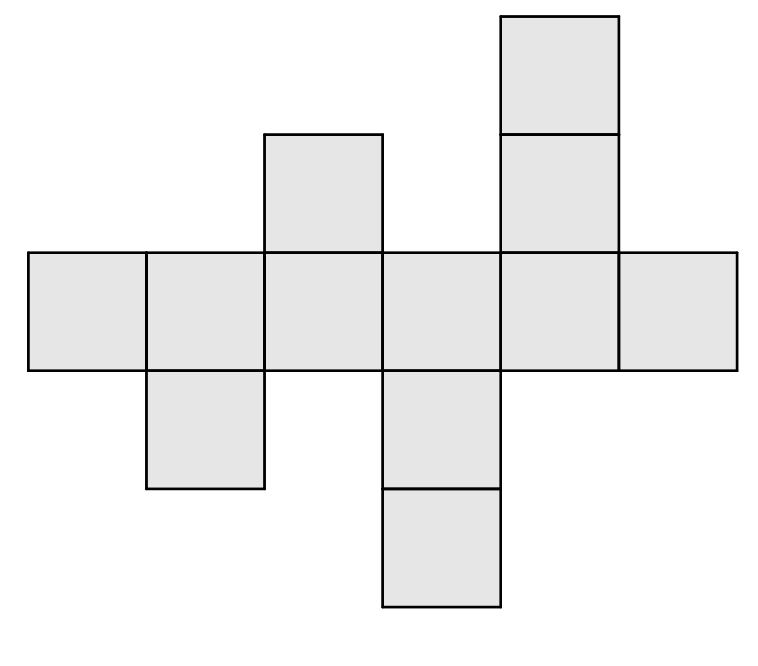}}
	\caption{Two polyominoes.}
	\label{Figure: Polyomino introduction}
\end{figure}

 Let $\cP$ be a polyomino. A \textit{sub-polyomino} of $\cP$ is a polyomino consisting of cells which belong to $\cP$. We define that $\cP$ is \textit{simple} if for any two cells $C$ and $D$ not in $\cP$ there exists a path of cells, which do not belong to $\cP$, from $C$ to $D$; look at Figure \ref{Figure: Polyomino introduction} (b) for an example of simple polyomino. A finite collection $\cH$ of cells not belonging to $\cP$ is a \textit{hole} of $\cP$ if any two cells of $\cH$ are connected in $\cH$ and $\cH$ is maximal with respect to set inclusion. For instance, the polyomino in Figure \ref{Figure: Polyomino introduction} (a) is not simple and it has only one hole which consists of two cells. Observe that every hole of $\cP$ is a simple polyomino and $\cP$ is simple if and only if $\cP$ has no hole. We say that $\cP$ is \textit{thin} if it does not contain the square tetromino, which is a square obtained as a union of four distinct cells; for example, in Figure \ref{Figure: Polyomino introduction} (b) we illustrate a simple thin polyomino. Consider two cells $A$ and $B$ of $\Z^2$ with $a=(i,j)$ and $b=(k,l)$ as the lower left corners of $A$ and $B$ with $a\leq b$. A \textit{cell interval} $[A,B]$ is the set of the cells of $\Z^2$ with lower left corner $(r,s)$ such that $i\leqslant r\leqslant k$ and $j\leqslant s\leqslant l$. If $(i,j)$ and $(k,l)$ are in horizontal (or vertical) position, we say that the cells $A$ and $B$ are in \textit{horizontal} (or \textit{vertical}) \textit{position}.\\
Let $\cP$ be a polyomino. Consider  two cells $A$ and $B$ of $\cP$ in vertical or horizontal position. 	 
A cell interval $\cB$ is called a
\textit{block of $\cP$ of rank n} if it has $n$ cells and every cell of $\cB$ belongs to $\cP$.  Moreover, a block $\cB$ of $\cP$ is \textit{maximal} if there does not exist any block of $\cP$ which properly contains $\cB$. Now, observe that if $[a,b]$ is a proper interval of $\ZZ^2$, then all the cells of $[a,b]$ identify a cell interval of $\ZZ^2$ and vice versa, that is, if $[A,B]$ is a cell interval of $\ZZ^2$ then $V([A,B])$ is a interval of $\ZZ^2$; consequently, we can associated to an interval $I$ of $\ZZ^2$ the corresponding cell interval, denoted by $\cP_{I}$. A proper interval $[a,b]$ is called an \textit{inner interval} of $\cP$ if all cells of $\cP_{[a,b]}$ belong to $\cP$. An interval $[a,b]$ with $a=(i,j)$, $b=(k,j)$ and $i<k$ is called a \textit{horizontal edge interval} of $\cP$ if the sets $\{(\ell,j),(\ell+1,j)\}$ are edges of cells of $\cP$ for all $\ell=i,\dots,k-1$. In addition, if $\{(i-1,j),(i,j)\}$ and $\{(k,j),(k+1,j)\}$ do not belong to $E(\cP)$, then $[a,b]$ is called a \textit{maximal} horizontal edge interval of $\cP$. One can similarly define a \textit{vertical edge interval} and a \textit{maximal} vertical edge interval. Following \cite{Trento}, we recall the definition of a \textit{zig-zag walk} of $\cP$. It is defined as a sequence $\cW:I_1,\dots,I_\ell$ of distinct inner intervals of $\cP$ where, for all $i=1,\dots,\ell$, the interval $I_i$ has either diagonal corners $v_i$, $z_i$ and anti-diagonal corners $u_i$, $v_{i+1}$, or anti-diagonal corners $v_i$, $z_i$ and diagonal corners $u_i$, $v_{i+1}$, such that (1) $I_1\cap I_\ell=\{v_1=v_{\ell+1}\}$ and $I_i\cap I_{i+1}=\{v_{i+1}\}$, for all $i=1,\dots,\ell-1$, (2) $v_i$ and $v_{i+1}$ are on the same edge interval of $\cP$, for all $i=1,\dots,\ell$, (3) for all $i,j\in \{1,\dots,\ell\}$ with $i\neq j$, there exists no inner interval $J$ of $\cP$ such that $z_i$, $z_j$ belong to $J$. Note that the polyomino in Figure \ref{Figure: Polyomino introduction} (a) has a zig-zag walk.\\
We conclude defining the $K$-algebra associated with a polyomino, as established by Qureshi in \cite{Qureshi}. Let $\cP$ be a polyomino and  $S_\cP=K[x_v| v\in V(\cP)]$ be the polynomial ring of $\cP$ where $K$ is a field. If $[a,b]$ is an inner interval of $\cP$, with $a$,$b$ and $c$,$d$ respectively diagonal and anti-diagonal corners, then the binomial $x_ax_b-x_cx_d$ is called an \textit{inner 2-minor} of $\cP$. The ideal $I_{\cP}$, known as the \textit{polyomino ideal} of $\cP$, is defined as the ideal in $S_\cP$ generated by all the inner 2-minors of $\cP$. We set $K[\cP] = S_\cP/I_{\cP}$, which is the \textit{coordinate ring} of $\cP$.

\section{Closed path polyominoes and reduced quadratic Gr\"obner basis} \label{Section: Grobner basis}

	In this section, we examine the reduced (quadratic) Gröbner basis of the polyomino ideal associated with a so-called closed path polyomino. In accordance with \cite{Cisto_Navarra_closed_path}, we begin by recalling its definition. We say that a polyomino $\cP$ is a \textit{closed path} if there exists a sequence of cells $A_1,\dots,A_n, A_{n+1}$, where $n>5$, such that:
	\begin{enumerate}
		\item $A_1=A_{n+1}$;
		\item $A_i\cap A_{i+1}$ is a common edge of $A_i$ and $A_{i+1}$, for all $i=1,\dots,n$;
		\item $A_i\neq A_j$, for all $i\neq j$ and $i,j\in \{1,\dots,n\}$;
		\item for all $i\in\{1,\dots,n\}$ and for all $j\notin\{i-2,i-1,i,i+1,i+2\}$, we have $V(A_i)\cap V(A_j)=\emptyset$, where $A_{-1}=A_{n-1}$, $A_0=A_n$, $A_{n+1}=A_1$ and $A_{n+2}=A_2$. 
	\end{enumerate}
	
	\begin{figure}[h]
		\centering
		\subfloat[A closed path without zig-zag walks]{\includegraphics[scale=0.5]{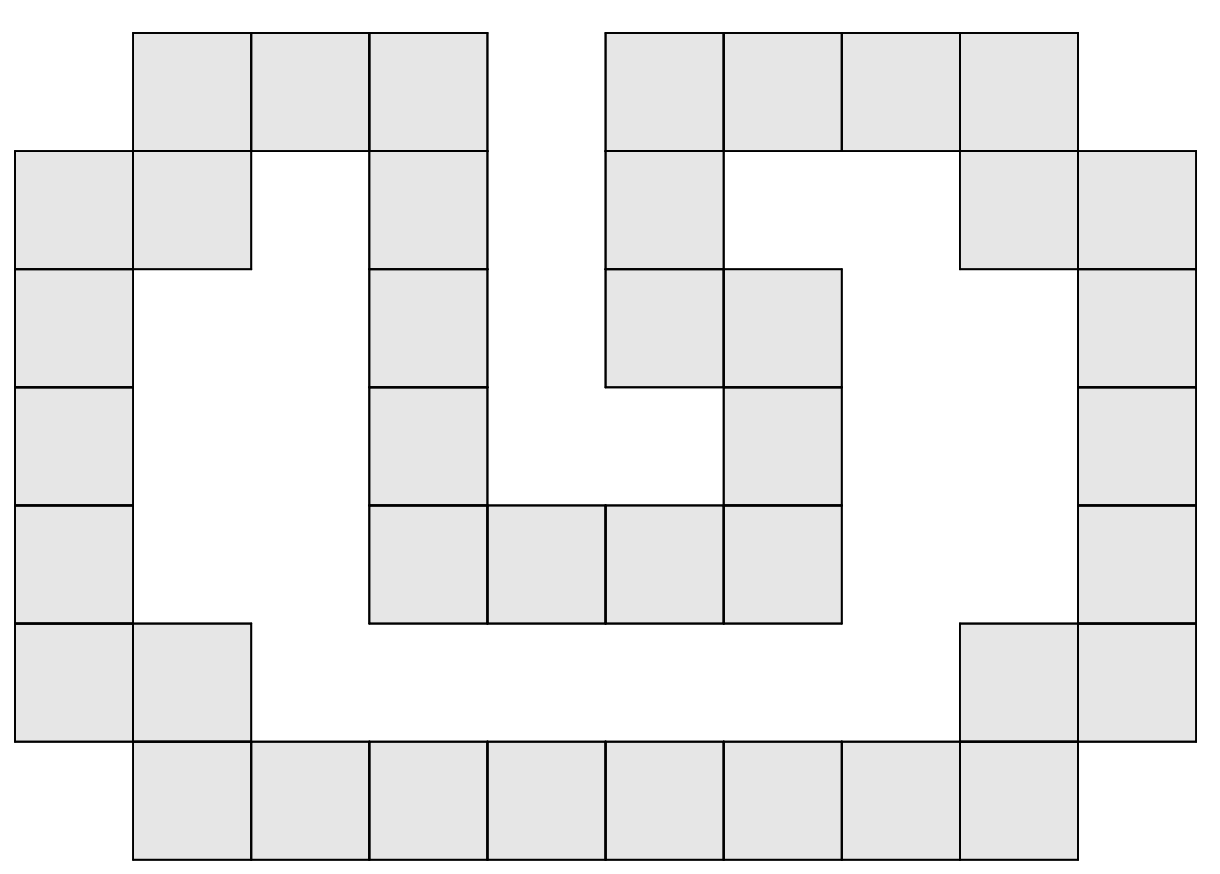}}\qquad\qquad
		\subfloat[A closed path with zig-zag walks]{\includegraphics[scale=0.5]{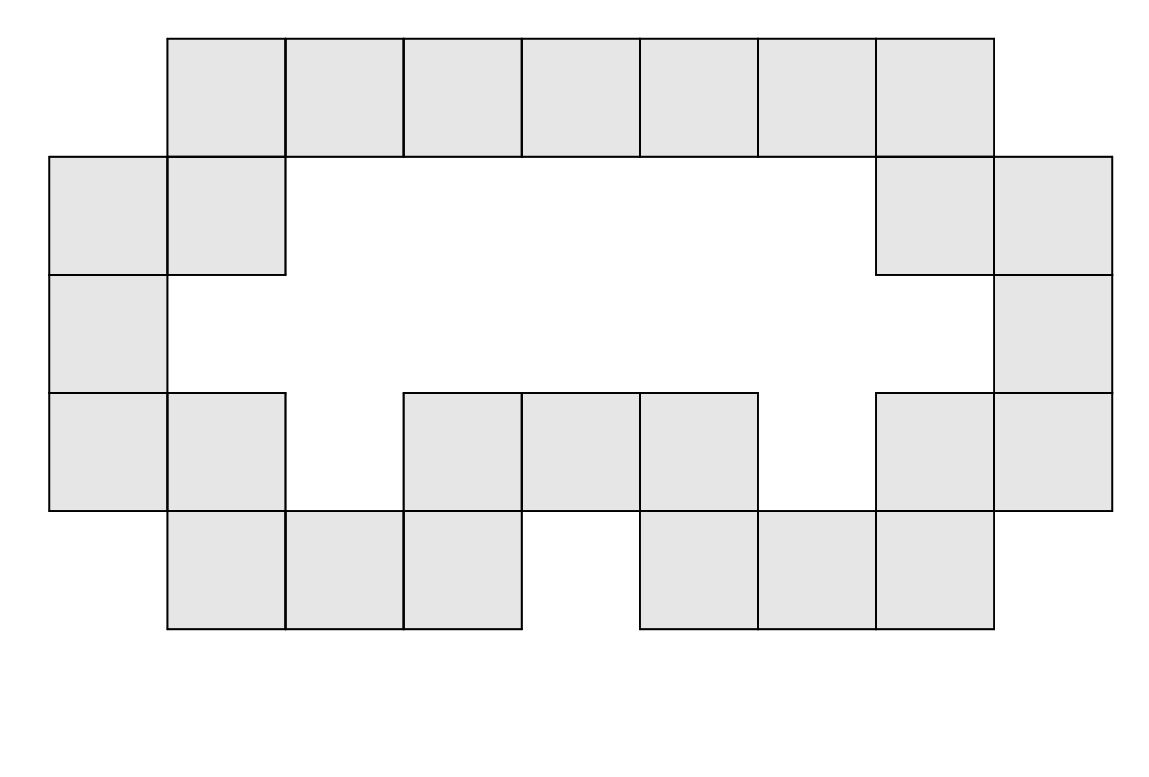}}
		\caption{An example of two closed paths.}
		\label{Figure: Example closed paths}
	\end{figure}
	
	We now present the configurations of cells that characterize their primality, specifically, an \textit{L-configuration} and a \textit{ladder of n steps}. A path of five cells $C_1, C_2, C_3, C_4, C_5$ of $\cP$ is called an \textit{L-configuration} if the two sequences $C_1, C_2, C_3$ and $C_3, C_4, C_5$ go in two orthogonal directions. A set $\cB=\{\cB_i\}_{i=1,\dots,n}$ of maximal horizontal (or vertical) blocks of rank at least two, with $V(\cB_i)\cap V(\cB_{i+1})=\{a_i,b_i\}$ and $a_i\neq b_i$ for all $i=1,\dots,n-1$, is called a \textit{ladder of $n$ steps} if $[a_i,b_i]$ is not on the same edge interval of $[a_{i+1},b_{i+1}]$ for all $i=1,\dots,n-2$. We recall that a closed path has no zig-zag walks if and only if it contains an $L$-configuration or a ladder of at least three steps (see \cite[Section 6]{Cisto_Navarra_closed_path}). For instance, in Figure \ref{Figure: Example closed paths}, the left side presents a closed path whose polyomino ideal is prime (that is, it does not contain zig-zag walks), while the right side illustrates a closed path having zig-zag walks. Finally, as easily proved in \cite[Lemma 2.1]{Dinu_Navarra_Konig}, if $\cP$ is a closed path polyomino then $|V(\cP)|=2\vert \cP\vert$.\\

	We now introduce the total order on $\{x_v:v\in V(\cP)\}$, where $\cP$ is a closed path, defined in \cite[Section 2]{Dinu_Navarra_Konig},  but with some slight generalizations.\\
	
	\begin{algorithm}\rm \label{Algorithm: to define Y}
	Let $\cP:A_1,\dots,A_n$ be a closed path polyomino. We want to label the vertices of $\cP$ in a suitable manner.
	
	\textit{First step.} Let us start to define a vertex set $Y^{(1)}\subset V(\cP)$ which will serve as the starting point of our algorithm. To accomplish this, we consider two distinct cases.
	
	\begin{enumerate}[(1)]
		\item Suppose that $\cP$ contains a configuration of four cells as in Figure \ref*{Figure: particular tetromino} (a), up to reflections or rotations of $\cP$. We consider the following two cases. 
		\begin{enumerate}
		\item Assume $A_3$ is at North of $A_2$. Then we set $Y^{(1)}=Y^{(1)}_1 \sqcup Y^{(1)}_2$ where $Y^{(1)}_1=\{1,2\}$ and $Y^{(1)}_2=\{1',2'\}$, with reference to Figure~\ref{Figure: particular tetromino} (a) if $A_4$ is at East of $A_3$ or to Figure~ \ref{Figure: particular tetromino} (b) if $A_4$ is at North of $A_3$.
		\item If $A_3$ is at East of $A_2$, then we put $Y^{(1)}=Y^{(1)}_1 \sqcup Y^{(1)}_2$ where $Y^{(1)}_1=\{1\}$ and $Y^{(1)}_2=\{1'\}$, with reference to Figure~ \ref{Figure: particular tetromino} (c). 
		\begin{figure}[h]
			\centering			
			\subfloat[]{\includegraphics[scale=0.5]{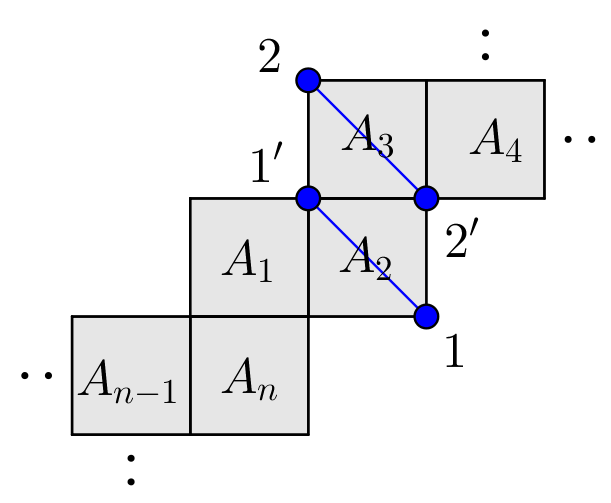}}\qquad
			\subfloat[]{\includegraphics[scale=0.5]{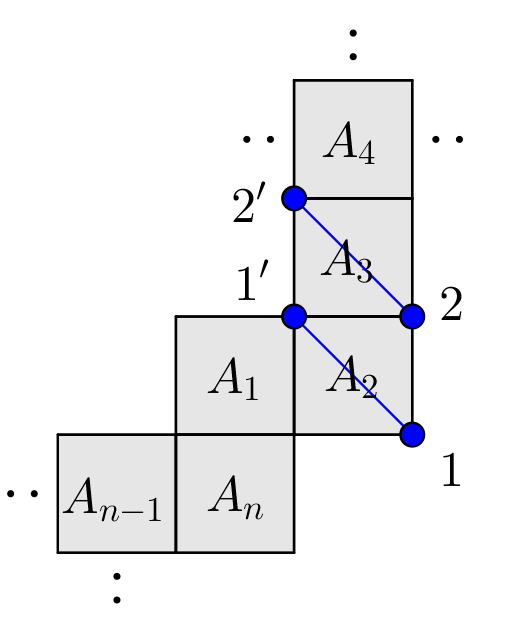}}\qquad
			\subfloat[]{\includegraphics[scale=0.5]{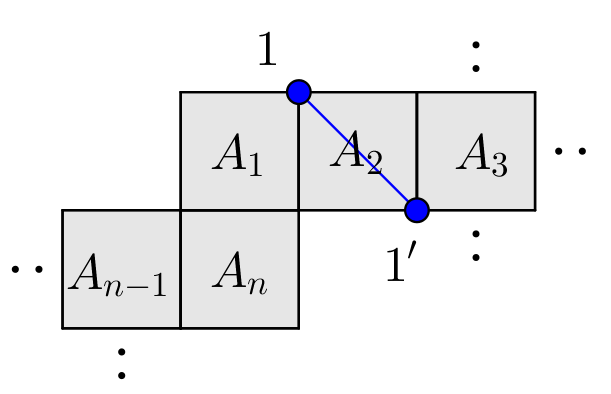}}
			\caption{\textit{First step}-(1)}
			\label{Figure: particular tetromino}
		\end{figure}
		\end{enumerate}
	
		\item Suppose that $\cP$ has an $L$-configuration in every change of direction and consider such an $L$-configuration as depicted in Figure~\ref{Figure: L configuration first step} (a), up to relabelling of the cells of $\cP$.
		\begin{enumerate}
		\item If $A_4$ is at North of $A_3$, then we set $Y^{(1)}=Y^{(1)}_1 \sqcup Y^{(1)}_2$ where $Y^{(1)}_1=\{1,2\}$ and $Y^{(1)}_2=\{1',2'\}$, referring to Figure~\ref{Figure: L configuration first step} (a). 
		\item If $A_4$ is at East of $A_3$, then we set $Y^{(1)}=Y^{(1)}_1 \sqcup Y^{(1)}_2$ where $Y^{(1)}_1=\{3\}$ and $Y^{(1)}_2=\{3'\}$, with reference to Figure~\ref{Figure: L configuration first step} (b).
		\item If $A_4$ is at South of $A_3$, then we put $Y^{(1)}=Y^{(1)}_1 \sqcup Y^{(1)}_2$ where $Y^{(1)}_1=\{1\}$ and $Y^{(1)}_2=\{1'\}$, with reference to Figure \ref{Figure: L configuration first step} (c).

			\begin{figure}[h]
			\centering
			\subfloat[]{\includegraphics[scale=0.5]{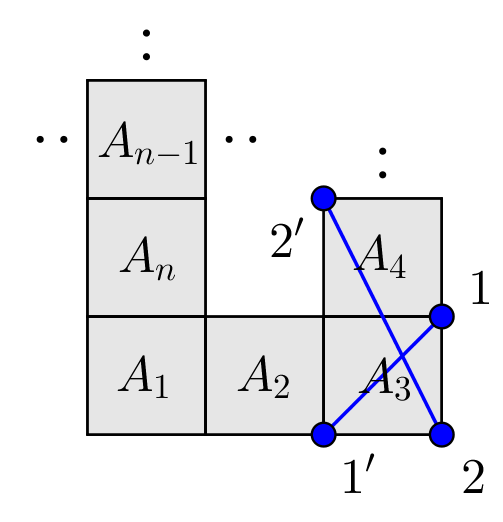}}\qquad
			\subfloat[]{\includegraphics[scale=0.5]{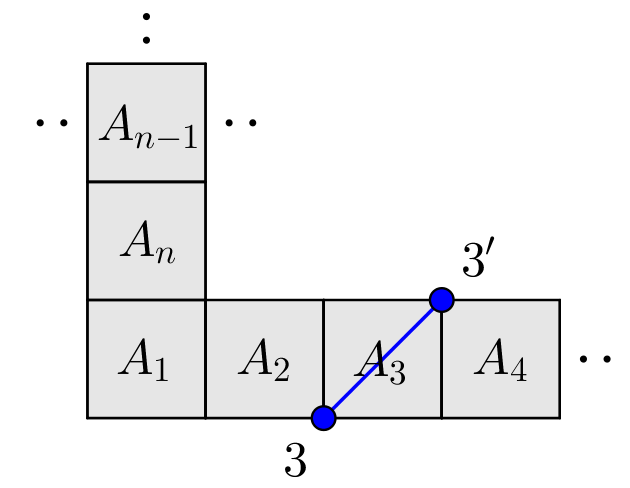}}\qquad
			\subfloat[]{\includegraphics[scale=0.5]{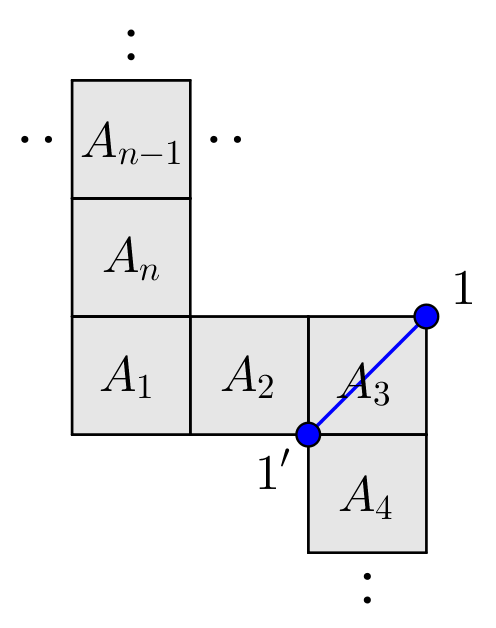}}		
			\caption{\textit{First step}-(2)}
			\label{Figure: L configuration first step}
		\end{figure}
		\end{enumerate} 
	\end{enumerate}
	
	 \textit{Halfway steps.} Now, let $j\geq 2$ and assume that $Y^{(j-1)}$ is known. We want to define $Y^{(j)}$. We refer to Table~\ref{Table2} up to suitable rotations and reflections of $\cP$. If one of the configurations in the left column of Table~\ref{Table2} occurs, where the blue vertices are in $Y^{(j-1)}$, then we denote by $k$ the maximum integer such that $m+k$ is an orange vertex in the picture displayed in the corresponding right column. Hence we define $Y^{(j)}=Y^{(j)}_1\sqcup Y^{(j)}_2$ where $Y^{(j)}_1=Y^{(j-1)}_1\sqcup \{m,\dots,m+k\}$ and $Y^{(j)}_2=Y^{(j-1)}_2\sqcup \{m',\dots,(m+k)'\}$.

\begin{table}[h]
	\centering
	\begin{minipage}{0.43\textwidth}
		\centering
		\renewcommand\arraystretch{0.9}
		\begin{tabular}{c|c|c}
			& \textbf{IF} it occurs ... & \textbf{THEN} we refer to ... \\
			\hline
			I & \begin{minipage}{0.32\textwidth}
				\includegraphics[scale=0.49]{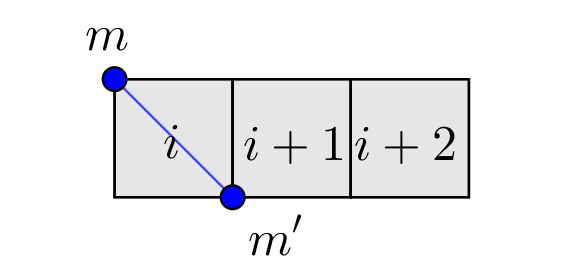}
			\end{minipage} & \begin{minipage}{0.32\textwidth}
				\includegraphics[scale=0.49]{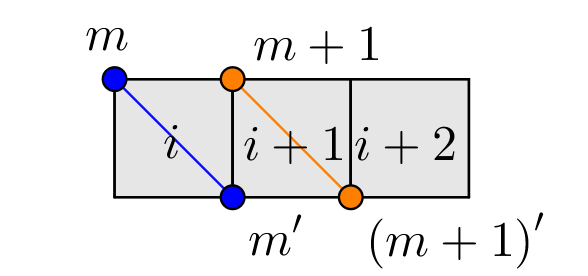}
			\end{minipage} \\
			\hline
			II & \begin{minipage}{0.32\textwidth}
				\includegraphics[scale=0.49]{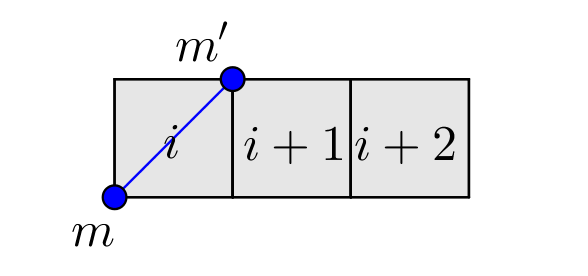}
			\end{minipage} & \begin{minipage}{0.32\textwidth}
				\includegraphics[scale=0.49]{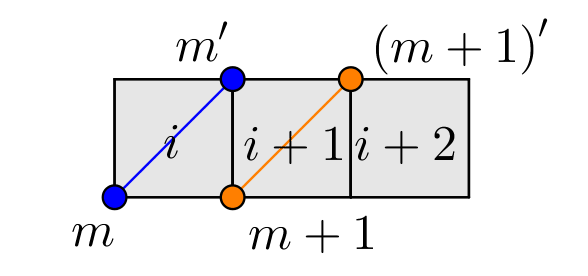}
			\end{minipage} \\
			\hline
			III & \begin{minipage}{0.32\textwidth}
				\includegraphics[scale=0.49]{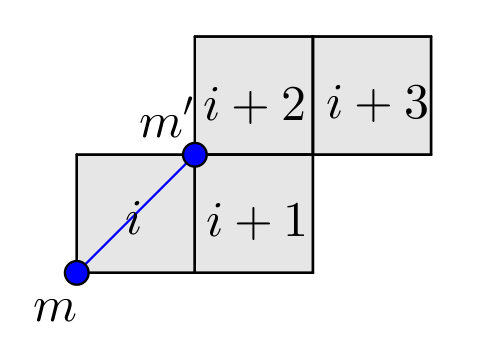}
			\end{minipage} & \begin{minipage}{0.32\textwidth}
				\includegraphics[scale=0.49]{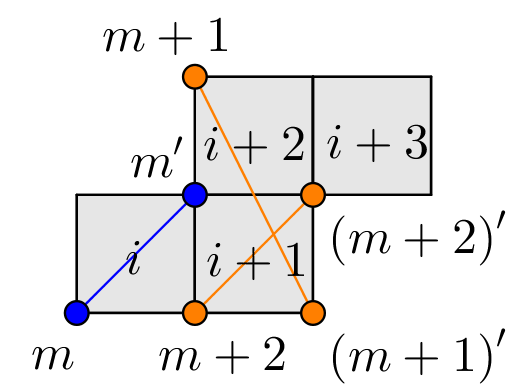}
			\end{minipage} \\
			\hline
			IV & \begin{minipage}{0.32\textwidth}
				\includegraphics[scale=0.49]{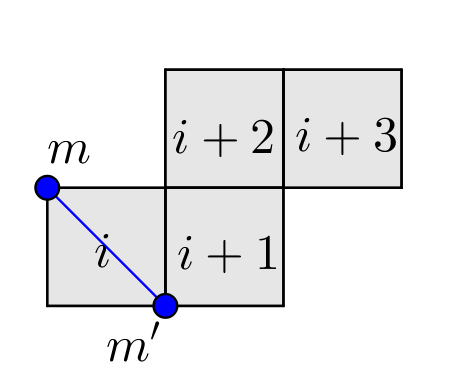}
			\end{minipage} & \begin{minipage}{0.32\textwidth}
				\includegraphics[scale=0.49]{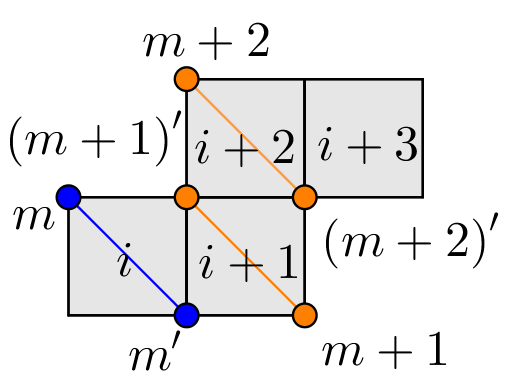}
			\end{minipage} \\
			\hline
			V & \begin{minipage}{0.32\textwidth}
				\includegraphics[scale=0.49]{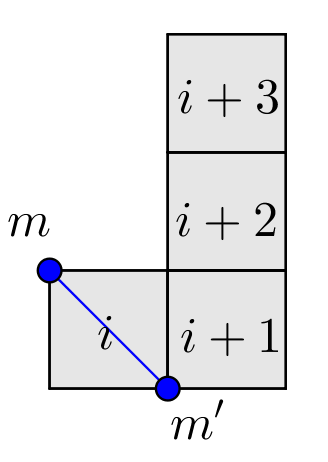}
			\end{minipage} & \begin{minipage}{0.32\textwidth}
				\includegraphics[scale=0.49]{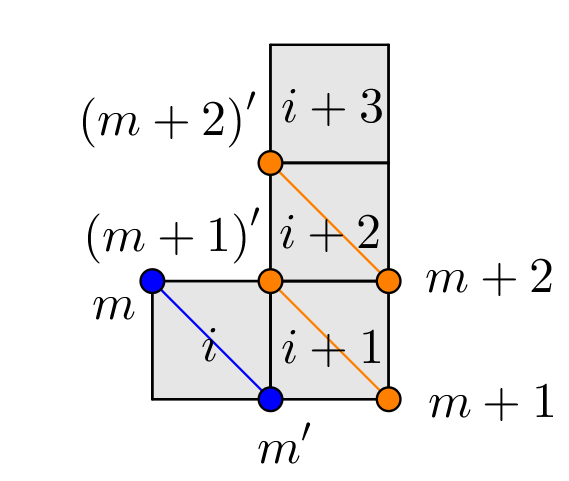}
			\end{minipage} \\
			\hline
			VI & \begin{minipage}{0.32\textwidth}
				\includegraphics[scale=0.49]{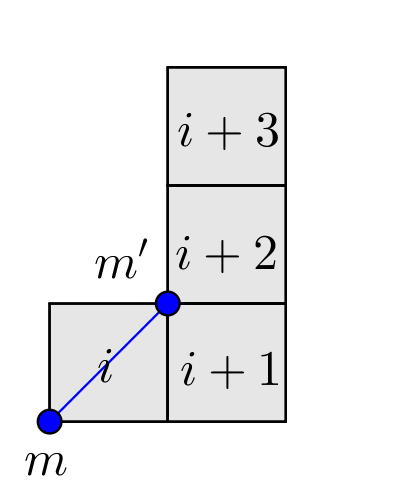}
			\end{minipage} & \begin{minipage}{0.32\textwidth}
				\includegraphics[scale=0.49]{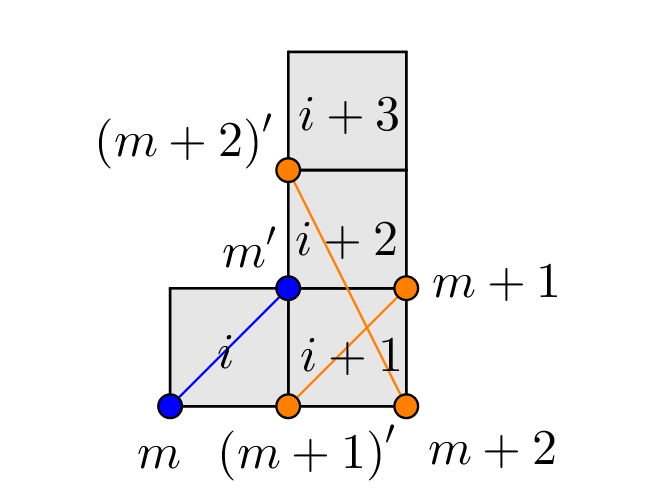}
			\end{minipage} 
		\end{tabular}
	\end{minipage}
	\hfill
	\begin{minipage}{0.43\textwidth}
		\centering
		\renewcommand\arraystretch{0.9}
		\begin{tabular}{c|c|c}
			& \textbf{IF} it occurs ... & \textbf{THEN} we refer to ... \\
			\hline
			VII & \begin{minipage}{0.34\textwidth}
				\includegraphics[scale=0.49]{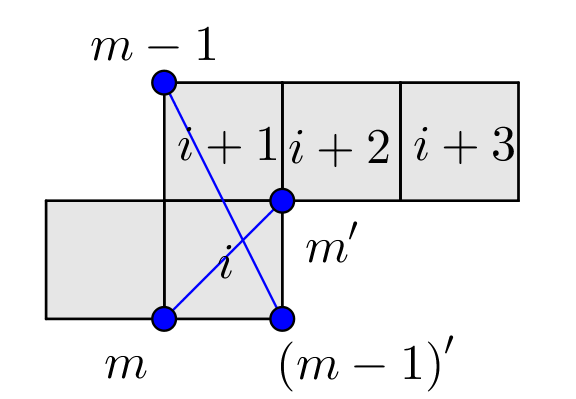}
			\end{minipage} & \begin{minipage}{0.32\textwidth}
				\includegraphics[scale=0.49]{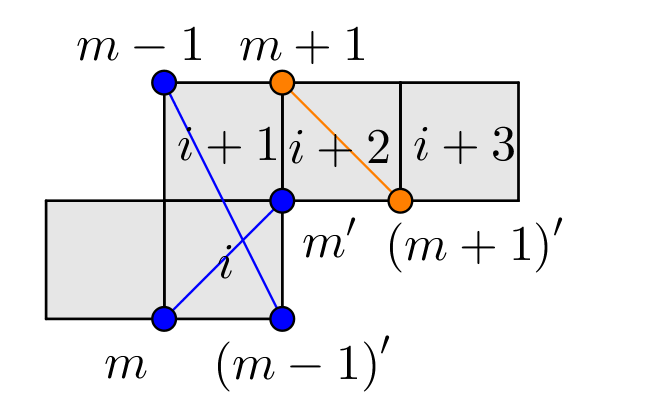}
			\end{minipage} \\
			\hline
			VIII & \begin{minipage}{0.32\textwidth}
				\includegraphics[scale=0.49]{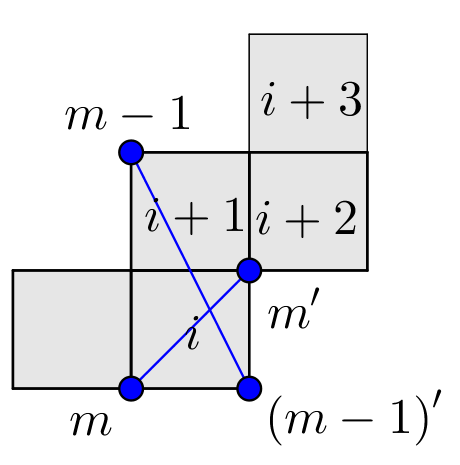}
			\end{minipage} & \begin{minipage}{0.32\textwidth}
				\includegraphics[scale=0.49]{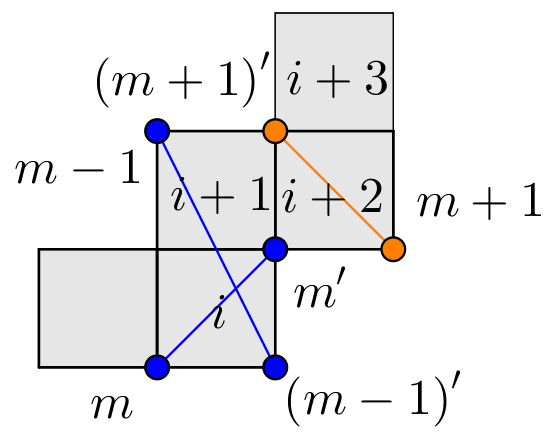}
			\end{minipage} \\
			\hline
			IX & \begin{minipage}{0.32\textwidth}
				\includegraphics[scale=0.49]{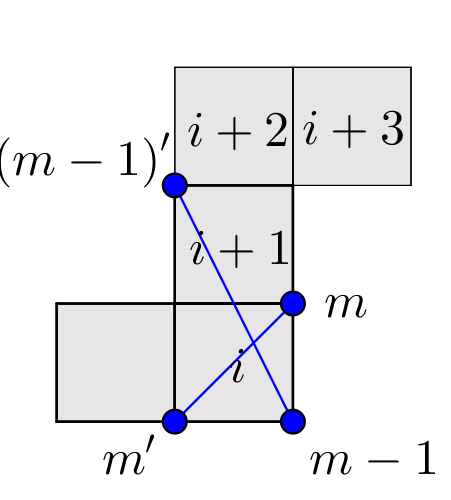}
			\end{minipage} & \begin{minipage}{0.32\textwidth}
				\includegraphics[scale=0.49]{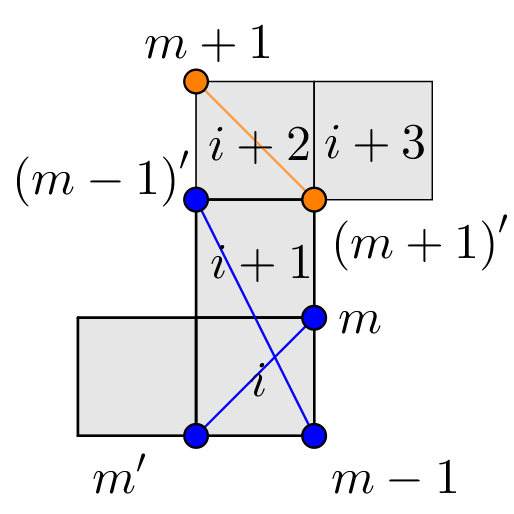}
			\end{minipage} \\
			\hline
			X & \begin{minipage}{0.20\textwidth}
				\includegraphics[scale=0.49]{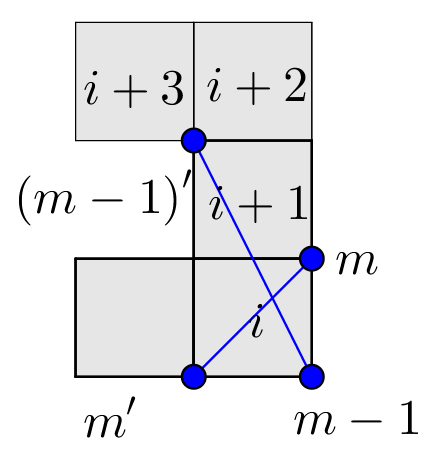}
			\end{minipage} & \begin{minipage}{0.32\textwidth}
				\includegraphics[scale=0.49]{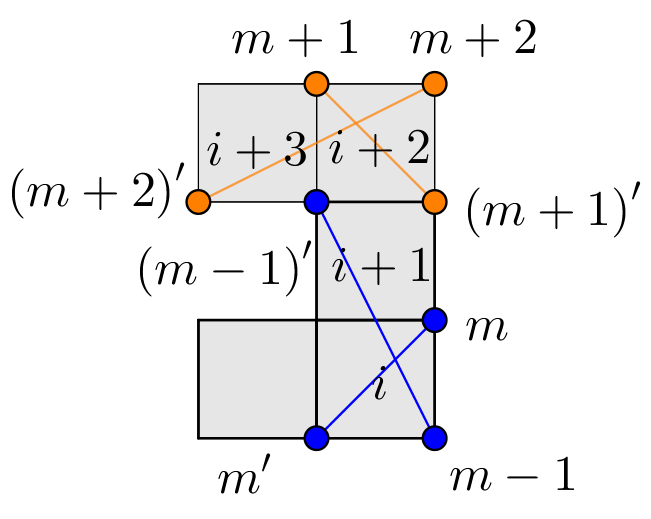}
			\end{minipage} \\
			\hline
		\end{tabular}
	\end{minipage}
	\caption{Table for labelling the vertices of a closed path.}
	\label{Table2}
\end{table}

\textit{Final step.} Let us say that $Y^{(p)}$ gives the labels of the vertices at the penultimate step of the previous procedure, just before covering all vertices of $\{A_{n-1}, A_n, A_1, A_2\}$ in pairs. We now describe the last step to obtain the set $Y$ of the labels of all the vertices of $\cP$.

	\begin{enumerate}
	\item If we start our procedure as in \textit{First step}-(1) and $A_3$ is at North of $A_2$, then we have $Y=Y_1\sqcup Y_2$ where either $Y_1=Y_1^{(p)}\sqcup \{n\}$ and $Y_2=Y_2^{(p)}\sqcup \{n\}$ with reference to Figure \ref{Figure: tetromino in the last part CASE 1} (a) or $Y_1=Y_1^{(p)}\sqcup \{n-1,n\}$ and $Y_2=Y_2^{(p)}\sqcup \{(n-1)',n'\}$ with reference to Figure \ref{Figure: tetromino in the last part CASE 1} (b). Otherwise, if $A_3$ is at East of $A_2$, then either $Y_1=Y_1^{(p)}\sqcup \{0\}$ and $Y_2=Y_2^{(p)}\sqcup \{0'\}$ referring to Figure \ref{Figure: tetromino in the last part CASE 1} (c), or $Y_1=Y_1^{(p)}\sqcup \{n,0\}$ and $Y_2=Y_2^{(p)}\sqcup \{n',0'\}$ as in Figures \ref{Figure: tetromino in the last part CASE 1} (d)-(e).
	
	\begin{figure}[h]
		\centering
		\subfloat[$a\in Y_2^{(p)}$]{\includegraphics[scale=0.5]{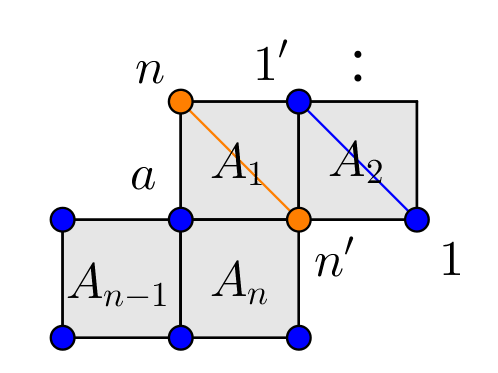}}
		\subfloat[$a\in Y_2^{(p)}$]{\includegraphics[scale=0.5]{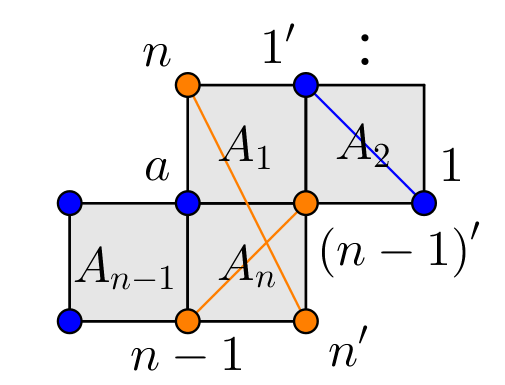}}
		\subfloat[]{\includegraphics[scale=0.5]{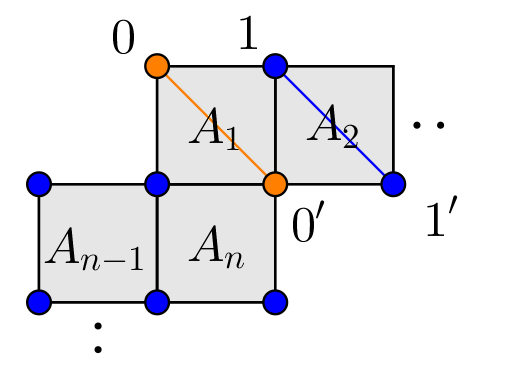}}
		\subfloat[$a\in Y_2^{(p)}$]{\includegraphics[scale=0.5]{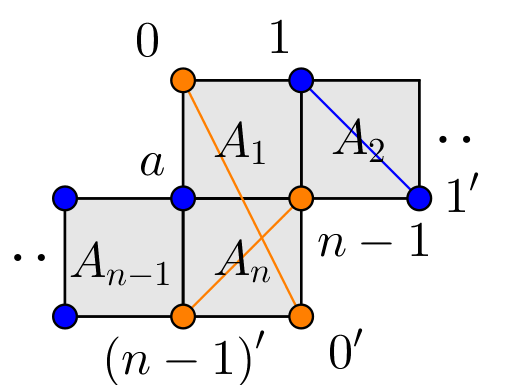}}
		\subfloat[$b\in Y_2^{(p)}$]{\includegraphics[scale=0.5]{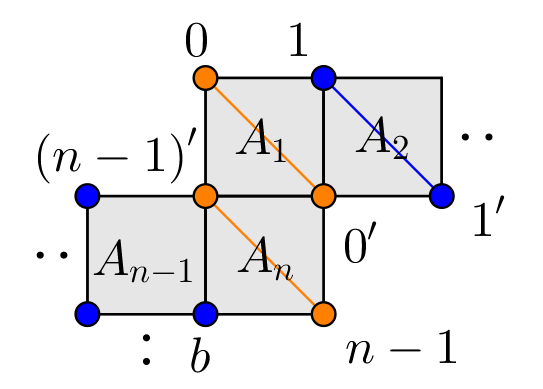}}
		\caption{Last step coming from \textit{First step}-(1)}
		\label{Figure: tetromino in the last part CASE 1}
	\end{figure}
	
	\item If our procedure starts as in \textit{First step}-(2) and $A_4$ is at North or South of $A_3$, then we have $Y=Y_1\sqcup Y_2$ where $Y_1=Y_1^{(p)}\sqcup \{n-1,n\}$ and $Y_2=Y_2^{(p)}\sqcup \{(n-1)',n'\}$ with reference to Figures \ref{Figure: L configuration A4 at north of A3} (a)-(b)-(c)-(d). In the other case, if $A_4$ is at East of $A_3$, then $Y_1=Y_1^{(p)}\sqcup \{1,2\}$ and $Y_2=Y_2^{(p)}\sqcup \{1',2'\}$ referring to Figure \ref{Figure: L configuration A4 at north of A3} (e)-(f).

\begin{figure}[h]
	\subfloat[$a \in Y_2^{(p)}$]{\includegraphics[scale=0.49]{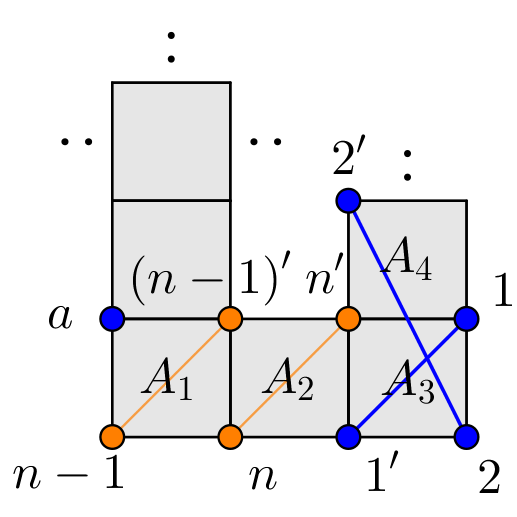}}\
	\subfloat[$b\in Y_2^{(p)}$]{\includegraphics[scale=0.49]{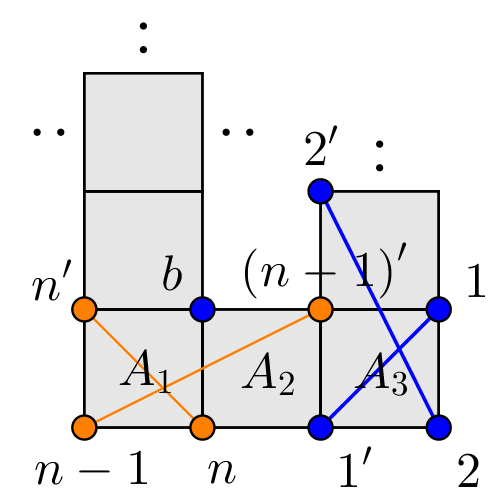}}\
	\subfloat[$a\in Y_2^{(p)}$]{\includegraphics[scale=0.49]{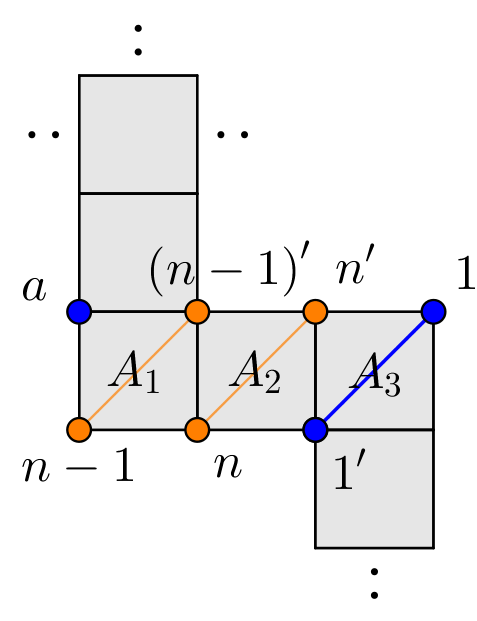}}\
	\subfloat[$b\in Y_2^{(p)}$]{\includegraphics[scale=0.49]{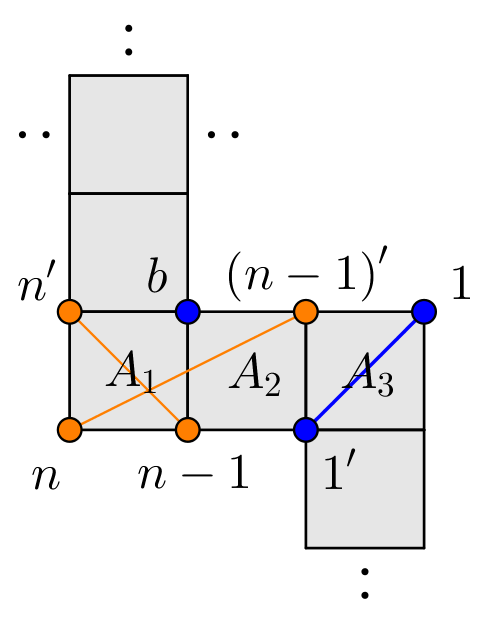}}
	\subfloat[]{\includegraphics[scale=0.49]{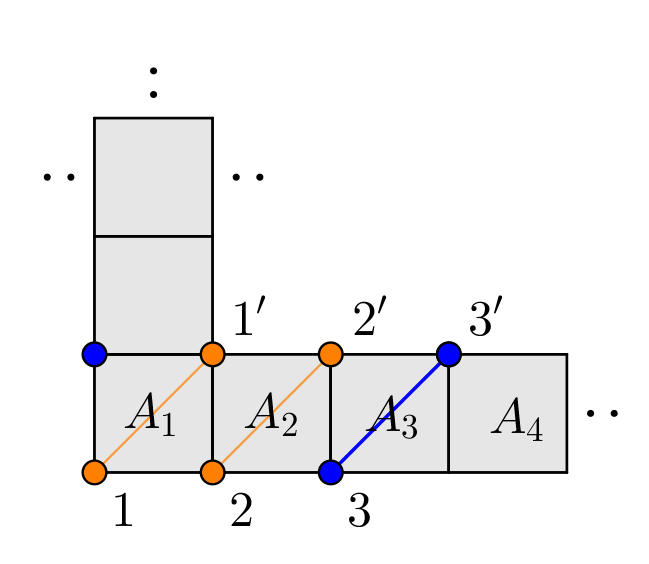}}
	\subfloat[]{\includegraphics[scale=0.49]{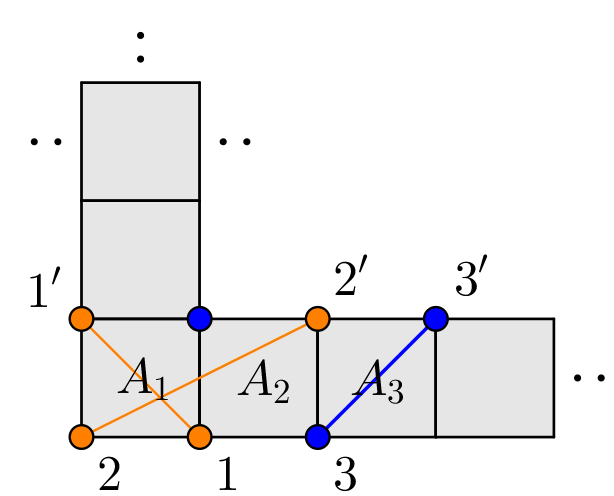}}
	\caption{Last step coming from \textit{First step}-(2)}
	\label{Figure: L configuration A4 at north of A3}
\end{figure}
\end{enumerate}

\end{algorithm}

\begin{definition}\rm\label{Defn: lex order}
	Let $\cP$ be a closed path polyomino and $Y=Y_1\sqcup Y_2$ be the set of the labels given by Algorithm \ref{Algorithm: to define Y}. We define the following total order on $\{x_v:v\in V(\cP)\}$. Denote by $<_{Y_2}$ an arbitrary total order on $Y_2$. Let $i,j\in Y$.	
	\[
	x_i <_\cP x_j  \Longleftrightarrow \left \lbrace \begin{array}{l}
		i \in Y_2 \text{ and } j\in Y_1\\
		i,j\in Y_1 \text{ and } i<j \\
		i,j \in Y_2 \text{ and } i<_{Y_2} j
	\end{array}
	\right.
	\]
	
	\noindent Set by $\prec_{\cP}$ the lexicographic order on $S_{\cP}$ induced by the total order $<_{\cP}$. \\
\end{definition}

\begin{example}\rm
	 In Figure~\ref{Figure: example closed path of konig type 2}, we illustrate two examples of labelling the vertices of a closed path, as described in Algorithm~\ref{Algorithm: to define Y}. In particular, the lexicographic orders $\prec_{\cP_1}$ and $\prec_{\cP_2}$ induced by $x_1>x_2>\dots>x_{16}>x_{1'}>x_{2'}>\dots>x_{16'}$ for $\cP_1$ and $x_1>x_2>\dots>x_{10}>x_{10'}>x_{9'}>\dots>x_{1'}$ for $\cP_2$ are examples among the many provided in Definition \ref{Defn: lex order}. Moreover, as an application of the Buchberger's criterion (see \cite[Section 2.4]{EHGrobner}), one can easily verify that the set of the generators of $I_{\cP_1}$ and $I_{\cP_2}$ forms the reduced Gr\"obner basis with respect to $\prec_{\cP_1}$ and $\prec_{\cP_2}$, respectively.  The following proposition demonstrates this for all closed paths.
	
	 \begin{figure}[h!]
		 	\centering
		 	\subfloat[Closed path $\cP_1$ ($n=16$)]{\includegraphics[scale=0.65]{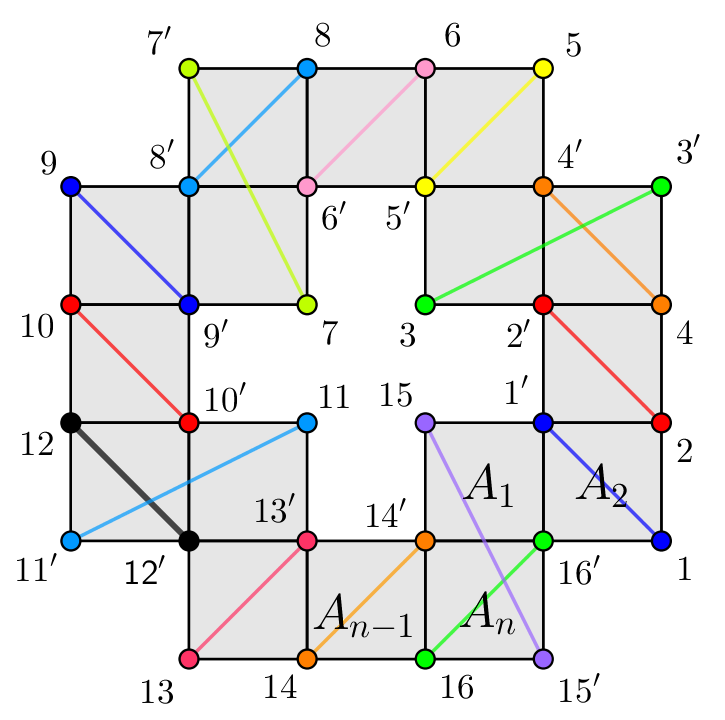}}
		 	\subfloat[Closed path $\cP_2$]{\includegraphics[scale=0.65]{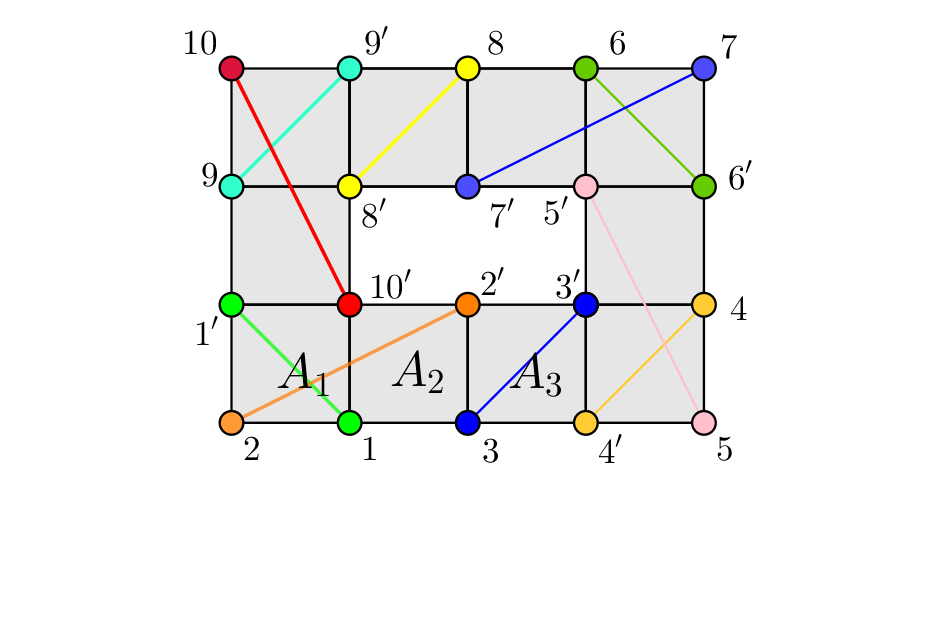}}
		 	\caption{Two closed paths and the related labels of the vertices.}
		 	\label{Figure: example closed path of konig type 2}
		 \end{figure}
\end{example}

	Just for simplicity, we denote by $\mathrm{in}(f)$  and $\mathrm{in}(I_{\cP})$ the initial monomial of a polynomial $f$ and the initial ideal of $I_{\cP}$, respectively, without mentioning the order $\prec_{\cP}$.\\  

\begin{proposition}\label{Prop: Grobner basis}
	Let $\cP$ be a closed path polyomino and $\prec_{\cP}$ be a lexicographic order given in Definition \ref{Defn: lex order}. Then the set of the generators of $I_{\cP}$ forms the reduced (quadratic) Gr\"obner basis of $I_{\cP}$ with respect to $\prec_{\cP}$.
\end{proposition}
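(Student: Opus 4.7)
The plan is to apply Buchberger's criterion (\cite[Section~2.4]{EHGrobner}): show that every $S$-polynomial between two inner $2$-minors of $\cP$ reduces to zero modulo the generators of $I_{\cP}$. Write each generator as $f_{[a,b]} = x_ax_b - x_cx_d$, where $a,b$ (resp.~$c,d$) are the diagonal (resp.~anti-diagonal) corners of the inner interval $[a,b]$. A preliminary observation simplifies the picture considerably: condition~(4) in the definition of a closed path forbids $\cP$ from containing any $2\times 2$ block of cells, since four such cells would all share the central vertex while having cyclic path-distance at least three. Consequently every inner interval of $\cP$ is a thin horizontal or vertical strip of consecutive cells $A_i, A_{i+1},\dots, A_{i+k}$ of the path. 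A direct inspection of Algorithm \ref{Algorithm: to define Y} (with reference to Table \ref{Table2} and the initial/final-step figures) then identifies, for each such strip, which of the two monomials $x_ax_b$ and $x_cx_d$ is the initial one with respect to $\prec_{\cP}$: roughly, it is always the monomial built from the pair of corners with labels in $Y_1$, or else the pair with smaller labels in $Y_1$. In particular every leading monomial is squarefree and of degree two.

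Once the initial terms are known, the standard observation that $S(f_I,f_J)$ reduces trivially whenever $\mathrm{in}(f_I)$ and $\mathrm{in}(f_J)$ are coprime drastically cuts down the case analysis. By condition~(4), two cells at cyclic path-distance larger than two have disjoint vertex sets, so a shared variable between $\mathrm{in}(f_I)$ and $\mathrm{in}(f_J)$ forces $I$ and $J$ to lie on the same short stretch of consecutive cells of $\cP$. These stretches are precisely the local configurations in the left column of Table \ref{Table2}, together with the initial/final-step configurations of Figures \ref{Figure: particular tetromino}, \ref{Figure: L configuration first step}, \ref{Figure: tetromino in the last part CASE 1} and \ref{Figure: L configuration A4 at north of A3}. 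For each such configuration one writes the two competing $2$-minors explicitly, reads off their initial monomials from the labeling rule, and verifies that $S(f_I,f_J)$ reduces using inner $2$-minors of intermediate strips of the same stretch. The labels in $Y_1$ have been placed so that the variable of largest label in the overlap is eliminated at the first rewrite, leaving a smaller $S$-pair that is handled recursively (or that becomes coprime, hence vanishes).

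Upgrading from Gr\"obner basis to \emph{reduced} Gr\"obner basis amounts to checking that no tail monomial $x_cx_d$ is divisible by the initial monomial of another generator. Algorithm \ref{Algorithm: to define Y} guarantees that in every generator at least one anti-diagonal corner carries a label in $Y_2$, whereas the prescribed initial monomials always involve a $Y_1$-label that does not appear in any other tail; a brief case-by-case check through the ten patterns of Table \ref{Table2} then closes the argument. The main obstacle is the sheer length of the case analysis rather than its difficulty: each individual $S$-pair reduces in at most two steps, but one must go through every overlap pattern and keep careful track of which vertices belong to $Y_1$ and which to $Y_2$. The labeling algorithm is designed precisely to make all these reductions uniform.
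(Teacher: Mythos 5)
Your proposal follows essentially the same strategy as the paper: apply Buchberger's criterion and reduce the verification to the local configurations encoded in Algorithm \ref{Algorithm: to define Y} and Table \ref{Table2}, using coprimality of leading terms to discard most pairs. The observation that condition (4) in the definition of a closed path forces $\cP$ to be thin, so that every inner interval is a horizontal or vertical strip of consecutive path cells, is correct and clarifies the structure; the paper uses this implicitly but never states it. Where you and the paper diverge is in how the non-coprime $S$-pairs are actually discharged: the paper does not redo the reductions from scratch but instead checks that the new order $\prec_{\cP}$ satisfies the numbered hypotheses of Lemmas 2, 3 and 4 of \cite{Cisto_Navarra_CM_closed_path} (and cites Remark 1 there for the overlap-two case), so the combinatorial bookkeeping is delegated to that earlier work, whereas you propose carrying out the intermediate rewrites directly, a self-contained but longer variant. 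Two caveats on your sketch: the heuristic ``the initial monomial is built from the pair of corners with labels in $Y_1$'' is only approximately right and would have to be pinned down configuration by configuration before the case analysis can actually close; and your separate reducedness check, while in the right spirit, addresses a point the paper does not spell out, so it would need to be written carefully (distinct leading monomials and non-divisibility of tails by leading monomials) rather than asserted through Table \ref{Table2} alone.
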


\begin{proof}
	Let $f=x_ax_b-x_cx_d$ and $g=x_\alpha x_\beta-x_\gamma x_\delta$ be two generators of $I_{\cP}$, whose inner intervals are $[a,b]$ and $[\alpha,\beta]$, respectively. We want to prove that the $S$-polynomial $S(f,g)$ of $f$ and $g$ reduces to zero with respect to the generators of $I_{\cP}$ (see \cite[Theorem 2.14]{EHGrobner}). If $\vert\{a,b,c,d\}\cap \{\alpha,\beta,\gamma,\delta\}\vert=2$, then the claim follows from \cite[Remark 1]{Cisto_Navarra_CM_closed_path}. Assume that $\vert\{a,b,c,d\}\cap \{\alpha,\beta,\gamma,\delta\}\vert=1$. Firstly, consider that $c=\gamma$ and $\mathrm{gcd}(\mathrm{in}(f),\mathrm{in}(g))=x_c$. If $\mathrm{gcd}(\mathrm{in}(f),\mathrm{in}(g))=1$, then there is nothing further to prove, as stated in \cite[Proposition 2.15]{EHGrobner}). In such a situation, let $C=[a,b]\cap [\alpha,\beta]$ and $h$ be the vertex in $[b,d]\cap [\alpha, \delta]$, so $\alpha$ and $b$ (resp. $c$ and $h$) are the diagonal (resp. anti-diagonal) corners of $C$. After computing $S(f,g)$, which equals to $x_ax_bx_\delta-x_\alpha x_dx_\beta$, we need to analyse some cases looking at Table \ref{Table2}. 
	\begin{itemize}
		\item If we are in the case VII, then $C$ is the cell with index $i+1$, $c=m+1$, $h=m'$, $b=m+1$, $a=m$ and $d=(m-1)'$, so $\mathrm{in}(S(f,g))=x_ax_bx_\delta$ and $x_h, x_\beta<_{\cP}x_b$, that is, the condition (1) of \cite[Lemma 2 (with reference to Figure 5(b))]{Cisto_Navarra_CM_closed_path} is satisfied.
		\item When we consider the case VIII, then $C$ is the cell with index $i+1$, $c=m-1$, $h=m'$, $b=(m+1)'$, $a=m$, $d=(m-1)'$ and $\delta=m+1$, so $\mathrm{in}(S(f,g))=x_ax_bx_\delta$ and $x_h, x_\beta<_{\cP}x_\delta$, which means the condition (1) of \cite[Lemma 2 (with reference to Figure 5(b))]{Cisto_Navarra_CM_closed_path} holds.
		\item Look at the case IX. Then we have that $C$ is the cell with index $i+2$, $c=m+1$, $h=(m+1)'$, $d\in \{m-1,m\}$, and $a\in Y_2$, so $\mathrm{in}(S(f,g))=x_\alpha x_dx_\beta$ and $x_h, x_a<_{\cP}x_d$, hence the condition (3) of \cite[Lemma 2 (with reference to Figure 5(b))]{Cisto_Navarra_CM_closed_path} is verified.
	\end{itemize} 
	In every case, $S(f,g)$ reduces to zero with respect to the generators of $I_{\cP}$. By following similar arguments as before and by examining Table \ref{Table2} and all the figures in Algorithm \ref{Algorithm: to define Y}, it is straightforward to verify that  $\prec_{\cP}$ and $<_{\cP}$ satisfy the conditions (1) or (2) in \cite[Lemmas 2, 3 or 4]{Cisto_Navarra_CM_closed_path}; additionally, when $b=\alpha$, it holds that $\mathrm{gcd}(\mathrm{in}(f),\mathrm{in}(g))=1$. Therefore, in any case, $S(f,g)$ reduces to zero with respect to the generators of $I_{\cP}$, thus proving the claim.
\end{proof}

\begin{corollary}\label{Coro: radicality + CM closed path no zig-zag}
	Let $\cP$ be a closed path polyomino. Then $I_{\cP}$ is a radical ideal and $K[\cP]$ is a Koszul ring. In particular, if $\cP$ does not contains zig-zag walks, then $K[\cP]$ is a normal Cohen-Macaulay domain with Krull dimension equal to $\vert V(\cP)\vert-\vert\cP\vert$.
\end{corollary}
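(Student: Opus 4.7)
The entire corollary is engineered to be a consequence of Proposition \ref{Prop: Grobner basis}, which supplies a squarefree, quadratic Gr\"obner basis of $I_{\cP}$ with respect to $\prec_{\cP}$. My plan is to unpack each assertion as a direct consequence of this fact together with one classical theorem.

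\textbf{Radicality and Koszulness.} Since $\mathrm{in}(I_{\cP})$ is a squarefree monomial ideal, $\mathrm{in}(I_{\cP})$ is itself radical; the standard lifting principle (if $\mathrm{in}(I)$ is radical then so is $I$, proved via the inclusion $\mathrm{in}(I)\subseteq \mathrm{in}(\sqrt{I})$ together with Hilbert function considerations, see e.g.\ \cite[Section 3.3]{Bruns_Herzog}) then gives that $I_{\cP}$ is radical. For Koszulness, I invoke Fr\"oberg's theorem: a standard graded algebra $S/I$ is Koszul whenever $I$ admits a quadratic Gr\"obner basis, which is precisely the content of Proposition \ref{Prop: Grobner basis}.

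\textbf{The case without zig-zag walks.} Here the primality of $I_{\cP}$ is already available from the literature on closed paths (see \cite{Cisto_Navarra_closed_path} and \cite{Trento}), so $K[\cP]$ is a domain and, being the quotient of $S_{\cP}$ by a prime binomial ideal, a toric ring. Normality then follows from Sturmfels' theorem: a toric ideal admitting a squarefree initial ideal defines a normal semigroup ring. Cohen-Macaulayness can be obtained from normality via Hochster's theorem on normal affine semigroup rings, and it is also recovered via the simplicial-complex route already mentioned in the introduction, namely \cite[Theorem 9.6.1]{Villareal}, which in the toric setting ensures shellability of $\Delta(\cP)$; shellability yields Cohen-Macaulayness of $K[\Delta(\cP)]$, and this property transfers to $K[\cP]$ through the squarefree Gr\"obner deformation.

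\textbf{Krull dimension.} Because $\mathrm{in}(I_{\cP})$ is squarefree, one has
\[
\dim K[\cP]=\dim K[\Delta(\cP)]=\dim \Delta(\cP)+1.
\]
The identity $|V(\cP)|=2|\cP|$ for closed paths (recalled in this section) reduces the claim to exhibiting a facet of $\Delta(\cP)$ of cardinality $|\cP|$, equivalently an independent subset of $V(\cP)$ of that size meeting no initial monomial of the generators of $I_{\cP}$; such a set is read off directly from the labelling produced by Algorithm \ref{Algorithm: to define Y}. The step I expect to require the most care is this dimension count, because it depends on tracking the orange vertices case by case through Table \ref{Table2}; everything else is a one-line invocation of a standard theorem applied to Proposition \ref{Prop: Grobner basis}.
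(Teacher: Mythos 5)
Your proof of the first three claims (radicality, Koszulness, normality plus Cohen--Macaulayness in the toric case) is essentially the paper's: radicality via squarefree initial ideal (the paper cites \cite[Lemma 6.51]{EHGrobner}), Koszulness via Fr\"oberg's theorem, i.e.\ a quadratic Gr\"obner basis (the paper cites \cite[Theorem 2.28]{binomial ideals}), and for the toric case normality from the squarefree initial ideal of a toric ideal (\cite[Corollary 4.26]{binomial ideals}, Sturmfels) plus Hochster's theorem for Cohen--Macaulayness (\cite[Theorem 6.3.5]{Bruns_Herzog}). Same proof, different citation labels.

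Where you genuinely diverge is the Krull dimension. You propose to compute $\dim K[\cP]$ as $\dim\Delta(\cP)+1$ and then exhibit a facet of $\Delta(\cP)$ of size $|\cP|$, which you correctly flag as the delicate step. Two remarks. First, a facet of size $|\cP|$ only bounds the dimension from below; you must also rule out faces of cardinality greater than $|\cP|$, and the argument needed there (pairing each cell of $\cP$ with a vertex pair $\{j,j'\}$ and showing no face can contain both members of a pair) is exactly what the paper carries out later in Proposition~\ref{Prop: dimension + no cone point}, and only under the hypothesis that $\cP$ has a zig-zag walk. To use it here you would have to check that the same facet analysis goes through for closed paths without zig-zag walks, which is extra case-checking you have not done. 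Second, the paper sidesteps all of this: it invokes \cite[Theorem 3.5 and Corollary 3.6]{Qureshi} to realize $I_\cP$ as the lattice ideal of a saturated lattice $\Lambda$ with $\rank_{\ZZ}(\Lambda)=|\cP|$, giving $\Ht(I_\cP)=|\cP|$ and hence $\dim K[\cP]=|V(\cP)|-|\cP|$ in one line, with no simplicial-complex bookkeeping. Your route is viable in principle but costs more; if you take it, be explicit that the dimension count needs both the lower and the upper bound and that the argument of Proposition~\ref{Prop: dimension + no cone point} must be extended to the non-zig-zag case.
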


\begin{proof}
	The fact that $I_{\cP}$ is radical and $K[\cP]$ is Koszul follows from \cite[Lemma 6.51]{EHGrobner} and \cite[Theorem 2.28]{binomial ideals}, respectively. Moreover, if $\cP$ has no zig-zag walks, then we have that $I_{\cP}$ is a toric ideal by \cite[Theorem 6.2]{Cisto_Navarra_closed_path}. Hence, by applying \cite[Corollary 4.26]{binomial ideals} and \cite[Theorem 6.3.5]{Bruns_Herzog}, we have that  $K[\cP]$ is a normal Cohen-Macaulay ring. Moreover, from \cite[Theorem 3.5 and Corollary 3.6]{Qureshi} we know that $I_\cP$ can be viewed as the lattice ideal of a saturated lattice $\Lambda$ with $\operatorname{rank}_\mathbb{Z}(\Lambda)=|\cP|$. Therefore the height $\operatorname{ht}(I_\cP)$ of $I_{\cP}$ is $|\cP|$, so $\dim K[\cP]=|V(\cP)|-|\cP|$.
\end{proof}

Note that the monomial orders provided in Definition \ref{Defn: lex order} considerably simplify that ones defined in \cite[Section 4]{Cisto_Navarra_CM_closed_path}. Moreover, those orders are much easier to implement in the \texttt{Macaulay2} package \cite{Package_M2,M2}.\\

\section{Shellability of a simplicial complex attached to a closed path} \label{Section: Shellability}

This section is devoted to studying the shellability of the simplicial complexes associated with a closed path polyomino. We begin recalling some basic facts about simplicial complexes. A \textit{finite simplicial complex} $\Delta$ on $[n]:=\{1,\dots,n\}$ is a
collection of subsets of $[n]$ such that $\{i\}\in \Delta$ for all $i=1,\dots,n$ and, if $F'\in \Delta$ and $F \subseteq F'$, then $F \in \Delta$. The elements of $\Delta$ are called \textit{faces}. The dimension of a face $F$ is one less than its cardinality and it is denoted by $\dim(F)$. A face of $\Delta$
of dimension 0 (resp. 1) is called a \textit{vertex} (resp. \textit{edge}) of $\Delta$. The maximal faces of $\Delta$ with respect to the set inclusion are said to be the \textit{facets} of $\Delta$. The dimension of $\Delta$ is given by $\max\{\dim(F):F\in \Delta\}$. A simplicial complex $\Delta$ is \textit{pure} if all the facets have the same dimension. The dimension of a pure simplicial complex $\Delta$ is trivially given by the dimension of a facet of $\Delta$. Given a collection $\mathrm{F}=\{F_1,\dots,F_m\}$ of subsets of $[n]$, we denote by $\langle F_1,\dots,F_m\rangle$ or briefly $\langle \mathrm{F}\rangle$ the simplicial complex consisting of all the subsets of $[n]$ which are contained in $F_i$, for some $i=1,\dots,m$. This simplicial complex is said to be generated by $F_1,\dots,F_m$; in particular, if $\mathcal{F}(\Delta)$ is the set of the facets of $\Delta$, then $\Delta$ is obviously generated by $\mathcal{F}(\Delta)$. Referring to \cite[Definition 5.1.11]{Bruns_Herzog}, we say that a pure simplicial complex $\Delta$ is \textit{shellable} if its facets can be ordered as $F_1,\dots,F_m$ in such a way that $\langle F_1,\dots,F_{i-1}\rangle \cap \langle F_i\rangle$ is generated by a non-empty set of maximal proper faces of $\langle F_i\rangle$, for all $i\in \{2,\dots,m\}$, or equivalently, for all $i,j\in [n]$ with $j<i$ there exist some $v \in F_i \setminus F_j$ and some $k \in[i-1]$ such that $F_i\setminus F_k =\{v\}$. In this case $F_1,\dots,F_m$ is called a \textit{shelling} of $\Delta$. \\
 Let $\Delta$ be a simplicial complex on $[n]$ and  $R=K[x_1,\dots,x_n]$, where $K$ is a field. To every collection $F=\{i_1,\dots,i_r\}$ of $r$ distinct vertices of $\Delta$, there is an associated monomial $x_F$ in $R$ where $x_F=x_{i_1}\dots x_{i_r}.$ The monomial ideal generated by all such monomials $x_F$ where $F$ is not a face of $\Delta$ is called the \textit{Stanley-Reisner ideal} and it is denoted by $I_{\Delta}$. The \textit{face ring} of $\Delta$, denoted by $K[\Delta]$, is defined as $R/I_{\Delta}$. A simplicial complex is called \textit{flag} if all its minimal non-faces have cardinality two; in
 other words, if its Stanley-Reisner ideal is generated by square-free monomials of degree
 two. We recall that, if $\Delta$ is a simplicial complex on $[n]$ of dimension $d$, then $\dim K[\Delta]=d+1$ (\cite[Corollary 6.3.5]{Villareal}).\\
  In this context, we now introduce the flag simplicial complex attached to a closed path (with respect to $\prec_{\cP}$).\\
 
\begin{definition}\rm 
Let $\cP$ be a closed path polyomino and $\prec_{\cP}$ be a monomial order provided in Definition \ref{Defn: lex order}. Since the set of the generators of $I_{\cP}$ forms the reduced (quadratic) Gr\"obner basis of $I_{\cP}$ with respect to $\prec_{\cP}$, then $\lt(I_\cP)$ is squarefree and it is generated in degree two. We denote by $\Delta(\cP)$ the flag simplicial complex on $V(\cP)$ with $\lt(I_{\cP})$ as Stanley-Reisner ideal. We call it the \textit{simplicial complex attached to $\cP$ with respect to $\prec_{\cP}$}.	\\
\end{definition}

\begin{remark}\rm \label{Remark: structure path with zig-zag walk}
	Let $\cP$ be a closed path polyomino. If $\cP$ contains a zig-zag walk, its shape is well-known by \cite[Section 6]{Cisto_Navarra_closed_path} (see also \cite[Remark 3.3]{Dinu_Navarra_Konig} for more details). Specifically, $\cP$ can be described as a non-disjoint union of suitably rotated or reflected cell arrangements, shown in Figure \ref{Figure: example structure closed path zig-zag}. For example, if $\cP$ contains a sequence of cells similar to Figure \ref{Figure: example structure closed path zig-zag} (a), then other parts of $\cP$ can be constructed by rotating or reflecting one of the cell configurations in Figure \ref{Figure: example structure closed path zig-zag} to overlap $\{A,B,C,D,E\}$ with $\{P,Q,R,S,T\}$.\\
	Consider the closed path on the left in Figure \ref{Figure: Example closed paths}. It is formed by joining a configuration from Figure \ref{Figure: example structure closed path zig-zag} (c) (where $E=P$) with another configuration from Figure \ref{Figure: example structure closed path zig-zag} (c) (where $E\neq P$), rotated 90 degrees counter-clockwise. This is then connected to another configuration from Figure \ref{Figure: example structure closed path zig-zag} (c) (where $E=P$) rotated 180 degrees counter-clockwise. Finally, the initial and latter configurations are connected by a cell arrangement from Figure \ref{Figure: example structure closed path zig-zag} (a), rotated 90 degrees clockwise.\\
	Moreover, observe that the vertex labelling method from	$D$ to $R$ remains consistent across every cell arrangement illustrated in Figure \ref{Figure: example structure closed path zig-zag}. Look at Figure \ref{Figure: example closed path of konig type 2} (a) for a concrete example.\\ 
	
	\begin{figure}[h!]
		\centering
		\subfloat[]{\includegraphics[scale=0.5]{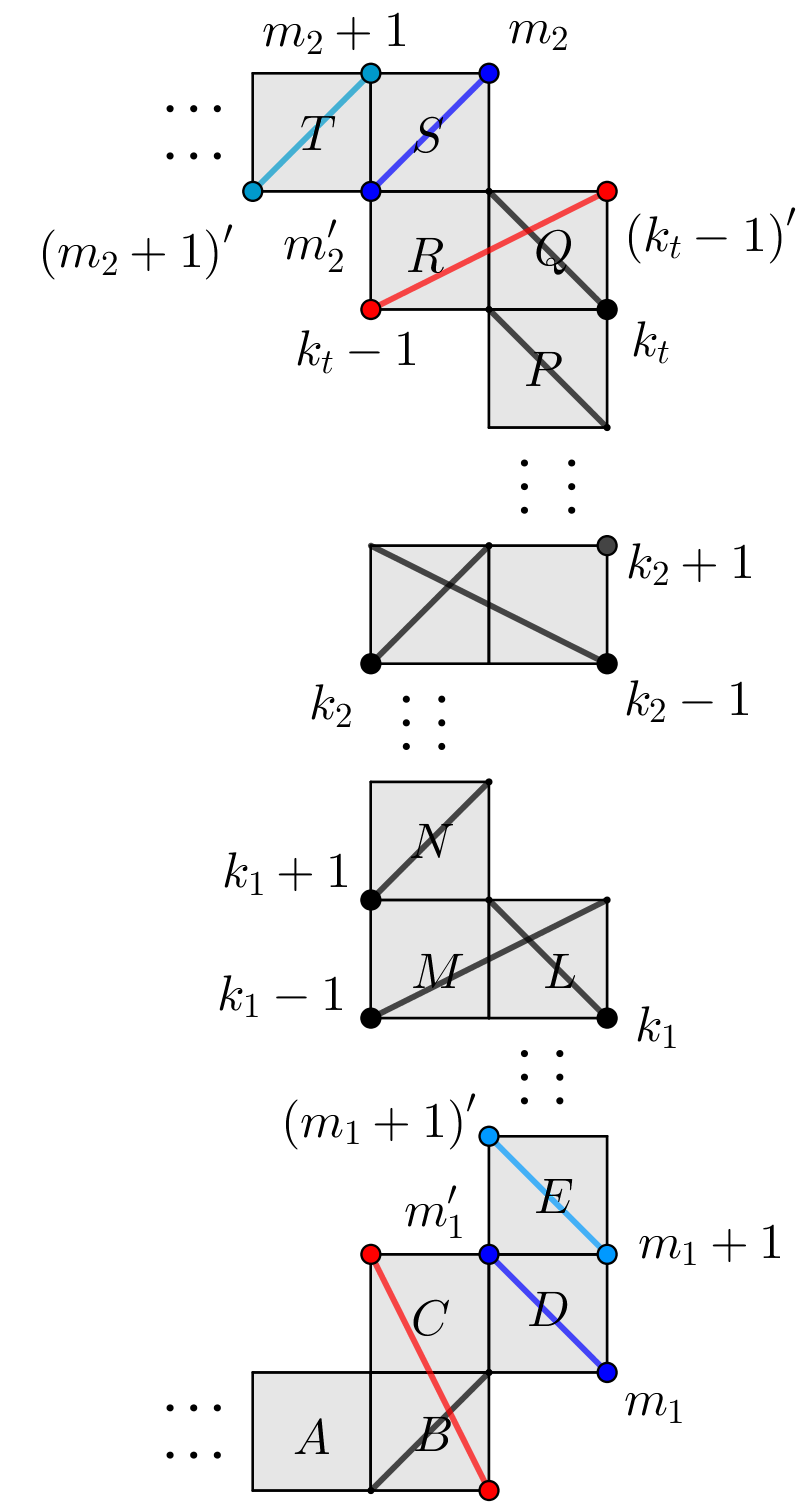}}\quad
		\subfloat[]{\includegraphics[scale=0.5]{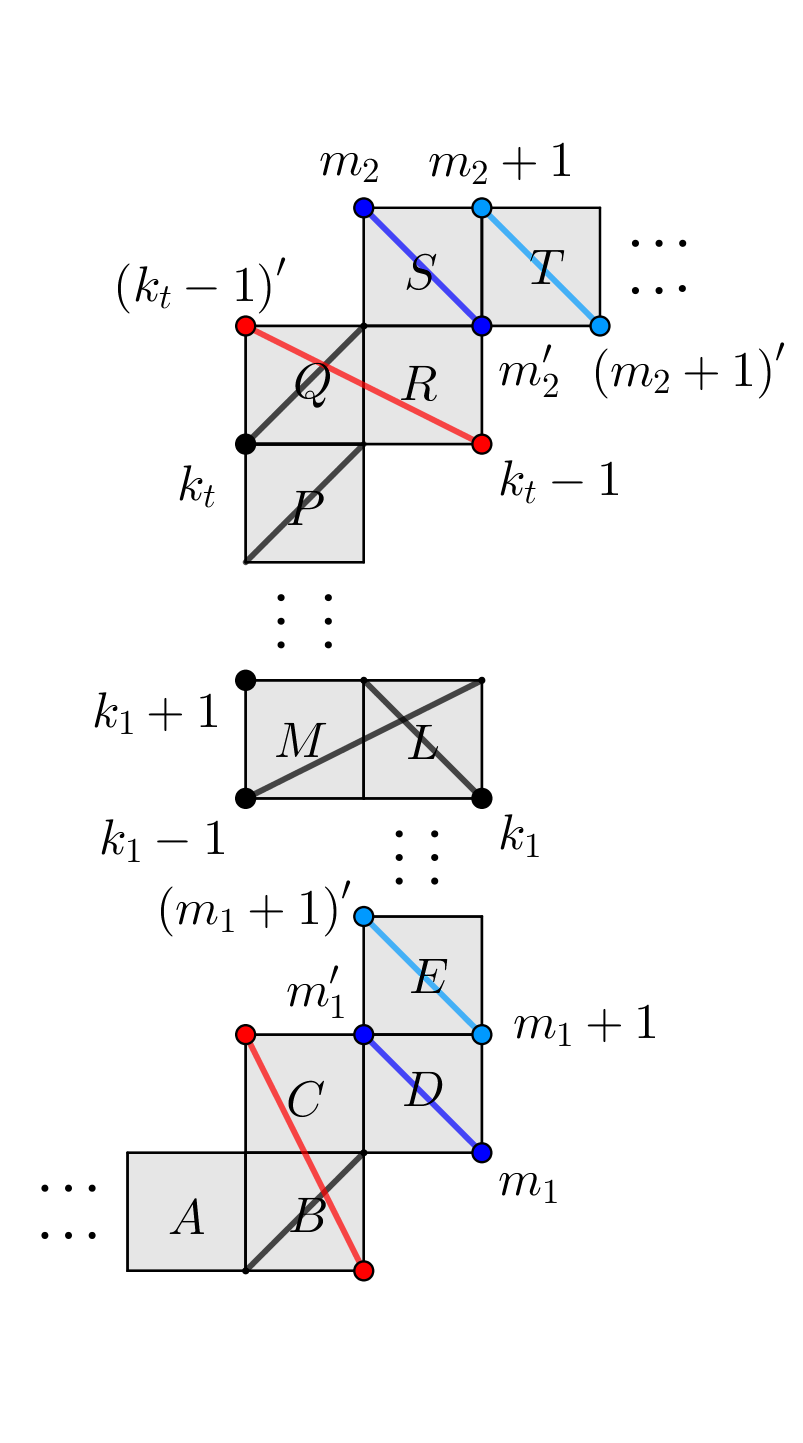}}\quad
		\subfloat[Either $E=P$ or $E\neq P$]{\includegraphics[scale=0.5]{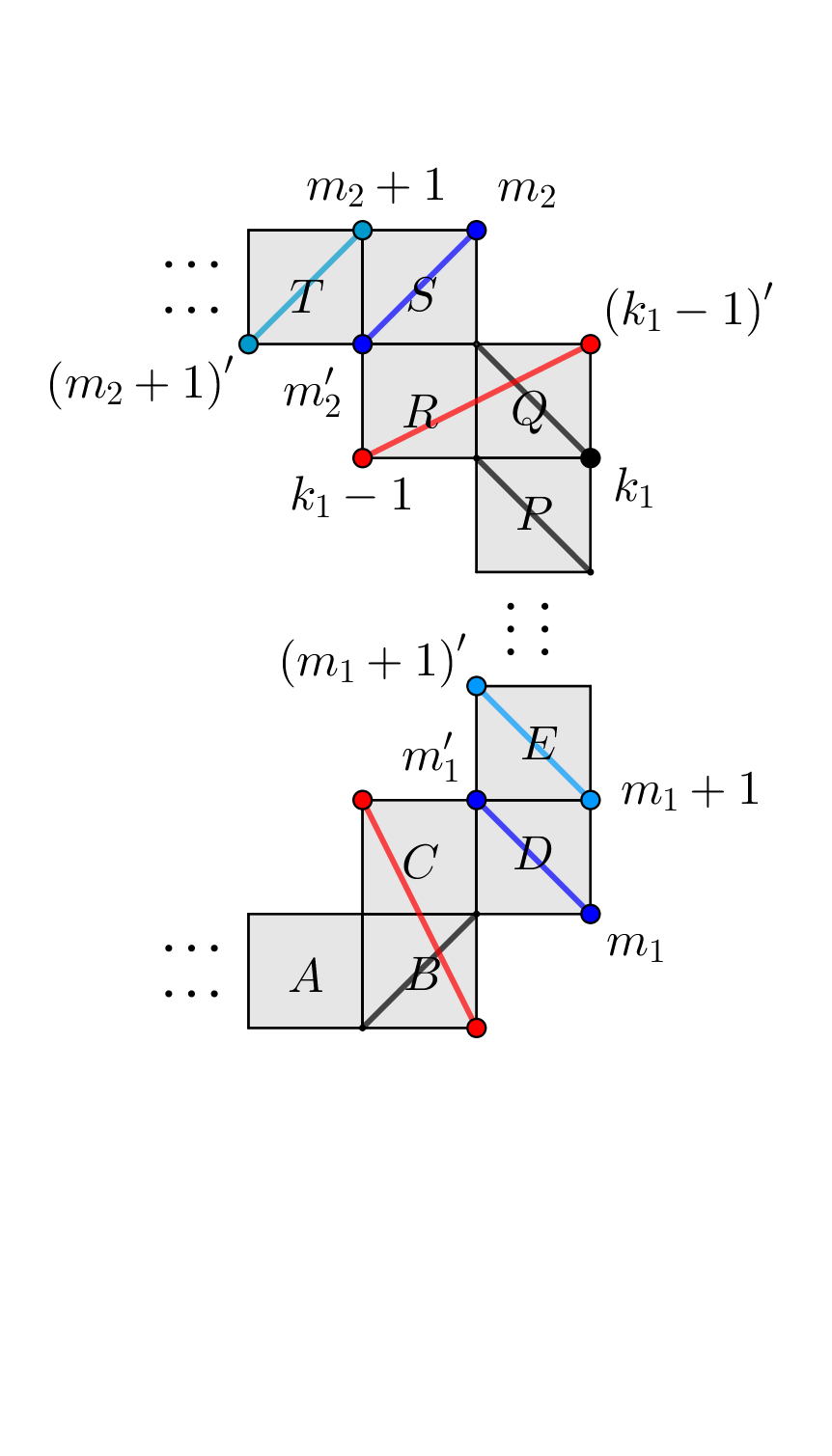}}
		\caption{Pieces of closed path with a zig-zag walk.}
		\label{Figure: example structure closed path zig-zag}
	\end{figure}
\end{remark}

\begin{proposition}\label{Prop: dimension + no cone point}
	Let $\cP$ be a closed path polyomino with a zig-zag walk, $\prec_{\cP}$ be a monomial order provided in Definition \ref{Defn: lex order} and $\Delta(\cP)$ be the simplicial complex attached to $\cP$ with respect to $\prec_{\cP}$. Then $\Delta(\cP)$ is a $(d-1)-$dimensional pure simplicial complex, where $d=\vert V(\cP)\vert/2$.
\end{proposition}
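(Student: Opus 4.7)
The plan is to establish the dimension and the pureness in two steps. Set $d = |V(\cP)|/2$; since $|V(\cP)| = 2|\cP|$ for every closed path by \cite[Lemma 2.1]{Dinu_Navarra_Konig}, one has $d = |\cP|$, so the target is $\dim \Delta(\cP) = |\cP|-1$ together with pureness.

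For the dimension, $\lt(I_{\cP})$ is the Stanley--Reisner ideal of $\Delta(\cP)$ by definition, so $\dim K[\Delta(\cP)] = \dim \Delta(\cP) + 1$. Since Krull dimension is preserved by passing to the initial ideal, $\dim K[\Delta(\cP)] = \dim K[\cP] = |V(\cP)| - \Ht(I_{\cP})$. As in the proof of Corollary \ref{Coro: radicality + CM closed path no zig-zag}, the lattice-ideal analysis of \cite[Theorem 3.5 and Corollary 3.6]{Qureshi} yields $\Ht(I_{\cP}) = |\cP|$: the containment $I_L(\Lambda) \subseteq I_{\cP}$ of the lattice ideal forces $\Ht(I_{\cP}) \geq |\cP|$, while the reverse bound follows by exhibiting a minimal prime of $I_\cP$ of height $|\cP|$, which can be produced from a toric specialization built around the zig-zag structure. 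Hence $\dim \Delta(\cP) = d - 1$.

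For pureness, the facets of $\Delta(\cP)$ are the maximal independent sets of the graph $G$ on $V(\cP)$ whose edges are the leading pairs of the inner $2$-minors of $\cP$. By the dimension step, every facet has size at most $d$, so it suffices to show that every face $F$ with $|F| < d$ admits an extension $v \in V(\cP) \setminus F$ with $F \cup \{v\}$ still a face. I would use the bipartition $V(\cP) = Y_1 \sqcup Y_2$ from Algorithm \ref{Algorithm: to define Y} (with $|Y_1| = |Y_2| = |\cP|$) together with the explicit local shape of $\cP$ from Remark \ref{Remark: structure path with zig-zag walk}: the labeling is designed so that each cell's leading pair has one endpoint in $Y_1$ and one in $Y_2$, giving $G$ a controlled bipartite-like structure, and one can reduce the extension question to a local check within the pieces of Figure \ref{Figure: example structure closed path zig-zag}.

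The hard part will be this extension argument, because $G$ has edges coming not only from individual cells but also from longer inner intervals spanning several colinear cells; overlapping non-face constraints must be handled simultaneously. The case analysis can be carried out within each local piece of Figure \ref{Figure: example structure closed path zig-zag}, verifying that the $Y_1/Y_2$ labeling always leaves an extension vertex available whenever $|F| < d$, which then gives pureness.
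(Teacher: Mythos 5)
Your argument for the dimension runs through the identity $\dim K[\Delta(\cP)]=\dim K[\cP]=|V(\cP)|-\Ht(I_{\cP})$ and therefore requires $\Ht(I_{\cP})=|\cP|$ for a closed path \emph{with} a zig-zag walk. This is a genuine gap. The citation to Corollary~\ref{Coro: radicality + CM closed path no zig-zag} does not help: there the Qureshi lattice-ideal argument (``$I_{\cP}$ is the lattice ideal of a saturated lattice of rank $|\cP|$'') is invoked only in the no-zig-zag case, where $I_{\cP}$ is prime. When $\cP$ has a zig-zag walk, $I_{\cP}$ is \emph{not} prime, hence not a lattice ideal of a saturated lattice, so that route closes. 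Your proposed replacement, ``exhibiting a minimal prime of height $|\cP|$ from a toric specialization built around the zig-zag structure,'' is a placeholder rather than an argument. Worse, the equality $\dim K[\cP]=|\cP|$ in the zig-zag case is precisely what the paper \emph{derives} from this proposition in Corollary~\ref{Coro: Cohen-Macaulay}; assuming it (or its height reformulation) as an input makes the derivation circular relative to what this paper is trying to re-prove.

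The paper avoids all of this with a direct combinatorial count that you should adopt. First, $F_0$, the set of vertices labelled by $Y_2$, is shown to be a face (no two of its elements form a leading pair) and to be maximal, giving a facet of size $d=|V(\cP)|/2$. For the upper bound, use $|V(\cP)|=2|\cP|$ to assign to each cell of $\cP$ a pair $\{i,i'\}$ with $i\in Y_1$, $i'\in Y_2$, so that these pairs partition $V(\cP)$; by construction of the labelling, each $x_i x_{i'}$ is the initial monomial of the inner $2$-minor of the corresponding cell, hence each $\{i,i'\}$ is a minimal non-face. A face can contain at most one vertex from each pair, so by pigeonhole it has at most $|\cP|=d$ vertices. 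This gives $\dim\Delta(\cP)=d-1$ with no reference to heights or lattices, and in fact it is what lets one later \emph{deduce} $\Ht(I_{\cP})=|\cP|$ in the zig-zag case, rather than assume it.

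Your pureness step is in the right spirit but, as you acknowledge, not actually carried out; note also that the bipartite structure you describe is a bit optimistic, since the graph of leading pairs is not bipartite with respect to $Y_1\sqcup Y_2$ (there are leading pairs with both endpoints in $Y_1$, and both in $Y_2$, coming from non-cell inner intervals). The paper sidesteps this by reusing the same cell-to-pair assignment: if a facet $H$ had fewer than $|\cP|$ vertices, then some pair $\{j,j'\}$ satisfies $j,j'\notin H$, and one of $H\cup\{j\}$, $H\cup\{j'\}$ is checked (using the local shape of $\cP$ from Remark~\ref{Remark: structure path with zig-zag walk}) to still be a face, contradicting maximality. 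If you want to complete your proposal, replace the height computation by the $F_0$-plus-pairing argument, and then your extension idea for pureness can be made to work by that same pairing.
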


\begin{proof}
	We start proving that the dimension of $\Delta(\cP)$ is equal to $d-1$, where $d=\vert V(\cP)\vert/2$.  Denote by $F_0$ the set of the vertices of $\cP$ labelled by $Y_2$ (for instance, look at the orange points in Figure \ref{Figure: some vertices of F_0}). We firstly show that $F_0$ is a facet of $\Delta(\cP)$ with $d$ vertices. By Remark \ref{Remark: structure path with zig-zag walk} we can restrict ourself on one of the arrangements in Figure \ref{Figure: example structure closed path zig-zag}, up to rotations or reflections. We focus on Figure \ref{Figure: example structure closed path zig-zag} (a) (equivalently, refer to Figure \ref{Figure: some vertices of F_0} (a)), since the discussion for the other two situations is completely same. Denote the vertices of the cells that goes from $C$ to $R$ by $\cV(C,R)$. Observe that in $\cV(C,R)\cap F_0$ there are not two vertices of $\cP$, let say $v$ and $w$, such $\mathrm{in}(f)=x_vx_w$ for some generator of $I_{\cP}$. Hence $F_0$ is a face of $\Delta(\cP)$. Moreover, $F_0$ is a maximal face of $\Delta(\cP)$, since $F_0\cup \{i\}$ is not a face of $\Delta(\cP)$, for all $i\in [n]$. Then $F_0$ is a facet of $\Delta(\cP)$ with $d=\vert V(\cP)\vert/2$ vertices. In order to prove that the dimension of $\Delta(\cP)$ is $d-1$, we need to show that there is no face of $\Delta(\cP)$ which contains more than $d$ vertices. First, we know that $\vert \cP\vert = \vert V(\cP)\vert/2$, so this allows us to associate every cell of $\cP$ with a pair of vertices of $\cP$, in the following way: consider the path $\{D,E,\dots,L,M\}$ as shown in Figure \ref{Figure: example structure closed path zig-zag} (a)-(b), up to rotations or reflections (or $\{D,E,\dots,P,Q,R\}$ as in Figure \ref{Figure: example structure closed path zig-zag} (c)); we assign the pairs $\{m_1,m_1'\}, \{m_1+1,(m_1+1)'\},\dots,\{k_1,k_1'\}$ to $D,E,\dots,L$, respectively, and $\{k_1-1,(k_1-1)'\}$ to $M$; finally, this assignment extends to $\cP$, by treating $\cP$ as union of arrangements similar to $\{D,E,\dots,L,M\}$. Thus ensuring that each cell in $\cP$ is associated with exactly two distinct vertices in $V(\cP)$. Assume now by contradiction that there exists a face $H$ of $\Delta(\cP)$ with $\vert H\vert > \vert V(\cP)\vert/2=\vert \cP\vert$. Let $H_1=H\cap Y_1$ and $H_2=H\cap Y_2$, and set $h_1=\vert H_1\vert$ and $h_2=\vert H_2\vert$. For all $i\in H_1$, we have $i'\notin H_2$, due to $H$ be a face of $\Delta(\cP)$, so $H_1\cap \{i:i'\in H_2\}=\emptyset$. Since $h_1+h_2=\vert \cP\vert +1$, then from the previous observation it follows that there exists a $\{j,j'\}\subset H$, for some $j\in [n]$, which is a contradiction. Therefore, a facet with more that $\vert \cP\vert$ vertices cannot exist in $\Delta(\cP)$, so the dimension of $\Delta(\cP)$ is $d-1$, as desired. To finally prove the pureness, it is enough to show that there does not exist a facet of $\Delta(\cP)$ having less than $\vert \cP\vert$ vertices. Suppose that there is a facet $H$ of $\Delta(\cP)$ such that $\vert H\vert <\vert \cP\vert$. By employing a similar line of reasoning as previously used, it is easy to find a $j\in Y_1$ and $j'\in Y_2$ with $j,j'\notin H$ such that $H\cup \{j\}$ is a face of $\Delta(\cP)$, which is a contradiction with the maximality of $H$. Hence $\Delta(\cP)$ is a pure simplicial complex. 
\end{proof}

 If $\cP$ is a closed path without a zig-zag walk, then $\Delta(\cP)$ is shellable by \cite[Theorem 9.6.1]{Villareal}.  We aim to show that $\Delta(\cP)$ is shellable and, in particular, to provide a suitable shelling order, even in the case where $\cP$ has a zig-zag walk. Let us begin by introducing the following definitions of \textit{left}, \textit{right}, and \textit{upper step} of $\cP$.\\
 
 \begin{definition}\rm\label{Defn: steps}
 	Let $\cP$ be a closed path with a zig-zag walk and $\Delta(\cP)$ be the simplicial complex attached to $\cP$. Let $F$ be a facet of $\Delta(\cP)$ and $F'\subseteq F$ with $\vert F'\vert=3$. We do some rotations or reflections of $\cP$ in order to put the cell arrangements containing $F'$ as $\{D,E,\dots,P,Q,R\}$ in Figure \ref{Figure: some vertices of F_0}.
 	
 	\begin{itemize}
 	\item We say that $F'$ is a \textit{right step} of $F$ or that $F$ has a \textit{right step} $F'$ if only one of the following holds: 
 	\begin{itemize}
 	\item $F'=\{(a-1,b),(a,b),(a,b+1)\}$ for some $(a,b)\in V(\cP)$ and $(a,b)$ is the lower right corner of the cell $[(a-1,b),(a,b+1)]$ of $\cP$;
 	\item $F'=\{(a-2,b),(a,b),(a,b+1)\}$ for some $(a,b)\in V(\cP)$, $(a+1,b)\notin F$ and $(a,b)$ is the lower right corner of the cell $[(a-1,b),(a,b+1)]$ of $\cP$.
 	\end{itemize}
 	In such a case, $(a,b)$ is called the \textit{lower right corner} and $[(a-1,b),(a,b+1)]$ the \textit{step cell} of $F'$. \\
 	
 	\item Similarly, $F'$ is a \textit{left step} of $F$ or that $F$ has a \textit{right step} $F'$ if either
 		\begin{itemize}
 			\item $F'=\{(a,b+1),(a,b),(a+1,b)\}$ for some $(a,b)\in V(\cP)$ and $(a,b)$ is the lower left corner of the cell $[(a,b),(a+1,b+1)]$  of $\cP$, or
 			\item $F'=\{(a,b+1),(a,b),(a+2,b)\}$ for some $(a,b)\in V(\cP)$, $(a+1,b)\notin F$ and $(a,b)$ is the lower left corner of the cell $[(a,b),(a+1,b+1)]$  of $\cP$.
 		\end{itemize}
 		Here, $(a,b)$ is said to be the \textit{lower left corner} and $[(a,b),(a+1,b+1)]$ the \textit{step cell} of $F'$.\\
 		
 		\item We say that $F'$ is an \textit{upper step} if, with reference to Figure \ref{Figure: some vertices of F_0}, $F'=\{m_2+1,m_2,k\}$ for some $k$ in the vertical edge interval $[(j-1)',k_t']$ and every vertex in $]k,k_t'[$ does not belong to $F$. \\
 		In this case, $m_2$ and $S$ are called the \textit{upper corner} and the \textit{step cell} of $F'$, respectively.
 		\end{itemize}
  \end{definition}

Observe that $\{m_2+1,m_2,k_t'\}$ or $\{m_2+1,m_2,(k_t-2)'\}$ can be viewed as either right steps or upper steps, depending on the perspective from which we consider the cell arrangement. However, this distinction does not affect our arguments. Now, we are ready to discuss how the vertices of a facet of $\Delta(\cP)$ can be arranged in $\cP$. \\

\begin{discussion}\rm \label{Discussion}
	Let $\cP$ be a closed path polyomino with a zig-zag walk, $\prec_{\cP}$ be a monomial order provided in Definition \ref{Defn: lex order} and $\Delta(\cP)$ be the simplicial complex attached to $\cP$ with respect to $\prec_{\cP}$. In this discussion we want to show how a facet of $\Delta(\cP)$ can be figure out in $\cP$. We will provide an explanation that avoids extreme formalism, making it easier to understand the process. Recall that $F_0$ is the set of the vertices of $\cP$ labelled by $Y_2$. We know that $\cP$ consists of cell arrangements $\{D,E,\dots, L,M\}$ up to rotations and reflections, so we focus on the sequence of cells $\{D,E,\dots,L,M,\dots,K,O\}$ referring to Figure \ref{Figure: some vertices of F_0} (a) and, in particular, let us start restricting ourself just on $\{D,E,\dots,L,M\}$. The discussion is the same if we consider Figure \ref{Figure: some vertices of F_0} (b).
	
	\begin{figure}[h]
		\centering
		\subfloat[$m_1<k_1<\dots<k_t<m_2$]{\includegraphics[scale=0.6]{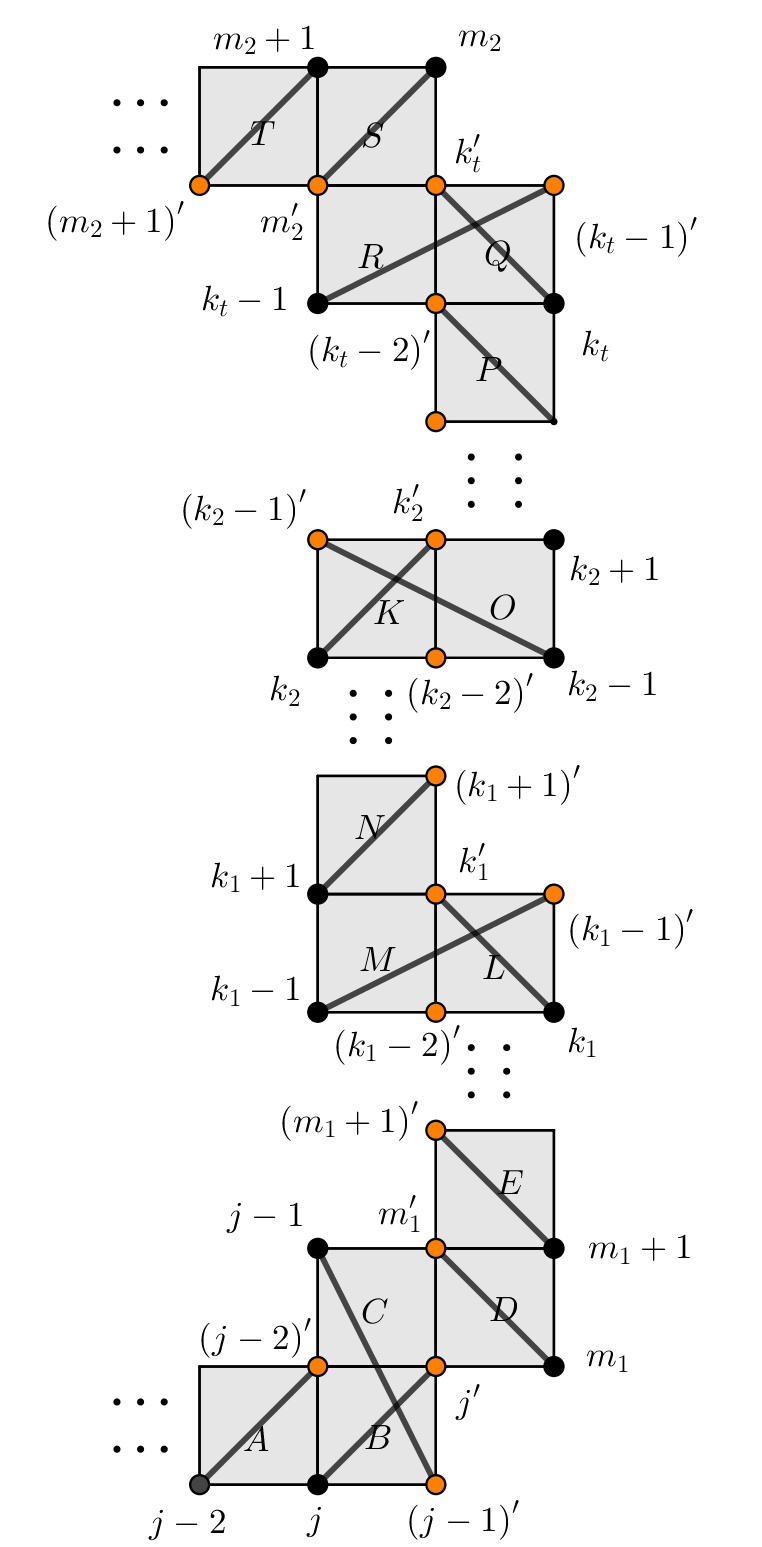}}\
		\subfloat[$m_1<k_1<\dots<k_t<m_2$]{\includegraphics[scale=0.6]{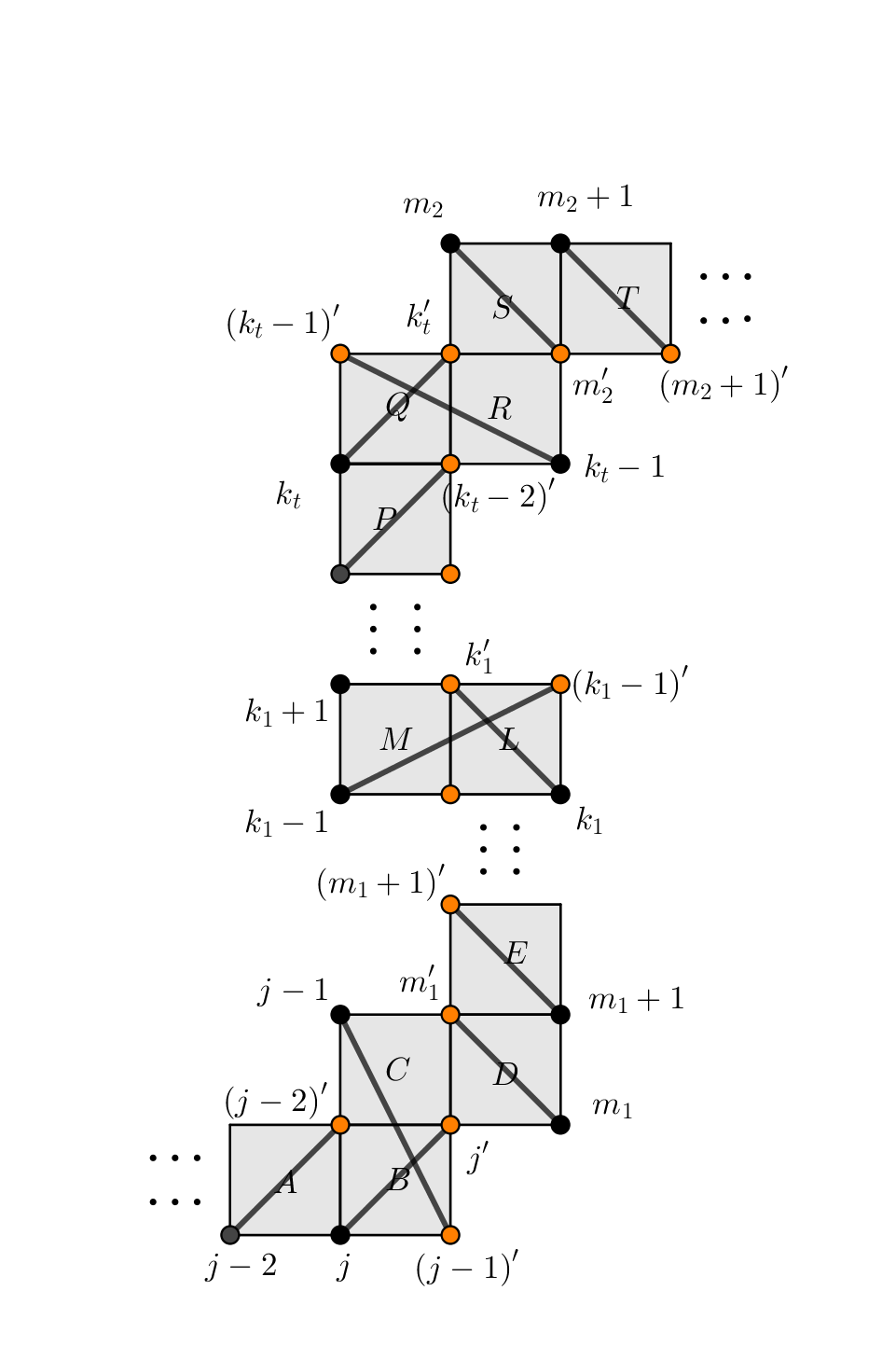}}
		\caption{Some vertices of $F_0$.}
		\label{Figure: some vertices of F_0}
	\end{figure}

	\begin{enumerate}[1)]
		\item Consider the vertex $k_1'$ and observe that $F_1=\left(F_0\setminus\{k_1'\}\right)\cup \{k_1\}$ is a facet of $\Delta(\cP)$ and $\{(k_1-1)',k_1,(k_1-2)'\}$ is a right step of $F_1$. Next, take the vertex $(k_1-2)'$ and the facet $F_1$, so $F_2=\left(F_1\setminus\{(k_1-2)'\}\right)\cup \{(k_1-2)'\}$ is also a facet of $\Delta(\cP)$ and $\{(k_1-3)',k_2,k_1\}$ is a right step of $F_2$. We can continue this procedure until we reach the vertex $m_1'$, obtaining a new facet from the previous one by replacing just one vertex, specifically, for all $j=2,\dots, k_1-m_1$ the set $F_j=\left( F_{j-1}\setminus \{(k_1-j)'\}\right)\cup \{k_1-j\} $ is a facet of $\Delta(\cP)$ and $k_1-j$ is a lower right corner of a step of $F_j$. 
		\item Now, consider $F_0$ again. If we take $\left(F_0\setminus\{(k_1-1)'\}\right)\cup\{k_1-1\}$, then we do not obtain a face of $\Delta(\cP)$ because $\{k_1-1,k_1'\}$ is contained in $ \left(F_0\setminus\{(k_1-1)'\}\right)\cup\{k_1-1\}$ and $\mathrm{in}(f)=x_{k_1'}x_{k_1-1}$, where $f=x_{k_1'}x_{k_1-1}-x_{k_1+1}x_{(k_1-2)'}$. A similar contradiction arises if we replace $k_1'$ with $k_1-1$ in $F_0$, or if we replace $k_1'$ with $k_1-1$ (or any vertex in $[k_1-1,k_2]$) in $F_0$. This also applies to any replacement of $(k_1-j)'$  with $k_1-i$ in $F_{j-1}$ for all $j=2,\dots, k_1-m_1$ and for all $i=j+1,\dots,j-1$ (if $k_1-i$ exists). However, intuitively, if we shift every orange vertex $v$ in the interval $[k_1',k_2']$ from the top to the bottom in the related opposite $v'$, then we will eventually move $(k_1'-1)'$ to $k_1-1$. Formally, consider $F_0$ and the vertex $k_2'$, then $G_1=\left(F_0\setminus\{k_2'\}\right)\cup \{k_2\}$ is a facet of $\Delta(\cP)$ and $\{(k_2-1)',k_2,(k_2-2)'\}$ is a left step of $G_1$. As done in 1), continue this procedure until reaching the vertex $k_1'$, consistently obtaining a new facet from the previous one by replacing only one orange vertex at a time; that is, $G_j=\left( G_{j-1}\setminus \{(k_2-j)'\}\right)\cup \{k_2-j\}$ with $k_2-j$ a lower left corner of a step of $G_j$, for all $j=2,\dots, k_2-k_1$. We now need to distinguish two situations. If $j<k_2-k_1$, then $G_j$ has the left step $\{k_2-j+1,k_2-j,(k_2-j-1)'\}$. Moreover, we cannot replace $(k_1-1)'$ with $k_1-1$ in $G_j$ but we can do it for $k_1'$ with $k_1$. Thus, take $G_j$, replace $k_1'$  with $k_1$ in $G_j$ and, then,  apply the procedure described in 1), where $(G_j\setminus\{k_1'\})\cup \{k_1\}$ takes the place of $F_0$. Assume $j=k_2-k_1$, so $G_j$ has the left step $\{k_1+2,k_1+1,k_1'\}$. Then, there are two possibilities:
		\begin{enumerate}[(a)]
		\item we can apply procedure 1) to $G_j$, where $G_0$ itself plays the role of $F_0$; thus every new facet will be the left step $\{k_1+2,k_1+1,k_1'\}$ and a right step with lower right corner in $[m_1,k_1]$. 
		\item Alternatively, we can replace $(k_1-1)'$ with $k_1-1$ in $G_j$. This replacement is now feasible, that is, $G'= (G_j\setminus\{(k_1-1)'\})\cup \{k_1-1\}$ is a facet of $\Delta(\cP)$ with left step $\{k_1+1,k_1-1,(k_1-2)'\}$.Then, apply procedure 1) to $G'$, so that every new facet will be the left step $\{k_1+1,k_1-1,k_1\}$ and a right step with lower right corner in $[m_1,k_1[$. \end{enumerate} 
		In both cases, a facet of $\Delta(\cP)$ can be obtained from the previous one by replacing just one orange vertex.
		
		\item Consider $F_0$ and $\{P,Q,R\}$. As we said before, the set $(F_0\setminus \{(k_t-1)'\})\cup\{k_t-1\}$ is not a facet of $\Delta(\cP)$, meaning that we cannot obtain a facet of $\Delta(\cP)$ by replacing $(k_t-1)'$ with $k_t-1$ in $F_0$. However, if we first replace $k_t'$ with $k_t$ in $F_0$, then that previous replacement becomes possible. Hence, consider $H=(F_0\setminus \{k_t'\})\cup\{k_t\}$ (so $H$ has $\{(k_t-2)',k_t,(k_t-1)'\}$ as a right step) and we can proceed in two different ways. 
		\begin{enumerate}[(a)]
			\item  Apply procedure 1) to $H$ until reaching $k_{t-1}'$.
			\item Define $H'=(H\setminus \{(k_t-1)'\})\cup\{k_t-1\}$. Here, $H'$ has $\{(k_t-2)',k_t-1,m_2'\}$ as a left step. Then, apply procedure 1) to $H'$ until reaching $k_{t-1}'$ again. The faces, which we obtain, have the left step $\{k_t,k_t-1,m_2'\}$ and a right step with right lower corner in $[k_t-1,k_t[$.
		\end{enumerate}
		 Thus, in both approaches, we get a facet of  $\Delta(\cP)$ from the previous one by replacing just one orange vertex.
		\item Finally, we analyze the scenario involving the set $\{P,Q,R,S,T\}$ or equivalently $\{A,B,C,D,E\}$, since
		$\cP$ can be constructed by rotating or reflecting one of the cell configurations in Figure \ref{Figure: some vertices of F_0} overlapping $\{A,B,C,D,E\}$ with $\{P,Q,R,S,T\}$. For simplicity, we look at Figure \ref{Figure: some vertices of F_0} (a). Consider a facet $F$ of $\Delta(\cP)$. If $m_2\in F$, (the vertices $m_2+1,\dots$ are also in $F$), then it is not possible to replace $(k_t-1)'$ with $k_t-1$ in $F$ to obtain a facet of $\Delta(\cP)$. However, it is possible to replace $k_t'$ with $k_t$, that is, $K=(F\setminus\{k_t'\})\cup\{(k_t-1)'\}$ is a facet of $\Delta(\cP)$ having the right step $\{(k_t-2)',k_t,(k_t-1)'\}$ (note that $\{m_2+1,m_2,(k_t-2)'\}$ is a right step of $K$ as well). Now, by applying 1) at $K$, we obtain a facet $K'$ having a right step whose lower right corner is in $[k_2-1,k_t[$. Moreover, in this case, it is straightforward to verify that there exists a vertex $v$ in the edge interval $[(j-1)',(k_t-2)'[$ such that $\{v,m_2,m_2+1\}$ is a right step of $K'$. \\
	\end{enumerate}
	
	The procedure described in the previous four points can be naturally extended to any sequence such as  $\{D,E,\dots,L,M,\dots,N,K\}$ within $\{D,E,\dots,P,Q,R\}$ (up to rotations and reflections), and consequently to the entire sequence $\{D,E,\dots,P,Q,R\}$. Additionally, since $\cP$ consists of disjoint union of the arrangements $\{D,E,\dots,P,Q,R\}$ given in Figure \ref{Figure: some vertices of F_0}, the discussed procedure can be applied piece by piece to the whole of $\cP$. \\
\end{discussion}

	\begin{example}\rm
	We provide an example to illustrate how to identify a facet of $\Delta(\cP)$ in $\cP$. Consider the closed path $\cP$ in Figure \ref{Figure: exa of a facet} (a); the orange circle vertices and the black cross ones represent the facet $F_0$ and another facet $F$ of $\Delta(\cP)$, respectively. Observe that $F$ has $\{6',7,8\}$, $\{18',20,19'\}$, $\{24',26,25'\}$, $\{24',1,2\}$ as right steps ($\{24',1,2\}$ can be viewed as upper step, too) and $\{12,11,13'\}$, $\{14',15,17'\}$, $\{19',21,23\}$ as left steps. Take in consideration the facet $G$ in Figure \ref{Figure: exa of a facet} (b), then $\{5,6,25'\}$ and $\{1,2,18'\}$ are upper steps, $\{17',15,14'\}$ is a left step and $\{18',20,19'\}$, $\{22',24,26\}$ are right steps. Finally, observe that $F_0$ never has left, right or upper steps.  
\begin{figure}[h]
	\subfloat[Facet $F$.]{\includegraphics[scale=0.65]{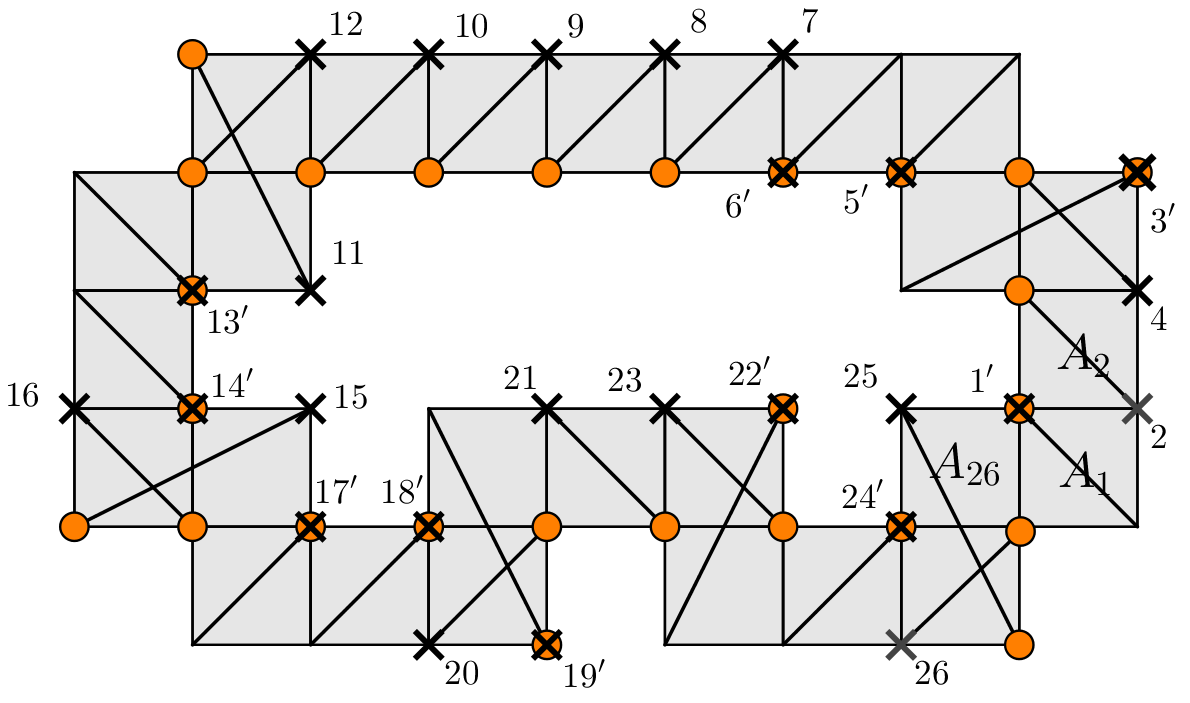}}
	\subfloat[Facet $G$.]{\includegraphics[scale=0.65]{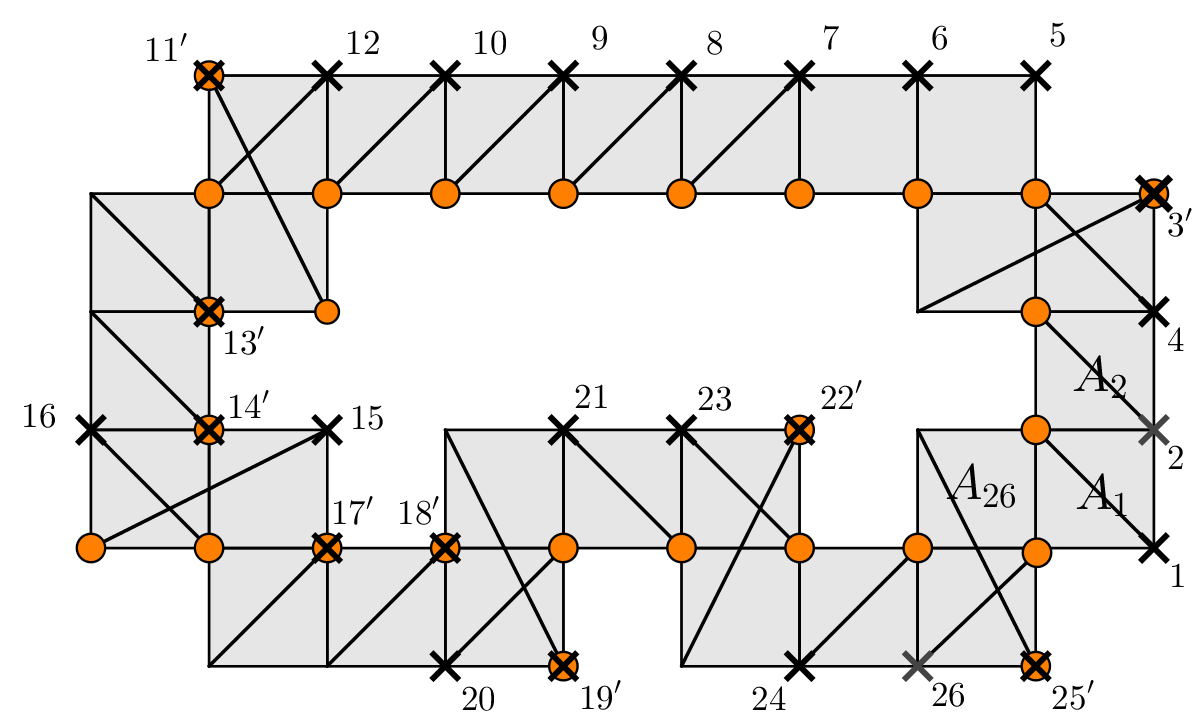}}\
	\caption{Example of facets.}
	\label{Figure: exa of a facet}
\end{figure}
	\end{example}

In what follows, we will show that $\Delta(\cP)$ is shellable. We will provide an explicit shelling order in the next definition, explained through a pseudo-algorithm. The used commands come primarily from \texttt{Macaulay2} (refer to \cite{M2} for further details).\\

\begin{definition}\rm \label{Shelling order}
	Let $\cP:A_1,\dots,A_n$ be a closed path polyomino with a zig-zag walk, where $A_i\neq A_j$ for all $i,j=1,\dots,n$ and $i\neq j$, $\prec_{\cP}$ be a monomial order provided in Definition \ref{Defn: lex order} and $\Delta(\cP)$ be the simplicial complex attached to $\cP$ with respect to $\prec_{\cP}$. We want to define a linear order of the facets of $\Delta(\cP)$ in a recursive manner.
	By considering $A_1,A_2,\dots$ and by walking on the path $\cP$ from $A_1,A_2$ to $A_n$, we denote by $\cC_1,\dots,\cC_s$ the cell arrangements of $\cP$ as in Figure $\ref{Figure: some vertices of F_0}$ (up to rotations and reflections). \\

\textit{\underline{Procedure 1.}} Let us start with $\cC_1$ and consider the facet $F_0$. Referring to Figure \ref{Figure: some vertices of F_0}, we identify $D=A_1$, $E=A_2$ and $C=A_{n}$. We will outline two steps in this first procedure.\\

\noindent\textit{First step.} Refer to Figure $\ref{Figure: some vertices of F_0}$ (a), consider $\{O,P,\dots,Q,R\}$ and set $a=k_t$ and $b=k_{t-1}$. \\

\begin{enumerate}[(1)]
	\item $G_0=F_0$;\\
	$F_1=(F_0\setminus\{a'\})\cup \{b\}$;\\
	$F=F_1$;\\
	$L=\mathrm{toList}\{G_0,F_1\}$;\\
	\hspace*{0.3cm}FOR $i$ from $a-2$ to $b+1$ in descending order DO( \\
	\hspace*{0.5cm}$F=(F\setminus \{i\})\cup\{i'\} $;\\
	\hspace*{0.5cm}$L=\mathrm{append}(L,\{F\})$;\\
	\hspace*{0.3cm});\\
	Denote by $G_1$ the last facet in the list $L$;\\
	
	\item $F=(F_1\setminus\{(a-1)'\})\cup\{a-1\}$;\\
	$L=\mathrm{append}(L,\{F\})$;\\
	\hspace*{0.3cm}FOR $i$ from $a-2$ to $b+1$ in descending order DO(\\
	\hspace*{0.5cm}$F=(F\setminus \{i\})\cup\{i'\} $;\\
	\hspace*{0.5cm}$L=\mathrm{append}(L,\{F\})$;\\
	\hspace*{0.3cm});\\
	Denote by $G_2$ the last facet in the list $L$;\\
\end{enumerate}

		\noindent\textit{Second Step.} Now, consider an arrangement such as $\{N,\dots,K,O,\dots,R\}$ in Figure \ref{Figure: some vertices of F_0} (a), if it exists. \\

		\noindent	FOR $j$ from $0$ to $\vert L\vert$ DO(\\
		\hspace*{0.3cm}$H_j$ is the $j$-th facet in the list $L$;\\
		\hspace*{0.5cm}IF $H_j\neq G_1,G_2$ THEN\\
		\hspace*{0.7cm}Apply \textit{First Step} (1) replacing $F_0$ with $H_j$ and setting $a=k_{t-1}$ and $b=k_{t-2}$. Hence we obtain a list $L'$.\\
		\hspace*{0.7cm}$L=\mathrm{join}(L,L')$;\\
		\hspace*{0.5cm}ELSE Apply \textit{First Step} (1) and (2) replacing $F_0$ with $H_j$ and setting $a=k_{t-1}$ and $b=k_{t-2}$. Hence we obtain a list $L'$.\\
		\hspace*{0.7cm}$L=\mathrm{join}(L,L')$;\\
		\hspace*{0.5cm});\\

		\noindent Therefore, we obtain a new list $L$ from \textit{Second Step} (look at Example \ref{Example}). Now, for all facet in $L$, we can apply the \textit{First Step} to an arrangement (if it exists) as a reflected $\{N,\dots,K,O,\dots,R\}$ in Figure \ref{Figure: some vertices of F_0} (a) and $a=k_{t-2}$ and $b=k_{t-3}$. Finally, we get a new list $L$ of all facets of $\Delta(\cP)$ that have a left or right step in $\{D,E\dots,Q,R\}$. This concludes \textit{Procedure 1.}\\

\textit{\underline{Extension to further arrangements.}} Consider $\cC_2$ and perform suitable rotations or reflections of $\cP$ so that $\cC_2$ is positioned as in Figure \ref{Figure: some vertices of F_0}. \\

	\noindent FOR $i$ from $0$ to $\vert L\vert$ DO(\\
	Denote by $H_i$ the $i$-th facet in $L$;\\
	\hspace*{0.3cm}IF $H_i$ contains $j-1$ as in Figure \ref{Figure: some vertices of F_0} THEN(\\
	\hspace*{0.3cm}Apply \textit{Procedure 1.} in $\cC_2$ to $H_i$;\\
	\hspace*{0.3cm}We get a list $L'$, where some sets are not facets because they have $j-1$ and $m_1$;\\
	\hspace*{0.3cm}Denote by $H_i^{(1)},\dots, H_i^{(l)}$ the sets in $L'$ which contain $m_1$;\\
	\hspace*{0.6cm} FOR $k$ from $0$ to $l$ DO(\\
    \hspace*{0.6cm} $L'=\mathrm{delete}(H_i^{(k)}, L')$;\\
    \hspace*{0.6cm} );\\ 
    \hspace*{0.3cm}$L=\mathrm{join}(L,L');$\\
    \hspace*{0.3cm});\\
    
    \noindent \hspace*{0.3cm}IF $H_i$ does not contain $j-1$ as in Figure \ref{Figure: some vertices of F_0} THEN(\\
    \hspace*{0.3cm}Apply \textit{Procedure 1.} in $\cC_2$ to $H_i$;\\
    \hspace*{0.3cm}We get a list $L'$ of facets;\\
    \hspace*{0.5cm});\\ 
    $L=\mathrm{join}(L,L');$\\
    );\\
	
\noindent Finally, we get a new list $L$ of the facets of $\Delta(\cP)$ that have a left, a right or an upper step in $\cC_2$. \\

\textit{\underline{Iterative process for remaining arrangements.}} This process can be repeated for $\cC_3$, meaning that we consider the list $L$ and apply the previously described procedure to each facet of $L$ on $\cC_3$. This continues until $\cC_s$, resulting in a new list $L$ of sets that are not all facets of $\Delta(\cP)$. In particular, consider the common part of $\cC_s$ and $\cC_1$, especially the sequence ${A_{n-1}, A_n, A_1}$. If we refer to Figure \ref{Figure: some vertices of F_0}, we have $B = A_{n-1}$, $C = A_{n}$, $D = A_1$, and $j-1$ and $m_1$ labelled by $n-1$ and $1$, respectively. There are some sets in $L$ that simultaneously contain $1$ and $n-1$. We simply need to remove these sets, obtaining a list $L$ which contains all the facets of $\Delta(\cP)$.\\ 
\end{definition}

\begin{example}\rm\label{Example}
	Here, we give an example of the order $<$ on $\cF(\cP)$ that we define earlier. Let $\cP$ be the closed path in Figure \ref{Figure: Example order facets} and let $F_0=\{1',\dots,26'\}$, indicated by the orange vertices. We denote $\cC_1=\{A_{24},A_{25},A_{26},A_1,\dots,A_6\}$, $\cC_2=\{A_2,\dots,A_{14}\}$, $\cC_3=\{A_{10},\dots,A_{18}\}$ and $\cC_4=\{A_{14},\dots,A_{26},A_1,A_2\}$.
	
	\begin{figure}[h]
		\centering
		\includegraphics[scale=0.65]{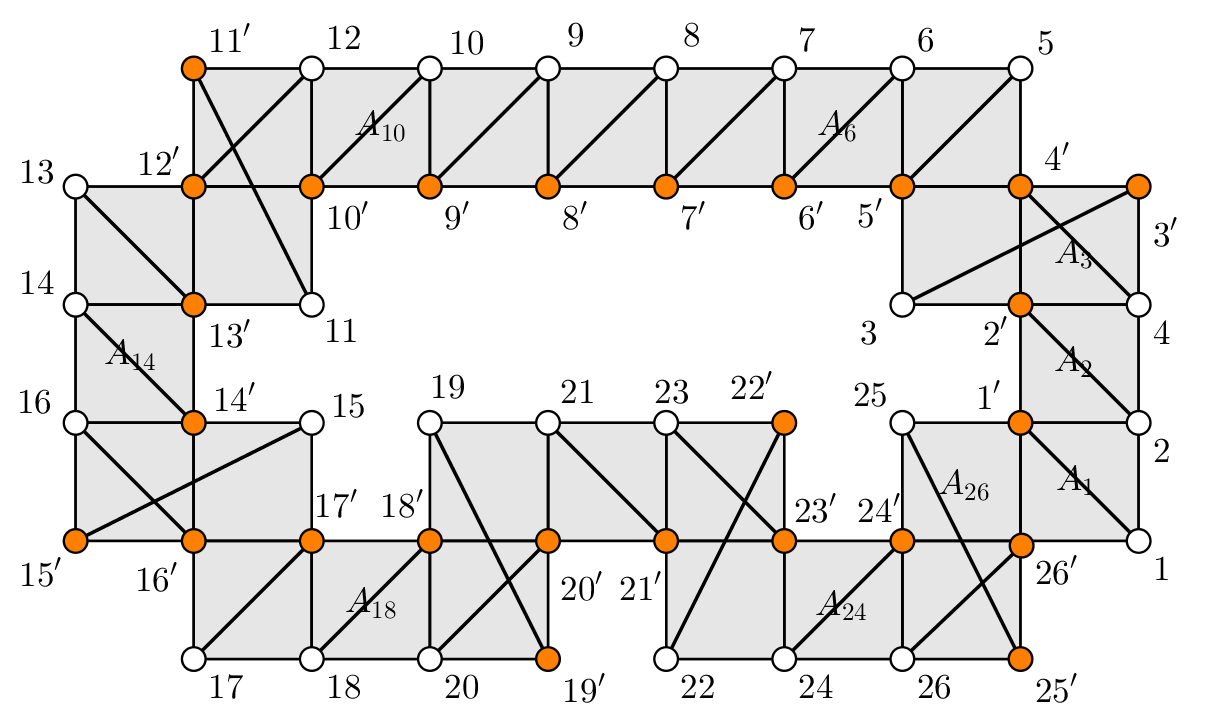}
		\caption{A closed path with zig-zag walks.}
		\label{Figure: Example order facets}
	\end{figure}

	 \noindent Applying Definition \ref{Defn: lex order}, we get the following facets: 
	\begin{itemize}
	\item $F_1=\{1',2',3',4,5',\dots,26'\}$ with the right step $\{2',4,3'\}$,  $F_2=\{1',2,3',4,5',\dots,26'\}$ with the right step $\{1',2,4\}$ and  $F_3=\{1,2,3',4,5',\dots,26'\}$ with the right step $\{26',1,2\}$ in the First Step; 
	\item $F_4=\{1',2',3,4,5',\dots,26'\}$  with the left step $\{2',3,5'\}$, $F_5=\{1',2,3,4,5',\dots,26'\}$ with the left step $\{2',3,5'\}$ and right step $\{1',2,4\}$, $F_6=\{1,2,3,4,5',\dots,26'\}$ with the left step $\{2',3,5'\}$ and right step $\{26',1,2\}$, in the Second Step.	
	\end{itemize}
 Therefore, $L=(F_0,\dots,F_6)$. Now, consider $\cC_2$ and for any facet in $L$ we apply the previous procedure on $\cC_2$. That is,
 \begin{itemize}
	\item from $F_0$, we get: $F_7=\{1',\dots,10',11',12,13',\dots,26'\}$ with the right step $\{10',12,11'\}$, $F_8=\{1',\dots,9',10,11',12,13',\dots,26'\}$ with the right step $\{9',10,12\}$, $F_{9}=\{1',\dots,8',9,10,11',12,13',\dots,26'\}$ with the right step $\{8',9,10\}$, and so on, up to $ F_{13}=\{1',\dots,4',5,6,7,8,9,10,11',12,13',\dots,26'\}$ with the right step $\{4',5,6\}$. 
	\item Taking $F_1$, we have: $F_{14}=\{1',2',3',4,5'\dots,10',11',12,13',\dots,26'\}$ with the right steps $\{2',4,3'\}$ and $\{10',12,11'\}$, $F_{15}=\{1',2',3',4,5',\dots,9',10,11',12,13',\dots,26'\}$ with the right steps $\{2',4,3'\}$ and $\{9',10,12\}$, $F_{16}=\{1',2',3',4,5',\dots,8',9,10,11',12,13',\dots,26'\}$ with the right steps $\{2',4,3'\}$ and $\{8',9,10\}$, and so on, up to, $F_{20}=\{1',2',3',4,\dots,10,11',12,13',\dots,26'\}$ with the right steps $\{2',4,3'\}$ and $\{4',5,6\}$. 
	
	\item Considering $F_2$, we have: $F_{21}=\{1',2,3',4,5'\dots,10',11',12,13',\dots,26'\}$ with the right steps $\{1',2,4\}$ and $\{10',12,11'\}$, $F_{22}=\{1',2,3',4,5'\dots,8',9,10,11',12,13',\dots,26'\}$ with the right steps $\{1',2,4\}$ and $\{8',9,10\}$, and so on, up to, $F_{27}=\{1',2,3',4,\dots,10,11',12,13',\dots,26'\}$ with the right step $\{1',2,4\}$ and the upper step $\{1',5,6\}$. 
	\item We repeat this argument for $F_3$ and $F_4$; in particular, from $F_4$, we obtain: $F_{35}=\{1',2',3,4,5'\dots,10',11',12,13',\dots,26'\}$  with the right step $\{10',12,11'\}$ and the left step $\{2',3,5'\}$, $F_{36}=\{1',2',3,4,5',\dots,9',10,11',12,13',\dots,26'\}$ with the right step $\{10',12,11'\}$ and the left step $\{2',3,5'\}$,
	$F_{37}=\{1',2',3,4,5',\dots,8',9,10,11',12,13',\dots,26'\}$ with the right step $\{8',9,10'\}$ and the left step $\{2',3,5'\}$,
	and so on, up to,
	$F_{41}=\{1',2',3,4,5,\dots,10,11',12,13',\dots,26'\}$, but $F_{41}$ is not a facet because $\{3,5\}\subset F_{41}$, so we do not include it in the list. Similar arguments can apply for $F_5$, $F_6$ and $F_7$. 
 \end{itemize}
Therefore, we obtain a list $L=(F_0,\dots,F_{58})$ of facets of $\Delta(\cP)$. Now, consider $\cC_3$ and rotate it in order that $\cC_3$ is positioned as in Figure \ref{Figure: some vertices of F_0} (a). For all facet in $L$ from $F_0$ to $F_{58}$ we apply the previous arguments, obtaining a new list $L$ with $p$ facets, for some $p\in \NN$. Finally, repeat the same process for $\cC_4$ and for any facets in $L$. We now point out the procedure described in the last paragraph of \ref{Defn: lex order}. For instance, consider $F_3$ and we have to replace $26'$ with $26$. Then, for some $q>p$, we get $F_q=\{1,2,3',4,5',\dots,25',26\}$ with the upper step $\{24',1,2\}$ and right step $\{24',26,25'\}$, $F_{q+1}=\{1,2,3',4,5',\dots,23',24,25',26\}$ with the upper step $\{23',1,2\}$ and $\{23',24,26\}$; continuing, we have $F_{q+2}=\{1,2,3',4,5',\dots,24',25,26\}$ which is not a facet since $1,25\in F_{q+2}$ so we do not include it in the list. In the end, we obtain the desired list $L$.\\	
\end{example}

\begin{definition}\label{Defn: facets compare}\rm 
	Let $\cP$ be a closed path polyomino with a zig-zag walk, $\prec_{\cP}$ be a monomial order provided in Definition \ref{Defn: lex order}, $\Delta(\cP)$ be the simplicial complex attached to $\cP$ with respect to $\prec_{\cP}$ and $L$ be the order list of the facets of $\Delta(\cP)$ given in Definition \ref{Defn: lex order}. Let $F,G$ be two facet of $\Delta(\cP)$. Then there exist two indices $i,j$ with $i\neq j$ such that $F$ and $G$ are the $i$-th and the $j$-th facets in the list $L$, respectively. We say that $F>G$ (or $G<F$) if $i>j$.\\
\end{definition}

	For example, with reference to Figure \ref{Figure: exa of a facet}, it is easy to verify that $F>G$, since $F$ and $G$ are respectively defined starting from $F_2$ and $F_3$ in the list $L$ provided in Example \ref{Example}. \\

From now, the word \textit{step} includes the lower right, the lower left and the upper ones. Moreover, for simplicity, a vertex which is a right lower corner (or similarly, a lower-left or upper corner) of a step of a facet of $\Delta(\cP)$ is referred to as a \textit{corner of a step}. Keeping these notation in mind, we state the following result.\\

\begin{theorem}\label{Thm: shelling}
		Let $\cP$ be a closed path, $\prec_{\cP}$ be a monomial order provided in Definition \ref{Defn: lex order}, $\Delta(\cP)$ be the simplicial complex attached to $\cP$ with respect to $\prec_{\cP}$. Then $\Delta(\cP)$ is shellable. \\
		Suppose that $\cP$ has a zig-zag walk. Let $L$ be the order list of the facets of $\Delta(\cP)$ given in Definition \ref{Defn: lex order} and $\vert L\vert=l$. Then $\langle F_0,\dots,F_{i-1} \rangle \cap \langle F_i\rangle$ is generated by $$ \big\{F_i\setminus\{v\} : v\ \text{is a corner of a step of}\ F_i\big\},$$
		for all $i=1,\dots, l$.
\end{theorem}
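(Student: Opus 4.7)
The plan splits according to whether $\cP$ contains a zig-zag walk. If $\cP$ has no zig-zag walk, then by Corollary \ref{Coro: radicality + CM closed path no zig-zag} the ideal $I_\cP$ is toric, and Proposition \ref{Prop: Grobner basis} supplies a squarefree quadratic Gr\"obner basis. Thus $\Delta(\cP)$ is the Stanley--Reisner complex of a squarefree initial ideal of a toric ideal, and shellability follows at once from \cite[Theorem 9.6.1]{Villareal}. I would dispatch this case in a single line.

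The substantial work is the case where $\cP$ has a zig-zag walk. Here I would argue that the explicit list $L = (F_0, F_1, \ldots, F_l)$ produced by Definition \ref{Shelling order} is a shelling of $\Delta(\cP)$. By Proposition \ref{Prop: dimension + no cone point} every facet has exactly $d = \vert V(\cP)\vert/2$ vertices, so what remains is the combinatorial identity
\begin{equation*}
\langle F_0,\ldots,F_{i-1}\rangle \cap \langle F_i\rangle \;=\; \bigl\langle F_i\setminus\{v\} : v \text{ is a corner of a step of } F_i\bigr\rangle
\end{equation*}
for each $i\geq 1$. I would split this into: \textbf{(a)} for every corner $v$ of a step of $F_i$, some $F_j$ with $j<i$ contains $F_i\setminus\{v\}$; and \textbf{(b)} for every $j<i$, some corner $v$ of a step of $F_i$ is missing from $F_j$.

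Statement (a) I would verify by unwinding Definition \ref{Shelling order}: every facet appended to $L$ after $F_0$ is obtained from an already-present facet by swapping exactly one vertex, and this swap creates (or modifies) a step whose corner is precisely the newly introduced vertex. Hence, given a corner $v$ of a step of $F_i$, I can locate the replacement in the procedure that introduced $v$ and take $F_j$ to be the facet appearing immediately before that replacement; then $F_i \setminus F_j = \{v\}$. The case distinction for right, left and upper steps mirrors the three routines described in Discussion \ref{Discussion} together with the wraparound cases from the final paragraph of Definition \ref{Shelling order}.

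Statement (b) is the main obstacle and is where the subtlety of the order $L$ comes into play. The guiding intuition is that $F_0$ has no steps, and each step of $F_i$ records a ``modification'' performed by the procedure between $F_0$ and $F_i$. Along the route in $L$ from $F_j$ to $F_i$, the most recent modification that has not been undone produces a vertex of $F_i\setminus F_j$ which is a corner of a step of $F_i$. Making this precise requires a careful case analysis across consecutive cell arrangements $\cC_1,\ldots,\cC_s$, distinguishing according to which stage (\emph{First Step}, \emph{Second Step}, or \emph{Extension to further arrangements}) was active when $F_j$ and $F_i$ first diverged, and handling the exclusion of non-face candidates in the wraparound between $\cC_s$ and $\cC_1$ described in the last paragraph of Definition \ref{Shelling order}. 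Once (a) and (b) are established, $L$ is a shelling of $\Delta(\cP)$ with the announced restriction datum, completing the proof.
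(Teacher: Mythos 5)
Your proposal takes essentially the same route as the paper: the toric case is dispatched via \cite[Theorem 9.6.1]{Villareal}, and the zig-zag case verifies that the explicit list $L$ from Definition \ref{Shelling order} satisfies the shellability criterion of \cite[Definition 5.1.11]{Bruns_Herzog}, deferring the case-by-case combinatorics to Discussion \ref{Discussion}. Your split into conditions (a) and (b) makes the logical structure more explicit than the paper's single combined statement (that for each $j<i$ there is a step corner $w\in F_i\setminus F_j$ with $F_i\setminus F_k=\{w\}$ for some $k<i$), but the substance and the supporting references are the same.
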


\begin{proof}
	If $\cP$ has no zig-zag walks, then from \cite[Theorem 6.2]{Cisto_Navarra_closed_path} and Proposition \ref{Prop: Grobner basis} we have that $I_{\cP}$ is a toric ideal whose initial ideal with respect to $\prec_{\cP}$ is squarefree, so $\Delta_{\cP}$ is a shellable from \cite[Theorem 9.6.1]{Villareal}.\\
	Suppose that $\cP$ has a zig-zag walk. Let $i,j\in \{1,\dots,l\}$ with $j<i$. From Discussion \ref{Discussion} and Definitions \ref{Defn: lex order} and \ref{Defn: steps} we have that there exist either a lower right or a lower left corner or an upper corner, let us say $w$, of a step of $F_i$ such that $w\in F_i\setminus F_j$ and an integer $k<i$ such that $F_{i}\setminus F_k=\{w\}$. In conclusion, looking at \cite[Definition 5.1.11]{Bruns_Herzog}, we have that $\Delta_{\cP}$ is shellable and $\langle F_0,\dots,F_{i-1} \rangle \cap \langle F_i\rangle$ is generated by the faces $F_i\setminus\{v\}$, where $v$ is either the lower right or the lower left or the upper corner of a step of $F_i$.
\end{proof}

\begin{corollary}\label{Coro: Cohen-Macaulay}
	Let $\cP$ be a closed path with a zig-zag walk. Then $K[\cP]$ is a Cohen-Macaulay ring and $\dim(K[\cP])=\vert \cP\vert$.  
\end{corollary}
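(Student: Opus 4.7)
The plan is to combine the shellability of $\Delta(\cP)$ established in Theorem \ref{Thm: shelling} with its purity from Proposition \ref{Prop: dimension + no cone point}, and then transport the resulting Cohen-Macaulay property back from $K[\Delta(\cP)]=S_\cP/\mathrm{in}(I_\cP)$ to $K[\cP]=S_\cP/I_\cP$ via the Gr\"obner degeneration provided by Proposition \ref{Prop: Grobner basis}.

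First, I would observe that Theorem \ref{Thm: shelling} yields that $\Delta(\cP)$ is shellable, while Proposition \ref{Prop: dimension + no cone point} guarantees that it is pure of dimension $d-1$ with $d=|V(\cP)|/2$. By the standard fact that a pure shellable simplicial complex is Cohen-Macaulay (see \cite[Theorem 5.1.13]{Bruns_Herzog}), its face ring $K[\Delta(\cP)]$ is a Cohen-Macaulay ring of Krull dimension $d$, the last equality following from \cite[Corollary 6.3.5]{Villareal}. Since $|V(\cP)|=2|\cP|$ for any closed path polyomino (recalled in Section \ref{Section: Grobner basis}), we get $d=|\cP|$.

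Next, I would lift Cohen-Macaulayness from $S_\cP/\mathrm{in}(I_\cP)$ to $S_\cP/I_\cP$. This is the classical consequence of the relations $\dim S_\cP/I_\cP=\dim S_\cP/\mathrm{in}(I_\cP)$ and $\mathrm{depth}\,S_\cP/I_\cP\geq \mathrm{depth}\,S_\cP/\mathrm{in}(I_\cP)$ available for any Gr\"obner degeneration: when the initial quotient has equal depth and dimension, so does the original one. Hence $K[\cP]$ is Cohen-Macaulay, and $\dim K[\cP]=\dim S_\cP/\mathrm{in}(I_\cP)=|\cP|$, as claimed. No real obstacle is expected here, since all the serious combinatorial work has already been carried out in Theorem \ref{Thm: shelling}; the only point requiring care is a correct citation for the preservation of Cohen-Macaulayness through a squarefree Gr\"obner degeneration.
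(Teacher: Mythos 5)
Your argument is correct and follows essentially the same route as the paper: combine purity from Proposition \ref{Prop: dimension + no cone point} and shellability from Theorem \ref{Thm: shelling} to get Cohen-Macaulayness of $K[\Delta(\cP)]$ via \cite[Theorem 5.1.13]{Bruns_Herzog}, then transfer to $K[\cP]$ through the Gr\"obner degeneration. The paper simply compresses the transfer step into a citation of \cite[Theorem 2.19 (b)--(c)]{binomial ideals}, which is precisely the dimension-equality and depth-inequality argument you spell out.
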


\begin{proof}
	From Proposition \ref{Prop: dimension + no cone point} and Theorem \ref{Thm: shelling}, we have that $K[\Delta(\cP)]$ is a $(d-1)-$dimensional shellable simplicial complex, where $d=\vert V(\cP)\vert/2=\vert \cP\vert$. From \cite[Theorem 5.1.13]{Bruns_Herzog} and \cite[Theorem 2.19 (b)-(c)]{binomial ideals}, we get the claim. 
\end{proof}

\section{Rook polynomial of closed paths and weakly closed paths} \label{Section: Rook polynomial - final}

In this section, we explore the relationship between the $h$-polynomial and the rook polynomial of a polyomino. Firstly, we introduce some basics regarding the Hilbert-Poincar\'{e} series of a graded $K$-algebra $R/I$. Consider a graded $K$-algebra $R$ and an homogeneous ideal $I$ of $R$. $R/I$ has a natural structure of graded $K$-algebra as $\bigoplus_{k\in\mathbb{N}}(R/I)_k$. The \textit{Hilbert-Poincar\'e series} of $R/I$ is the formal series $\rHP_{R/I}(t)=\sum_{k\in\mathbb{N}}\dim_{K} (R/I)_kt^k$. According to the celebrated Hilbert-Serre Theorem, there exists a unique polynomial $h(t)\in \mathbb{Z}[t]$, called \textit{h-polynomial} of $R/I$, such that $h(1)\neq0$ and $\rHP_{R/I}(t)=\frac{h(t)}{(1-t)^d}$, where $d$ is the Krull dimension of $R/I$. Recall that if $R/I$ is Cohen-Macaulay then $\mathrm{reg}(R/I)=\deg h(t)$. For simplicity, if $\cP$ is a polyomino, we denote the $h$-polynomial of $K[\cP]$ by $h_{K[\cP]}(t)$ and it is sometimes referred to as the \textit{h-polynomial of $\cP$}.\\
We will now describe a combinatorial interpretation of the $h$-polynomial for a closed path with a zig-zag walk, which follows from Theorem \ref{Thm: shelling} and the McMullen-Walkup Theorem (see \cite[Corollary 5.1.14]{Bruns_Herzog}).\\

\begin{corollary}\label{Coro: Walkup-McMullen}
	Let $\cP$ be a closed path with a zig-zag walk. Then the $i$-th coefficient of the $h$-polynomial of $K[\cP]$ is equal to the number of the facets of  $\Delta(\cP)$ having $i$ steps. \\
\end{corollary}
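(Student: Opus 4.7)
The plan is to interpret the $h$-polynomial of $K[\cP]$ through the shelling of $\Delta(\cP)$ and then apply the McMullen-Walkup formula. The first step is to reduce the computation from $K[\cP]$ to the Stanley-Reisner ring $K[\Delta(\cP)]$. Since $\prec_{\cP}$ is a Gr\"obner order for $I_{\cP}$ by Proposition \ref{Prop: Grobner basis} and its initial ideal $\lt(I_{\cP})$ is the Stanley-Reisner ideal of $\Delta(\cP)$, the Hilbert--Poincar\'e series of $K[\cP]$ and of $K[\Delta(\cP)]$ coincide; in particular, they share the same $h$-polynomial. Moreover, by Corollary \ref{Coro: Cohen-Macaulay} the ring $K[\cP]$ is Cohen-Macaulay of Krull dimension $|\cP|$, which matches $\dim K[\Delta(\cP)] = d$ from Proposition \ref{Prop: dimension + no cone point}.

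Next, I would invoke the McMullen-Walkup Theorem (\cite[Corollary 5.1.14]{Bruns_Herzog}) applied to the shelling $F_0, F_1, \ldots, F_l$ of $\Delta(\cP)$ provided by Theorem \ref{Thm: shelling}. For each $i\geq 1$, the restriction face $R(F_i)$ is defined as the unique minimal face of $F_i$ not belonging to $\langle F_0, \ldots, F_{i-1}\rangle$, and the McMullen-Walkup formula gives
\[
h_{K[\cP]}(t) \;=\; \sum_{i=0}^{l} t^{|R(F_i)|},
\]
with the convention $R(F_0)=\emptyset$. Hence the $i$-th coefficient of $h_{K[\cP]}(t)$ equals $\#\{\, j : |R(F_j)| = i\,\}$.

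The key computation is then to identify $R(F_i)$ explicitly. By Theorem \ref{Thm: shelling}, the subcomplex $\langle F_0, \ldots, F_{i-1}\rangle \cap \langle F_i\rangle$ is generated by the facets $F_i\setminus\{v\}$ where $v$ ranges over the corners of the steps of $F_i$. Therefore the minimal face of $F_i$ avoiding every $F_i\setminus\{v\}$ is precisely the set
\[
R(F_i) \;=\; \{\, v \in F_i : v \text{ is a corner of a step of } F_i\,\},
\]
so $|R(F_i)|$ equals the number of step-corners of $F_i$.

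Finally, to close the argument one must observe that step-corners are in bijection with steps of $F_i$: from Definition \ref{Defn: steps}, every step (right, left, or upper) determines a unique corner vertex, and conversely the type and shape of the step is uniquely reconstructed from its corner together with the geometry of $\cP$ and the orange/black pattern of $F_i$ described in Discussion \ref{Discussion}. This injectivity check is the only delicate point, and I expect it to be the main obstacle; it should be handled by a small case analysis across the configurations in Figure \ref{Figure: some vertices of F_0} and the step types of Definition \ref{Defn: steps}. Combining this bijection with the displayed formula yields that the $i$-th coefficient of $h_{K[\cP]}(t)$ counts exactly the facets of $\Delta(\cP)$ with $i$ steps, proving the corollary.
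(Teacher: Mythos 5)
Your proof is correct and follows essentially the same route the paper has in mind: the paper leaves the corollary as an immediate consequence of Theorem \ref{Thm: shelling} together with the McMullen-Walkup Theorem, and you have correctly filled in the two implicit steps (equality of Hilbert series under Gr\"obner degeneration, and identification of the restriction face $R(F_i)$ with the set of step-corners of $F_i$). The injectivity worry you flag at the end is already packaged into the statement of Theorem \ref{Thm: shelling}: distinct steps of a facet have distinct corners by construction (Definition \ref{Defn: steps} and Discussion \ref{Discussion}), so no further case analysis is needed.
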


Now, let us introduce some definitions and concepts related to the rook polynomial of a polyomino $\cP$. Two rooks in $\cP$ are in \textit{non-attacking position} if they do not belong to the same row or column of cells of $\cP$. A \textit{$k$-rook configuration} in $\cP$ is a configuration of $k$ rooks arranged in $\cP$ in non-attacking positions. The maximum number of rooks that we can place in $\cP$ in non-attacking positions is called the \textit{rook number} and it is denoted by $r(\cP)$. Let $\cR(\cP,k)$ be the set of all $k$-rook configurations in $\cP$ and set $r_k=\vert \cR(\cP,k)\vert $ for all $k\in\{0,\dots,r(\cP)\}$ (conventionally $r_0=1$). The \textit{rook-polynomial} of $\cP$ is the polynomial in $\mathbb{Z}[t]$ defined as $r_{\cP}(t)=\sum_{k=0}^{r(\cP)}r_kt^k$. For example, if $\cP$ is a square tetromino then $r(\cP)=2$ and $r_{\cP}(t)=1+4t+2t^2$. If the readers wish to explore further, they can refer to \cite{L-convessi, Kummini rook polynomial, Parallelogram Hilbert series, romeo, Trento3}. \\

We are interested in interpreting the $h$-polynomial of a closed path with zig-zag walks in terms of its related rook polynomial. 
Let $\cP$ be a closed path containing a zig-zag walk. We define the following map $\phi$ between the set $\cF(\Delta_{\cP})_i$ of the facets of $\Delta(\cP)$ with $i$ steps and the set $\cR_i$ of the $i$-rook configurations of $\cP$ ($i\geq 0$). Let $F\in \cF(\Delta_{\cP})_i$. If $F=F_0$, then $\phi(F)=\emptyset$; otherwise, $\psi(F)=\{R_1,\dots,R_i\}$ where $R_j$ is a rook placed in a step cell of a step of $F$, for $1\leq j\leq i$. See, for instance, Figure \ref{Figure: exa of rook-facet}. 

	\begin{figure}[h]
	\centering
	\subfloat{\includegraphics[scale=0.7]{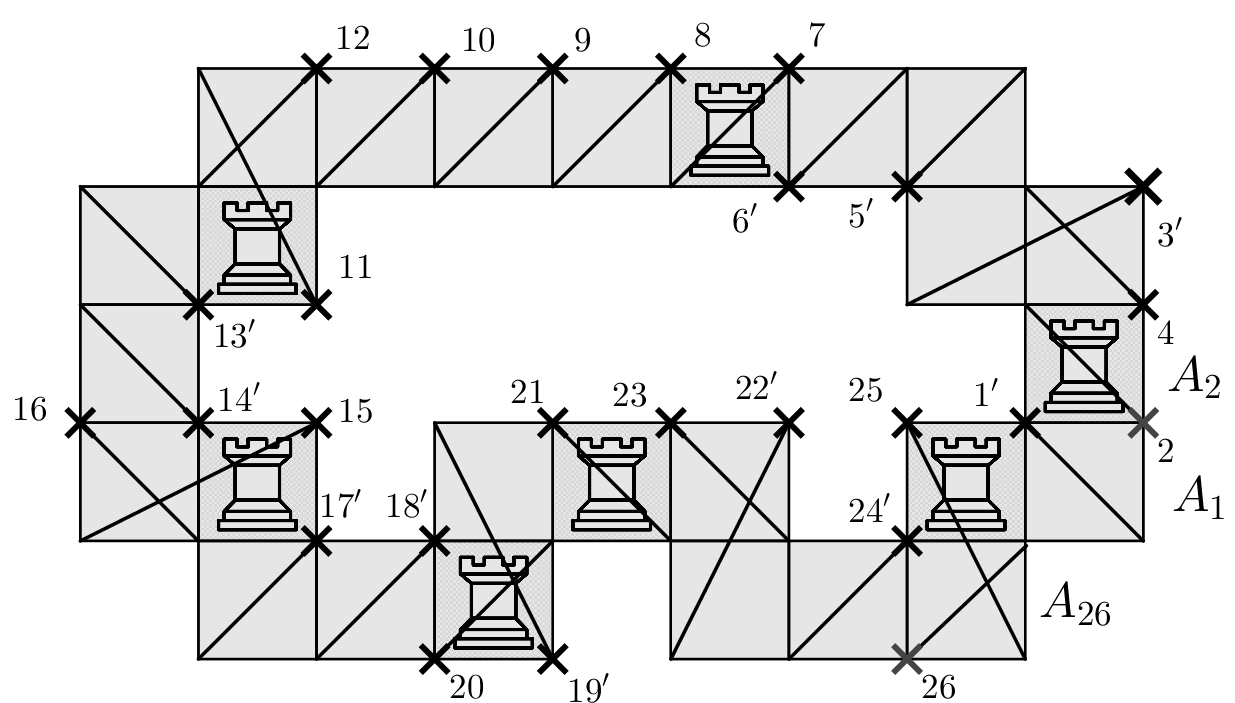}}
	\subfloat{\includegraphics[scale=0.7]{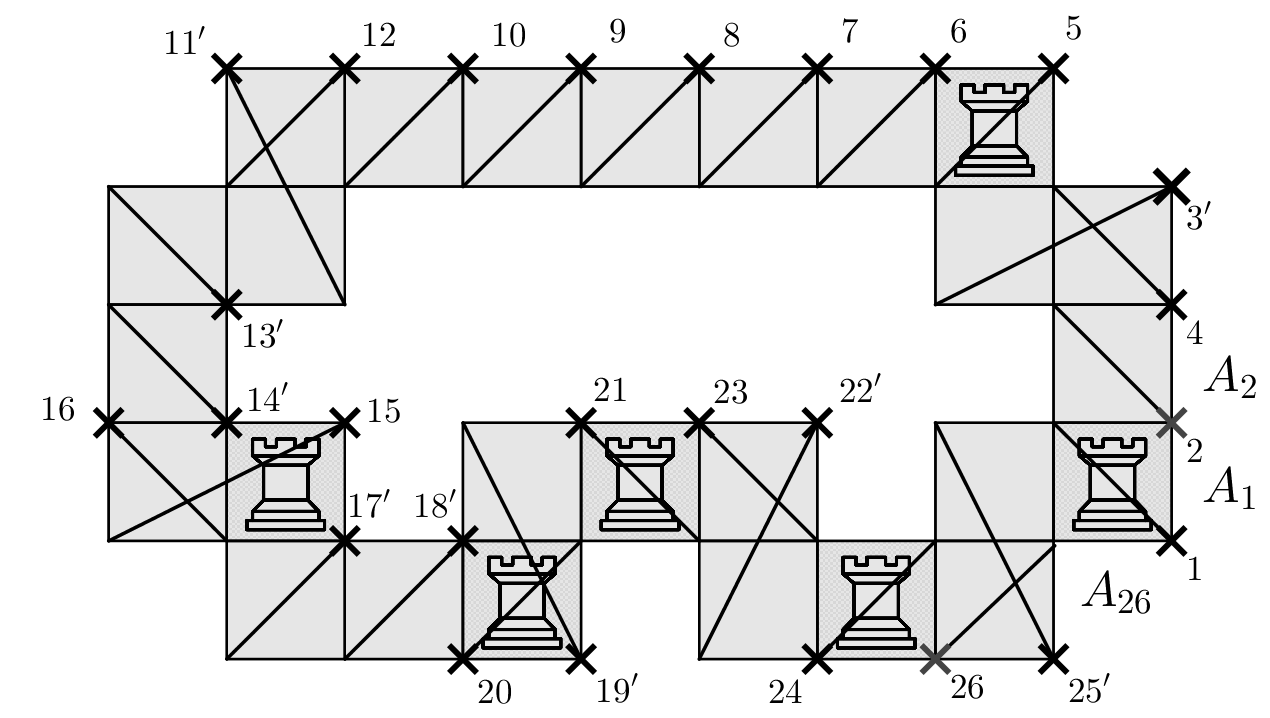}}
	\caption{Example of rook configurations and facets.}
	\label{Figure: exa of rook-facet}
\end{figure}

From Discussion \ref{Discussion}, it easily follow that $\phi$ is bijective. Consequently, the number of the facets of $\Delta(\cP)$ with $i$ steps equals the number of the $i$-rook configurations of $\cP$ (with $i\geq 0$). Therefore, by combining Corollaries \ref{Coro: Cohen-Macaulay} and \ref{Coro: Walkup-McMullen}, we obtain the following result.\\

\begin{proposition}\label{Prop: h-polynomial for zig-zag}
	Let $\cP$ be a closed path with a zig-zag walk. Then the $h_{K[\cP]}(t)$ is equal to the rook-polynomial of $\cP$. In particular, $\mathrm{reg}(K[\cP])$ is the rook number of $\cP$.  \\
\end{proposition}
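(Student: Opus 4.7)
My plan is to deduce the statement as an essentially formal consequence of the results already in place, namely Corollary \ref{Coro: Cohen-Macaulay} (Cohen-Macaulayness of $K[\cP]$), Corollary \ref{Coro: Walkup-McMullen} (the $i$-th coefficient of $h_{K[\cP]}(t)$ counts facets of $\Delta(\cP)$ with $i$ steps), and the map $\phi$ defined in the paragraph preceding the statement. Writing $h_{K[\cP]}(t)=\sum_i h_i t^i$, Corollary \ref{Coro: Walkup-McMullen} gives $h_i = |\cF(\Delta_{\cP})_i|$; once $\phi\colon \cF(\Delta_{\cP})_i \to \cR(\cP,i)$ is established as a bijection, this forces $h_i = r_i$ for every $i\geq 0$, hence $h_{K[\cP]}(t)=r_{\cP}(t)$.

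The bulk of the argument, and the main potential obstacle, is verifying that $\phi$ is genuinely a bijection rather than merely a map. For well-definedness I would show that the step cells associated to distinct steps of one facet $F$ lie in pairwise distinct rows and columns of $\cP$: by Definition \ref{Defn: steps}, each step cell is rigidly positioned near a change of direction within the cell arrangement carrying that step, and Remark \ref{Remark: structure path with zig-zag walk} decomposes $\cP$ into such arrangements in a way that prevents two step cells from sharing a row or a column. For injectivity and surjectivity I would invoke the recursive construction in Discussion \ref{Discussion}: every facet $F\neq F_0$ is reached from $F_0$ through a sequence of single-vertex substitutions, and the Discussion shows that each such substitution introduces exactly one step of $F$ whose step cell records both the substitution and its location within $\cP$; reading this correspondence backward reconstructs $F$ uniquely from any prescribed set of rook positions, providing the inverse of $\phi$.

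Once the equality $h_{K[\cP]}(t) = r_{\cP}(t)$ is in hand, the regularity claim is immediate. By Corollary \ref{Coro: Cohen-Macaulay}, $K[\cP]$ is Cohen-Macaulay, so $\mathrm{reg}(K[\cP]) = \deg h_{K[\cP]}(t)$. Substituting $h_{K[\cP]}(t) = r_{\cP}(t)$ and recalling that $\deg r_{\cP}(t) = r(\cP)$ by definition of the rook number yields $\mathrm{reg}(K[\cP]) = r(\cP)$, completing the proposition. Thus the only nontrivial step left after the preliminaries is a careful but routine verification of the bijectivity of $\phi$, which is entirely controlled by the shape-by-shape description of facets developed in Discussion \ref{Discussion}.
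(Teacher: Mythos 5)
Your proposal follows precisely the paper's own route: define the map $\phi$ sending facets with $i$ steps to $i$-rook configurations, argue bijectivity via Discussion \ref{Discussion} and the structural decomposition of $\cP$, then combine Corollary \ref{Coro: Walkup-McMullen} for the coefficient identity and Corollary \ref{Coro: Cohen-Macaulay} for the regularity statement. This matches the paper's proof exactly.
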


 In conclusion, we achieve the following outcome that covers the entire class of closed paths. \\

\begin{theorem}\label{Thm: rook polynomial of ALL closed paths }
	Let $\cP$ be a closed path. Then the $h_{K[\cP]}(t)$ is equal to the rook-polynomial of $\cP$. In particular, $\mathrm{reg}(K[\cP])$ is the rook number of $\cP$. 
\end{theorem}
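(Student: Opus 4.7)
The plan is to split the argument into two exhaustive cases according to whether $\cP$ admits a zig-zag walk, since this is precisely the dichotomy separating the prime/toric situation from the non-prime one (see \cite[Section 6]{Cisto_Navarra_closed_path}). If $\cP$ contains a zig-zag walk, then the statement is exactly Proposition~\ref{Prop: h-polynomial for zig-zag}, so there is nothing to add beyond citing it, and the equality $\mathrm{reg}(K[\cP])=r(\cP)$ follows because $K[\cP]$ is Cohen-Macaulay by Corollary~\ref{Coro: Cohen-Macaulay} and hence $\mathrm{reg}(K[\cP])=\deg h_{K[\cP]}(t)=\deg r_{\cP}(t)=r(\cP)$.

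The remaining case is when $\cP$ has no zig-zag walk. Here $I_{\cP}$ is toric by \cite[Theorem 6.2]{Cisto_Navarra_closed_path} and $K[\cP]$ is a normal Cohen-Macaulay domain with $\dim K[\cP]=|\cP|$ (Corollary~\ref{Coro: radicality + CM closed path no zig-zag}). I would proceed in two steps. First, I would invoke Proposition~\ref{Prop: Grobner basis} together with \cite[Theorem 9.6.1]{Villareal} to conclude that the flag simplicial complex $\Delta(\cP)$ is shellable; Proposition~\ref{Prop: dimension + no cone point} can be adapted in the toric setting to show that $\Delta(\cP)$ is pure of dimension $|\cP|-1$, since the same counting argument on the labelling $Y=Y_1\sqcup Y_2$ and on the pairs $\{i,i'\}$ continues to work whether or not zig-zag walks are present. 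Second, I would set up the analogous bijection $\phi$ between facets of $\Delta(\cP)$ having $i$ ``steps'' and $i$-rook configurations of $\cP$; since the Discussion~\ref{Discussion} is based on the local cell arrangements of Figure~\ref{Figure: some vertices of F_0}, one verifies that the same step/rook correspondence holds on each maximal block of cells even in the absence of a zig-zag walk. Combining this bijection with the McMullen-Walkup theorem \cite[Corollary 5.1.14]{Bruns_Herzog} applied to the shelling yields $h_{K[\cP]}(t)=r_{\cP}(t)$, and the regularity statement then follows as in the first case.

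The main obstacle I anticipate is the second step in the non-zig-zag case: the shelling produced by \cite[Theorem 9.6.1]{Villareal} is not a priori the one given in Definition~\ref{Shelling order}, so its restriction faces are not immediately described via ``steps''. A cleaner route would be to sidestep the direct bijection and argue as follows. Because the $h$-polynomial is a numerical invariant of $K[\cP]$ and $I_{\cP}$ shares its Hilbert series with $\mathrm{in}(I_\cP)$, the coefficients of $h_{K[\cP]}(t)$ are determined by $\Delta(\cP)$ regardless of the shelling chosen; therefore I can use the order of Definition~\ref{Shelling order} (which is still a valid linear order on $\cF(\Delta(\cP))$), check that in the no-zig-zag regime the same inductive construction of Discussion~\ref{Discussion} produces a shelling whose restriction faces are indexed by steps, and then apply McMullen-Walkup. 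This reduces the non-zig-zag case to the same combinatorial core as the zig-zag case, with the simplification that no extra vertex-replacement obstructions of the type highlighted in Discussion~\ref{Discussion}(2)--(4) arise; verifying this last point carefully is where I would devote most of the work.
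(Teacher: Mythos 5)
Your case split and the treatment of the zig-zag case exactly match the paper: when $\cP$ has a zig-zag walk, the statement is Proposition~\ref{Prop: h-polynomial for zig-zag}, and the regularity identification via Cohen-Macaulayness is the standard consequence. The divergence is in the case without a zig-zag walk. The paper disposes of it in one line by citing an already-established result (\cite[Theorem 5.5]{Cisto_Navarra_Hilbert_series}), whereas you attempt to re-derive it inside the present framework via shellability and the step/rook bijection.

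That re-derivation has a real gap you partially flag yourself but do not close. All of the combinatorial machinery you rely on --- Proposition~\ref{Prop: dimension + no cone point}, Definition~\ref{Defn: steps}, Discussion~\ref{Discussion}, and the shelling order of Definition~\ref{Shelling order} --- is formulated explicitly and exclusively for closed paths \emph{with} a zig-zag walk. This is not a cosmetic hypothesis: Remark~\ref{Remark: structure path with zig-zag walk} shows that a closed path with a zig-zag walk has a very rigid local structure (the cell arrangements of Figure~\ref{Figure: example structure closed path zig-zag}), and Figure~\ref{Figure: some vertices of F_0}, the labelling, the notion of ``step,'' and the vertex-replacement moves in Discussion~\ref{Discussion} all hinge on exactly that structure. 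A closed path without a zig-zag walk instead contains an $L$-configuration or a ladder of at least three steps, which breaks the local pattern these definitions assume. So the assertion that ``the same inductive construction of Discussion~\ref{Discussion} produces a shelling whose restriction faces are indexed by steps'' in the no-zig-zag regime is not a verification detail: it is a new claim requiring its own notion of step, its own dimension/pureness argument, and its own bijection with rook configurations, none of which is supplied. The clean fix is to replace that entire second case with the citation the paper uses, which renders the theorem an immediate corollary of Proposition~\ref{Prop: h-polynomial for zig-zag} and prior work.
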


\begin{proof}
	If $\cP$ has no zig-zag walks then the claim follows from \cite[Theorem 5.5]{Cisto_Navarra_Hilbert_series}. Otherwise, we get the desired conclusion from Proposition \ref{Prop: h-polynomial for zig-zag}.\\
\end{proof}

Actually, the arguments used in the proofs of the results in this work can be extended to the class of \textit{weakly closed paths} (see \cite[Definition 4.1]{Cisto_Navarra_weakly}). If $\cP$ is a weakly closed path, we can perform rotations or reflections of $\cP$ in order that $\{A_n,A_1,A_2\}$ is as in Figure \ref{Figure: weakly}. To apply Algorithm \ref{Algorithm: to define Y}, we need to fix ${1, 2}$ and ${1', 2'}$ as starting points, depending on the position of $A_3$ relative to $A_2$. For instance, in Figure \ref{Figure: weakly} (c), we set $a=3$, $b=3'$, $c=2$ and $d=2'$ if $A_4$ is at North of $A_3$ or $a=2'$, $b=2$, $c=3'$ and $d=3$ if $A_4$ is at West of $A_3$. \\

 \begin{figure}[h!]
	\centering
	\subfloat[]{\includegraphics[scale=0.65]{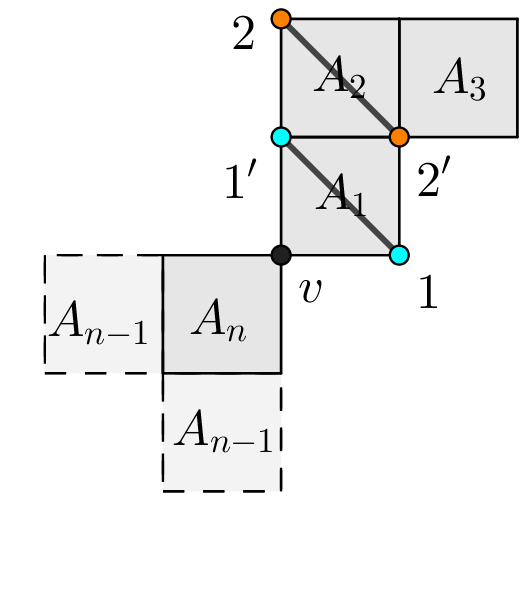}}\quad
	\subfloat[]{\includegraphics[scale=0.65]{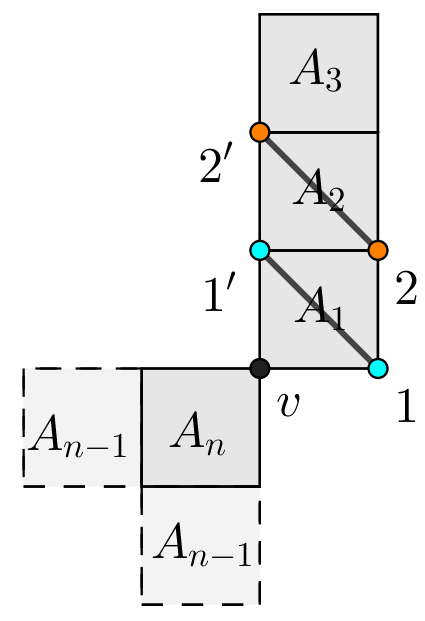}}\quad
	\subfloat[]{\includegraphics[scale=0.65]{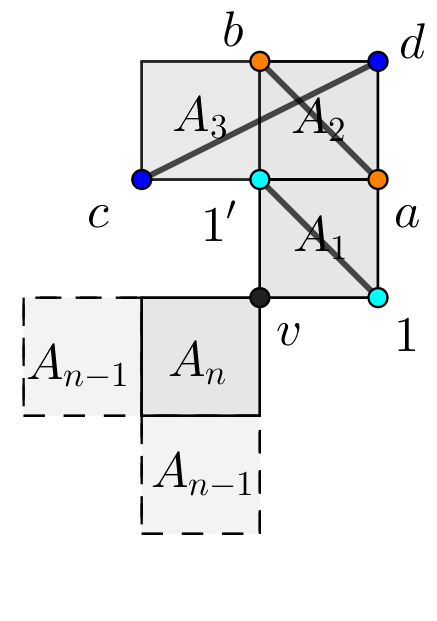}}
	\caption{Hooking parts of a weakly closed path.}
	\label{Figure: weakly}
\end{figure}

It is easy to see that $\vert V(\cP)\vert-1=2\vert \cP\vert$ and that the vertex $v$ may or may not labelled by applying Algorithm \ref{Algorithm: to define Y}, depending on the shape of $\cP$. Therefore, let $Y=Y_1\sqcup Y_2$ be the set of the labels given by Algorithm \ref{Algorithm: to define Y} and $V(\cP)\setminus Y=\{w\}$, where $w$ can be $v$ or a different vertex (note that we can identify the labels of the vertices with the vertices themselves). Take $<_{Y_2}$ an arbitrary total order on $Y_2$ and define the following total order on $\{x_v:v\in V(\cP)\}$. Let $i,j\in V(\cP)$.
\[
x_i <_\cP x_j  \Longleftrightarrow \left \lbrace \begin{array}{l}
	i \in Y_2 \text{ and } j\in Y_1\\
	i,j\in Y_1 \text{ and } i<j \\
	i,j \in Y_2 \text{ and } i<_{Y_2} j\\
	i=w \text{ and } j\neq w 
\end{array}
\right.
\]

\noindent Set by $\prec_{\cP}$ the lexicographic order on $S_{\cP}$ induced by the total order $<_{\cP}$.\\ 
Once the previous definitions are set, we can easily obtain the following result.\\

\begin{theorem}\label{Thm: weakly}
	Let $\cP$ be a weakly closed path polyomino and $\prec_{\cP}$ be the above lexicographic order. Hence:
	\begin{enumerate}
		\item The set of the generators of $I_{\cP}$ forms the reduced (quadratic) Gr\"obner basis of $I_{\cP}$ with respect to $\prec_{\cP}$.
		\item $I_{\cP}$ is a radical ideal and $K[\cP]$ is a Koszul ring.  
		\item If $\cP$ does not contains zig-zag walks, then $K[\cP]$ is a normal Cohen-Macaulay domain with Krull dimension equal to $\vert V(\cP)\vert-\vert\cP\vert$.
		\item $\Delta(\cP)$ is a shellable simplicial complex.
		\item If $\cP$ contains a zig-zag walk, then $K[\cP]$ is a Cohen-Macaulay ring and $\dim(K[\cP])=\vert V(\cP)\vert -\vert \cP\vert$.
		\item $h_{K[\cP]}(t)$ is equal to the rook-polynomial of $\cP$. In particular, $\mathrm{reg}(K[\cP])$ is the rook number of $\cP$. 
		\item $I_{\cP}$ is of K\"onig type.
	\end{enumerate} 
\end{theorem}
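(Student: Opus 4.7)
The plan is to mirror, step by step, the closed path arguments already established in Sections~\ref{Section: Grobner basis}--\ref{Section: Rook polynomial - final}, and to check that each one survives the local modification introduced by the hook in Figure~\ref{Figure: weakly}. The only genuinely new feature of a weakly closed path $\cP$ is the unlabelled vertex $w$ (or the ``extra'' configuration at $\{A_n,A_1,A_2\}$), which forces the identity $|V(\cP)|=2|\cP|+1$. For the Gröbner basis in part (1), I would first observe that $w$ does not sit in the interior of any inner interval producing an $S_{\cP}$-generator: by Algorithm~\ref{Algorithm: to define Y} the rule for $\prec_{\cP}$ places $w$ as the smallest variable, so every $S$-polynomial among the generators of $I_{\cP}$ only involves variables labelled from $Y_1\sqcup Y_2$. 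Hence the casework of Proposition~\ref{Prop: Grobner basis} (Table~\ref{Table2} plus \cite[Lemmas 2, 3, 4]{Cisto_Navarra_CM_closed_path}) applies verbatim to the S-pairs of $\cP$, giving (1). Then (2) is immediate: $\lt(I_{\cP})$ is squarefree so $I_{\cP}$ is radical by \cite[Lemma 6.51]{EHGrobner}, and the quadratic Gröbner basis yields Koszulness by \cite[Theorem 2.28]{binomial ideals}. For (3), when there are no zig-zag walks we invoke the primality result for weakly closed paths from \cite{Cisto_Navarra_weakly}; then $I_{\cP}$ is a lattice ideal of a saturated lattice of rank $|\cP|$, so $\operatorname{ht}(I_{\cP})=|\cP|$ and normal Cohen-Macaulayness follows from the squarefree quadratic initial ideal exactly as in Corollary~\ref{Coro: radicality + CM closed path no zig-zag}.

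For (4) and (5), I would adapt Proposition~\ref{Prop: dimension + no cone point} and Theorem~\ref{Thm: shelling}. Define $F_0$ to be the set of vertices labelled by $Y_2$ together with $w$; since $w$ is the minimal variable and is joined by no quadratic initial relation, $F_0\cup\{w\}$ is a face, and the same counting argument as in Proposition~\ref{Prop: dimension + no cone point} (using $|V(\cP)|=2|\cP|+1$ and the pairing of cells with vertex pairs) shows that $\Delta(\cP)$ is pure of dimension $|V(\cP)|-|\cP|-1=|\cP|$. For the shelling, I would run Definition~\ref{Shelling order} starting from the cell arrangement adjacent to the hook, using the same \textit{First Step}/\textit{Second Step}/\textit{Extension} structure. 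The definitions of right, left and upper step (Definition~\ref{Defn: steps}) carry over without change because they are local to a piece $\cC_i$ that looks like Figure~\ref{Figure: some vertices of F_0}; the hook is treated once at the start and then the iterative process proceeds identically. Hence Discussion~\ref{Discussion} transplants, and Theorem~\ref{Thm: shelling} gives (4), while the McMullen--Walkup formula \cite[Corollary 5.1.14]{Bruns_Herzog} together with $\dim K[\Delta(\cP)]=|\cP|+1$ forces $\dim K[\cP]=|\cP|=|V(\cP)|-|\cP|$, which is (5). Part (6) is then obtained from the same bijection $\phi$ between facets of $\Delta(\cP)$ carrying $i$ steps and $i$-rook configurations of $\cP$, since the step cells are local data and the hook contributes no rook-relevant cell; the equality $h_{K[\cP]}(t)=r_{\cP}(t)$ and $\operatorname{reg}(K[\cP])=r(\cP)$ follow.

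For (7), the König type property follows by pairing each generator $f=x_ax_b-x_cx_d$ of $I_{\cP}$ with its initial monomial $\lt(f)=x_ax_b$: the quadratic squarefree Gröbner basis gives a regular sequence of initial forms of length $\operatorname{ht}(I_{\cP})=|\cP|$, matching the number of generators of $I_{\cP}$, which is exactly the König type condition in the sense of \cite{Cisto_Navarra_weakly}. The main obstacle I anticipate is not the algebra but the bookkeeping for the shelling order near the hook: one must verify that the starting facet $F_0$ can actually be reached and that the recursive replacement procedure in Definition~\ref{Shelling order}, when initialised at the hook, never attempts a forbidden swap involving $w$; this requires a careful case analysis of the three configurations in Figure~\ref{Figure: weakly}, analogous to but slightly more delicate than the four points of Discussion~\ref{Discussion}, after which everything else in the theorem follows mechanically from the closed-path proofs.
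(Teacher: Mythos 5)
Your overall strategy matches the paper's --- reduce each part to the closed-path analogue and check the local modification introduced by the hook --- but your key new verification for part (1) is flawed. You claim that, since $w$ is the smallest variable under $\prec_{\cP}$, $w$ does not sit in any inner interval producing a generator of $I_{\cP}$, so the S-polynomial casework of Proposition~\ref{Prop: Grobner basis} applies verbatim. This is a non-sequitur: where $w$ sits in the monomial order has no bearing on whether $w$ is a corner of an inner interval of $\cP$. In fact $w$ is a corner of every cell it touches, so it does appear in generators of $I_{\cP}$, and minimality does not even keep $w$ out of initial monomials (if $f=x_ax_b-x_cx_d$ with $b=w$ and $a$ dominating $c,d$, then $\lt(f)=x_ax_w$). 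The S-pairs involving the hook must therefore be examined directly. The paper's argument does exactly that: it looks at the pairs $(f,g)$ coming from inner intervals containing $A_1$ and $A_n$ respectively --- precisely the pairs with no closed-path analogue --- and observes that $x_1$ divides $\lt(f)$ while $x_1$ is the largest variable, forcing $\gcd(\lt(f),\lt(g))=1$ and hence trivial reduction; all remaining S-pairs then genuinely fall under Proposition~\ref{Prop: Grobner basis}. Without some such coprimality check, your part (1), and everything downstream of it, is incomplete.

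Two further gaps. For part (6) you only treat the zig-zag case via the bijection $\phi$; the paper also runs a case analysis when $\cP$ is prime, splitting into $L$-configurations and ladders (handled as in \cite{Cisto_Navarra_Hilbert_series,Cisto_Navarra_Jahangir}) versus weak $L$-configurations and weak ladders, which are new to the weakly closed setting. For part (7), the K\"onig-type condition of \cite{Def. Konig type} requires exhibiting a specific set of $\operatorname{ht}(I_{\cP})=|\cP|$ generators whose initial monomials form a regular sequence; the paper produces them explicitly as the generators with $\lt=x_ix_{i'}$ for the vertex pair $\{i,i'\}$ assigned to each cell, and it is the pairwise disjointness of these pairs that makes the monomials a regular sequence --- your appeal to ``a regular sequence of initial forms of length $\operatorname{ht}(I_{\cP})$'' skips the choice that makes this true. (Also a small slip: you wrote $\dim K[\cP]=|\cP|$, but $|V(\cP)|-|\cP|=|\cP|+1$ here.)
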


\begin{proof}
	1) Consider two inner intervals $I$ and $J$ of $\cP$ containing $A_1$ and $A_n$, respectively. Let $f$ and $g$ be the generators of $I_\cP$ attached to $I$ and $J$, respectively. Observe that $x_1$ divides $\mathrm{in}(f)$ and $x_1>x_i$ for all $i\in V(\cP)\setminus\{1\}$, so $\mathrm{gcd}(\mathrm{in}(f),\mathrm{in}(g))=1$. all the other cases can be proved as in Proposition \ref{Prop: Grobner basis}.\\
	2) It follows from 1).\\
	3) It can be proved as in Corollary \ref{Coro: radicality + CM closed path no zig-zag}, once we observe that $I_{\cP}$ is a toric ideal from \cite[Theorem 4.7]{Cisto_Navarra_weakly}.\\
	4-5) The arguments provided in Theorem \ref{Thm: shelling} and Corollary \ref{Coro: Cohen-Macaulay} can be used in a similar way for the weakly closed paths.\\
	6) If $\cP$ has no zig-zag walks then $\cP$ contains an $L$-configuration, or a weak $L$-configuration, or a ladder of at least three steps or a weak ladder (see \cite[Proposition 4.5]{Cisto_Navarra_weakly}). If $\cP$ has an $L$-configuration or a ladder of at least three steps, then the claim follows using similar arguments as done in \cite[Sections 3 and 4]{Cisto_Navarra_Hilbert_series} and \cite[Corollary 2.5]{Cisto_Navarra_Jahangir}. If $\cP$ has a weak $L$-configuration or a weak ladder, then we can apply the strategy used in \cite[Section 4]{Cisto_Navarra_Hilbert_series} (see also \cite[remark 5.8]{Cisto_Navarra_Hilbert_series}). When $\cP$ has a zig-zag walk, then we get the desired conclusion as done in Proposition \ref{Prop: h-polynomial for zig-zag}.\\
	7) The conditions in \cite[Definition 1.1]{Def. Konig type} are satisfied taking the monomial order $\prec_{\cP}$ and the generators of $I_{\cP}$ whose initial monomial with respect $\prec_{\cP}$ is given by $x_ix_{i'}$ where $i\in Y_1$ and $i'\in Y_2$ ($Y_1$ and $Y_2$ are defined by using Algorithm \ref{Algorithm: to define Y}), since $\mathrm{ht}(I_{\cP})=\vert \cP\vert$.\\
\end{proof}

\begin{remark}\rm \label{Remark: Final}
 Let $\cP$ be a polyomino. Does there exist a monomial order on $S_{\cP}$ such that the reduced (quadratic) Gr\"obner basis of $I_{\cP}$ consists of the set of the generators of $I_{\cP}$ and the simplicial complex $\Delta(\cP)$ attached to $\cP$ is shellable? This question is affirmed for some classes of polyominoes, such as frame polyominoes \cite{Frame}, grid polyominoes \cite{Dinu_Navarra_grid} and (weakly) closed paths. If this holds for all polyominoes, then the strategy used in this work (as well as in \cite{Dinu_Navarra_grid} and \cite{Frame}), which involves studying the shelling order of the facets of $\Delta(\cP)$ and applying the McMullen-Walkup Theorem (\cite[Corollary 5.1.14]{Bruns_Herzog}), might be useful for addressing \cite[Conjecture 3.2]{Parallelogram Hilbert series} (or its generalization \cite[Conjecture 4.9]{Frame}) or \cite[Conjecture 4.5]{Trento3} for thin polyominoes. This could also imply that $K[\cP]$ is Cohen-Macaulay for every polyomino $\cP$. By combining this result with \cite[Corollary 3.1.17]{Villareal} and \cite[Theorem 1.1]{Moradi}, we could provide a positive answer to \cite[Conjecture 3.7]{Dinu_Navarra_Konig}, which states that $\mathrm{ht}(I_{\cP})=\vert\cP\vert$ for every polyomino $\cP$.\\
 However, it seems that the shelling order and what is termed as \textit{steps} are highly dependent on the polyomino’s shape; for instance, different definitions than Definition \ref{Defn: steps} can be found in \cite[Definition 3.3]{Frame} and \cite[Definition 3.1]{Dinu_Navarra_grid}, depending on the specific polyominoes studied. Therefore, identifying a more general framework or something more general than the so-called steps for describing a shelling order for a simplicial complex attached to a polyomino remains an open question. \\
\end{remark}

\bmhead{Acknowledgements}
The author acknowledges the support of the Scientific and Technological Research Council of Turkey (T\"UB\.ITAK) under Grant No. 122F128 and expresses his gratitude to T\"UB\.ITAK for their generous support. Additionally, he states that he is a member of the GNSAGA group of INDAM and he is grateful for its support.\\
The inspiration for this work arose from the insightful suggestions provided by an anonymous referee regarding \cite[Section 2]{Dinu_Navarra_Konig} during the review of that draft. The author wishes to express his gratitude for the valuable advice offered.\\
He also wishes to thank Ayesha Asloob Qureshi for her insightful and significant discussions and support.\\
This article is dedicated to celebrating the \textit{II Meeting UMI for Doctoral Students}, which took place in Naples on June 13-14, 2024. The author extends his sincere thanksgivings to the Scientific Committee for the opportunity to present his talk titled \textit{A Combinatorial Interpretation of the Primary Decomposition of Binomial Ideals Attached to Polyocollections}.\\

\begin{footnotesize}
\noindent \textbf{Data Availability.} There is no data to be made available.\\
\noindent \textbf{Conflict of interest.} The author states that there is no conflict of interest.	
\end{footnotesize}

\end{document}